\renewcommand{\theenumi}{{\upshape{(\roman{enumi})}}}
\newtheoremstyle{dtheoremnopar}{3 mm}{1 mm}{\itshape}{}{\bfseries}{.}{ }
{\thmname{#1}\thmnumber{ #2}\thmnote{ \mdseries(#3)\bfseries}}
\theoremstyle{dtheoremnopar}
\newcounter{theoremx}
\newtheorem{theoremalpha}[theoremx]{Theorem}
\newcommand{\tref}[1]{\ref{#1}}
\newcommand{\pref}[1]{\eqref{#1}}
\newcommand\Z{\mathbb{Z}}
\newcommand\inj{\hookrightarrow}
\newcommand\surj{\twoheadrightarrow}
\newcommand\iso{\cong} 
\newcommand\map[3]{#1\colon #2\rightarrow #3}
\newcommand\injmap[3]{#1\colon #2\hookrightarrow #3}
\newcommand\id[1]{\mathrm{id}_{#1}} 
\newcommand\SG[1]{{\mathfrak{S}_{#1}}}   
\DeclareMathOperator{\Hom}{Hom}
\newcommand\sep{\mathrm{sep}}
\newcommand\sA{\mathcal{A}}
\newcommand\sF{\mathcal{F}}
\newcommand\sG{\mathcal{G}}
\newcommand\sO{\mathcal{O}}
\renewcommand\P{\mathbb{P}}           
\newcommand\A[1]{\mathbb{A}^{#1}}    
\newcommand\Spec{\mathrm{Spec}}
\newcommand\red{\mathrm{red}}   
\newcommand{\etale}{\'{e}tale}
\newcommand{\Etale}{\'{E}tale}
\newcommand\Hilb{\mathrm{Hilb}}
\newcommand\Mod{\mathbf{Mod}} 
\newcommand\Sch{\mathbf{Sch}}
\newcommand\QCoh{\mathbf{QCoh}} 
\newcommand{\stG}{\mathscr{G}} 
\newcommand{\SEC}{\mathrm{SEC}}
\newcommand{\ET}{\text{\textup{\'ET}}}
\newcommand{\op}{\mathrm{op}}
\newcommand\HilbSt{\mathscr{H}}
\newcommand\etalification{\'etalification}
\newcommand{\devissage}{d\'evissage}
\newcommand{\equalizer}[2]{\xymatrix@1@M=0mm@C=7mm{#1%
\ar@<.5ex>@{+->+}[r] \ar@<-.5ex>@{+->+}[r] & #2}}
\newcommand{\tLiset}{\operatorname{Lis-\acute{e}t}} 
\newcommand{\cart}{\mathrm{cart}} 
\newcommand{\fp}{\mathrm{fp}} 
\newcommand{\metale}{\mathrm{\acute{e}t}} 
\newcommand{\smooth}{\mathrm{sm}} 
\newcommand{\repr}{\mathrm{repr}} 
\newcommand{\qff}{\mathrm{qff}} 
\newcommand{\mopen}{\mathrm{open}} 
\newcommand\isoarrow{\overset{\iso}{\longrightarrow}}
\newcommand\isomap[3]{#1\,:\, #2\isoarrow #3}
\newcommand{\can}{\mathrm{can}} 
\newcommand{\catC}{\mathbf{C}}
\newcommand{\catD}{\mathbf{D}}
\newcommand{\catE}{\mathbf{E}}
\newcommand{\catF}{\mathbf{F}}
\newcommand{\catHom}{\mathbf{Hom}}
\DeclareMathOperator{\ob}{ob} 
\newcommand{\Cat}{\mathbf{Cat}}
\newcommand{\Grpd}{\mathbf{Grpd}}
\newcommand{\Stack}{\mathbf{Stack}}
\newcommand{\tfHom}{\mathbf{Hom}}
\newcommand{\tfQCoh}{\mathbf{QCoh}}
\newcommand{\tfAff}{\mathbf{Aff}}
\newcommand{\tfQAff}{\mathbf{QAff}}
\newcommand{\tfQProj}{\mathbf{QProj}}
\newcommand{\tfStackRep}{\mathbf{Stack}_{\repr}}
\newcommand{\tfF}{\mathbf{F}}
\newcommand\twomap[3]{#1\colon #2\Rightarrow #3}
\begin{document}

\title{\'Etale d\'evissage, descent and pushouts of stacks}
\author{David Rydh}
\address{KTH Royal Institute of Technology, Department of Mathematics,
SE\nobreakdash-100\ 44\ Stockholm, Sweden}
\thanks{Supported by the Swedish Research Council 2008-7143.}
\email{dary@math.kth.se}
\date{2011-01-10}
\subjclass[2000]{Primary 14A20; Secondary 14F20, 18A30, 18F20}
\keywords{\'etale neighborhood, distinguished square, d\'evissage, descent,
pushout, algebraic stack, 2-sheaves}


\begin{abstract}
We show that the pushout of an \etale{} morphism and an open immersion exists
in the category of algebraic stacks and show that such pushouts behave
similarly to the gluing of two open substacks. For example, quasi-coherent
sheaves on the pushout can be described by a simple gluing procedure. We then
outline a powerful \devissage{} method for representable \etale{} morphisms
using such pushouts. We also give a variant of the \devissage{} method
for representable quasi-finite flat morphisms.
%
\end{abstract}

\maketitle


\setcounter{secnumdepth}{0}
\begin{section}{Introduction}
Let $X$ be a scheme and let $X=U\cup V$ be an open cover of $X$. It is
well-known that:
\begin{enumerate}
\item\label{I:open-po:first}\label{I:open-po:descent}
Many objects over $X$ (such as quasi-coherent sheaves) correspond to
objects over $U$ and $V$ with a gluing datum over $U\cap V$. No cocycle
condition is needed as there are no non-trivial triple intersections.
\item The scheme $X$ is the pushout of the open immersions $U\cap V\to U$ and
$U\cap V\to V$.
\item\label{I:open-po:last}
Given two open immersions $W\subseteq U$ and $W\subseteq V$ of schemes
we can glue these to a scheme $X=U\cup_W V$. The scheme $X$ is the pushout
of $W\subseteq U$ and $W\subseteq V$ and we recover $W$ as the intersection
of $U$ and $V$.
\end{enumerate}
In \ref{I:open-po:first}--\ref{I:open-po:last}, we can also replace ``scheme''
by ``algebraic space'' or
``algebraic stack''. The purpose of this paper is to show that in the category
of algebraic spaces or algebraic stacks we can further extend these results,
taking one open immersion and one \emph{\etale{}} morphism instead of two open
immersions. We also outline a powerful \devissage{} method for \etale{}
morphisms
based upon these results as well as an extension to quasi-finite flat morphisms.

The simplest open coverings are of the type $X=U\cup V$ discussed above and
every open covering is a composition of such basic coverings. The \devissage{}
results explain how every \etale{} (resp.\ quasi-finite flat) morphism is built
up from \etale{} neighborhoods and finite \etale{} (resp.\ finite flat)
coverings.

To be able to state our results we need to make ``objects''
in~\ref{I:open-po:descent} above more
precise. Usually this is done in the language of fibered categories and
stacks. However as we need the base category to be the $2$-category $\Stack$ it
is more convenient to replace fibered categories and stacks with
\emph{$2$-functors} and
\emph{$2$-sheaves} as introduced by
R.\ Street~\cite{street_two-dim-sheaf-theory,street_char-bicat-stacks}. The
literature on $2$-sheaves is surprisingly meager and scattered. To not burden
this, essentially geometrical, paper with a long categorical treatment
of $2$-sheaves, we have chosen to give a short comprehensible
introduction in Appendix~\ref{A:2-sheaves} that exactly covers what we need.
Two examples of $2$-sheaves to keep in mind are the $2$-sheaf of quasi-coherent
sheaves $\map{\tfQCoh(-)}{\Stack^{\op}}{\Cat}$ and the $2$-sheaf
$\map{\tfHom(-,Y)}{\Stack^{\op}}{\Grpd}$ for a given algebraic stack $Y$,
cf.\ Appendix~\ref{A:ex-2-sheaves}.

Let $X$ be an algebraic stack and let $Z\subseteq |X|$ be a closed subset. An
\emph{\etale{} neighborhood} of $Z$ is an \etale{} morphism $\map{f}{X'}{X}$
such that the restriction
$\map{f|_{Z_\red}}{f^{-1}(Z_\red)}{Z_\red}$ is an isomorphism.
If $X=U\cup V$ is a union of two
open substacks, then $V\to X$ is an \etale{} neighborhood of $X\setminus U$.
Note that we do not require that $f$ is separated, nor representable, but
\etale{} signifies that $f$ is at least represented by Deligne--Mumford
stacks.
We can now state the main theorems of this paper, generalizing
\ref{I:open-po:first}--\ref{I:open-po:last} in
the beginning of the introduction.

\begin{theoremalpha}[Descent]\label{T:ETNBHD-DESCENT}
Let $X$ be an algebraic stack and let $U\subseteq X$ be an open substack. Let
$\map{f}{X'}{X}$ be an \etale{} neighborhood of $X\setminus U$ and let
$U'=f^{-1}(U)$. Let $\map{\tfF}{\Stack_{\metale/X}^{\op}}{\Cat}$ be a $2$-sheaf
in the
\etale{} topology, cf.\ Appendix~\ref{A:2-sheaves}. Then the natural functor
$$\map{(|_U,f^*)}{\tfF(X)}{\tfF(U)\times_{\tfF(U')} \tfF(X')}$$
is an equivalence of categories.
\end{theoremalpha}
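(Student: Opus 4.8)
\textit{Proof plan.} The pair $\{U\hookrightarrow X,\ f\colon X'\to X\}$ is a covering in $\Stack_{\metale/X}$; write $V=U\sqcup X'$ for the corresponding single covering stack, so that $V\times_X V=U\sqcup U'\sqcup U'\sqcup(X'\times_X X')$ and similarly for the higher products. The geometric heart of the argument is a decomposition of these fibre products. Since $f$ is \etale, the relative diagonal $\Delta_f\colon X'\to X'\times_X X'$ is an open immersion (this needs a word of care as $f$ is only assumed representable by Deligne--Mumford stacks). Put $Z=|X|\setminus|U|$ with its reduced structure; the hypothesis that $f^{-1}(Z_\red)\to Z_\red$ is an isomorphism forces $\Delta(X')$ to contain the entire preimage of $Z$ in $X'\times_X X'$, so its closed complement maps into $U$ and hence lies inside the open substack $U'\times_U U'$ (the preimage of $U$). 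Therefore $X'\times_X X'=\Delta(X')\cup(U'\times_U U')$ is a union of two open substacks whose intersection is $\Delta(U')\iso U'$, i.e.\ $X'\times_X X'$ is the gluing of $X'$ and $U'\times_U U'$ along $U'$. The same reasoning shows that the $(n{+}1)$-fold product of $X'$ over $X$ is the gluing along $U'$ of the full diagonal $X'$ with the preimage $U'\times_U\dots\times_U U'$ of $U$, and that any fibre product over $X$ of copies of $U$ and $X'$ that involves at least one factor $U$ already lies over $U$.

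Granting this, I first prove that $(|_U,f^*)$ is fully faithful. For $\xi,\eta\in\tfF(X)$ the presheaf $W\mapsto\Hom_{\tfF(W)}(\xi|_W,\eta|_W)$ on $\Stack_{\metale/X}$ is a sheaf of sets, because $\tfF$ is a $2$-sheaf (the part of the descent condition concerning morphisms). Applying the sheaf condition to $V\to X$, a morphism $\xi\to\eta$ over $X$ is the same as a pair of morphisms over $U$ and $X'$ whose pullbacks agree on every component of $V\times_X V$; by the decomposition above the only constraint coming from $X'\times_X X'$ is agreement on $\Delta(X')$, which is vacuous, together with agreement on $U'\times_U U'$, which is automatic once the two morphisms agree on $U'$ (both restrictions are then pulled back from $U$ along the two coinciding projections), after one more use of the sheaf property on $X'\times_X X'=\Delta(X')\cup_{U'}(U'\times_U U')$. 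Hence $\Hom_{\tfF(X)}(\xi,\eta)$ is exactly $\Hom_{\tfF(U)}(\xi|_U,\eta|_U)\times_{\Hom_{\tfF(U')}(-,-)}\Hom_{\tfF(X')}(f^*\xi,f^*\eta)$, which is full faithfulness.

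For essential surjectivity, start from an object $(\xi_U,\xi_{X'},\alpha)$ of the fibre product, with $\alpha\colon\xi_U|_{U'}\iso\xi_{X'}|_{U'}$, and assemble a descent datum for $V\to X$. On $V$ take $(\xi_U,\xi_{X'})$; the gluing isomorphism on $V\times_X V$ is $\mathrm{id}_{\xi_U}$ on $U$, the isomorphisms $\alpha$ and $\alpha^{-1}$ on the two copies of $U'$, and on $X'\times_X X'$ the isomorphism obtained by gluing $\mathrm{id}_{\xi_{X'}}$ on $\Delta(X')$ with $\mathrm{pr}_2^*\alpha\circ\mathrm{pr}_1^*\alpha^{-1}$ on $U'\times_U U'$ (using the canonical identification $\mathrm{pr}_1^*\xi_U=\mathrm{pr}_2^*\xi_U$ over $U$); these agree on $\Delta(U')$, so they glue. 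The cocycle condition on $V\times_X V\times_X V$ is checked componentwise: on the full diagonal of $X'\times_X X'\times_X X'$ all three pullbacks of the gluing isomorphism are identities, on $U'\times_U U'\times_U U'$ (and on every other component, each lying over $U$) the condition telescopes to an identity built from $\alpha$ and $\alpha^{-1}$, and the two descriptions match on the overlap, so the equation of morphisms holds on the whole product by the sheaf property. Since $\tfF$ is a $2$-sheaf this descent datum is effective, producing $\xi\in\tfF(X)$ with $(\xi|_U,f^*\xi,\mathrm{can})\iso(\xi_U,\xi_{X'},\alpha)$; a final compatibility check of these identifications with the functor $(|_U,f^*)$ and with morphisms completes the proof.

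The step I expect to be the main obstacle is the bookkeeping in the last two paragraphs: setting up the descent $2$-category for $V\to X$ and matching it, via the decomposition of the fibre products, with the ordinary $2$-fibre product of categories --- in particular verifying that the diagonal component of the gluing isomorphism is forced to be the identity and that the cocycle condition comes for free. A secondary point is to confirm the decomposition lemma in the stated generality, where $f$ is only required to be representable by Deligne--Mumford stacks, so that the openness of $\Delta_f$ (and the behaviour of $\Delta_f$ over $Z_\red$) is properly justified.
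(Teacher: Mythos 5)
The decomposition on which your whole argument rests is the genuine gap. You assume that $\Delta_f\colon X'\to X'\times_X X'$ is an open immersion, so that $X'\times_X X'$ (and likewise the higher fibre products) is a union of two \emph{open} substacks, the diagonal copy of $X'$ and $U'\times_U U'$, glued along $U'$. This is true when $f$ is representable, but the theorem deliberately allows $f$ to be non-representable --- \etale{} here only means represented by Deligne--Mumford stacks --- and then $\Delta_f$ is representable and \etale{} but not a monomorphism. For instance, take $X'=BG\to X$ with $G=X\amalg U\subseteq X\times\Z/2\Z$ as in Examples~\pref{E:bad-etnbhds}(ii): over the locus $Z=X\setminus U$ the diagonal is a finite \etale{} morphism of degree $2$, not an open immersion, so there is no open-cover decomposition of $X'\times_X X'$ and none of the triple products either. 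Consequently every step where you glue morphisms or isomorphisms from the two pieces, and the componentwise verification of the cocycle condition, breaks down in exactly the generality in which the theorem is stated (and which the devissage applications need). What survives in general is only the weaker statement that $h=(j''\amalg\Delta_{X'/X})\colon (U'\times_U U')\amalg X'\to X'\times_X X'$ is \etale{}, \emph{representable} and surjective; this is precisely the paper's key observation, and it is not the ``secondary point'' you defer but the crux.

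Replacing your open-cover gluings by descent along $h$ is not a cosmetic change. Full faithfulness does go through much as you argue: after replacing $X'$ by $X'\amalg U$ one uses the Hom-sheaf condition for the cover and only the \emph{injectivity} of $h^*$ on Hom-sets to compare $\pi_1^*\varphi'$ and $\pi_2^*\varphi'$, since both pull back to $(\pi|_U)^*\varphi_U$ on $U'\times_U U'$ and to $\varphi'$ on $X'$. But for essential surjectivity you cannot simply glue the identity on the diagonal with $\pi_2^*\alpha\circ\pi_1^*\alpha^{-1}$ on $U'\times_U U'$: since $h$ is not a union of monomorphisms, producing the isomorphism $\psi\colon\pi_1^*\sF'\to\pi_2^*\sF'$ requires knowing that the functor $(|_{U'\times_U U'},\Delta_{X'/X}^*)$ on $\tfF(X'\times_X X')$ is fully faithful, i.e.\ the already-proved first half of the theorem applied to $\Delta_f$, which by Proposition~\pref{P:etnbhd-properties}(ii) is an \etale{} neighborhood of $X'\times_X X'\setminus U'\times_U U'$. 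This bootstrap yields $\psi$ with $\psi|_{U'\times_U U'}=\psi_{U'}$ and $\Delta_{X'/X}^*\psi$ the identity (here one uses, as you note, that the cocycle condition for $\psi_{U'}$ forces its restriction to the diagonal $U'$ to be the identity), and the cocycle condition for $\psi$ is then checked after pulling back along the \etale{} cover of $(X'/X)^3$ by $U'\times_U U'\times_U U'$ and the diagonal $X'$, using faithfulness once more. In the representable case your proof essentially coincides with the paper's; in general the decomposition lemma is false and the argument must be restructured as above.
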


\begin{theoremalpha}[\Etale{} neighborhoods are pushouts]\label{T:ETNBHD-IS-PUSHOUT}
Let
$$\xymatrix{U'\ar[r]^{j'}\ar[d]_{f|_U} & X'\ar[d]^f\\
U\ar[r]_j & X\ar@{}[ul]|\square}$$
be a cartesian diagram of algebraic stacks such that $\map{j}{U}{X}$ is an open
immersion and such that $\map{f}{X'}{X}$ is an \etale{} neighborhood of
$X\setminus U$. Then $X$ is the pushout of $f|_U$
and $j'$ in the category of algebraic stacks, that is, the cartesian square
is also co-cartesian.
\end{theoremalpha}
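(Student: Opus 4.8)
The plan is to deduce Theorem~\ref{T:ETNBHD-IS-PUSHOUT} from Theorem~\ref{T:ETNBHD-DESCENT} applied to the representable $2$-sheaves $\tfHom(-,Y)$. Recall that a square is co-cartesian in the category of algebraic stacks precisely when, for every algebraic stack $Y$, the induced square of $\Hom$-categories is cartesian, i.e.\ the functor
$$\Hom(X,Y)\longrightarrow \Hom(U,Y)\times_{\Hom(U',Y)} \Hom(X',Y)$$
is an equivalence; here one should be slightly careful that ``push-out'' in a $2$-category means $2$-categorical push-out, so I would first recall (referring to the appendix on $2$-sheaves) that co-cartesian in the relevant sense is equivalent to this representability statement. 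So the heart of the argument is to produce, for each $Y$, this equivalence, and that is exactly Theorem~\ref{T:ETNBHD-DESCENT} for $\tfF=\tfHom(-,Y)$ — once we know $\tfHom(-,Y)$ restricts to a $2$-sheaf on $\Stack_{\metale/X}$, which is one of the standing examples cited in Appendix~\ref{A:ex-2-sheaves} (it is an \'etale, indeed fppf, $2$-sheaf).

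First I would fix $Y$ and check the hypotheses of Theorem~\ref{T:ETNBHD-DESCENT}: the morphism $f\colon X'\to X$ is an \'etale neighborhood of $X\setminus U$ and $U'=f^{-1}(U)$ by the cartesianness of the given square, and $\tfHom(-,Y)$ is a $2$-sheaf in the \'etale topology. Theorem~\ref{T:ETNBHD-DESCENT} then gives that $(|_U,f^*)\colon\Hom(X,Y)\to\Hom(U,Y)\times_{\Hom(U',Y)}\Hom(X',Y)$ is an equivalence. Next I would translate this back: an object of the target is a pair of morphisms $U\to Y$, $X'\to Y$ together with a $2$-isomorphism of their restrictions to $U'$, which is precisely a cocone on the span $U\xleftarrow{f|_U} U'\xrightarrow{j'} X'$ with vertex $Y$; the equivalence says every such cocone factors essentially uniquely through $X$. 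Unwinding this uniqueness at the level of objects, $1$-morphisms and $2$-morphisms is exactly the universal property that makes the square co-cartesian, so the cartesian square is co-cartesian. Since this holds for all $Y$, $X$ is the $2$-categorical push-out of $f|_U$ and $j'$.

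The one genuine subtlety — and the step I expect to need the most care — is the bookkeeping of $2$-categorical universality: a push-out in a $2$-category is not characterized by a bijection of cocones but by an equivalence of cocone categories, and one must make sure that the equivalence delivered by Theorem~\ref{T:ETNBHD-DESCENT} is exactly that equivalence, compatibly as $Y$ varies (i.e.\ $2$-naturally). I would handle this by phrasing everything through the $2$-sheaf $\tfHom(-,Y)$ and invoking the formalism set up in Appendix~\ref{A:2-sheaves}, so that ``$X$ is the push-out'' is by definition the statement that the contravariant $2$-functor $Y\mapsto\Hom(-,Y)$ sends the span to the fiber product, which is what we have just proved pointwise in $Y$; the $2$-naturality in $Y$ is automatic since all the functors in sight ($|_U$, $f^*$, restriction to $U'$) are induced by the fixed morphisms of stacks and hence commute with precomposition along any $Y\to Y'$. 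A minor additional point is to note that the class $\Stack_{\metale/X}$ is large enough that $f$, $j$, $j'$, $f|_U$ all live in it (they are \'etale, resp.\ open immersions, over $X$), so that Theorem~\ref{T:ETNBHD-DESCENT} genuinely applies; with that observed, no further geometry is needed — the result is a formal consequence of the descent theorem.
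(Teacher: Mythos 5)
Your proposal is correct and is essentially the paper's own argument: the paper proves Theorem~\tref{T:ETNBHD-IS-PUSHOUT} by applying Theorem~\tref{T:ETNBHD-DESCENT} to the $2$-sheaf $\tfHom(-,Z)$ (that this is a $2$-sheaf is the content of Appendix~\ref{A:ex-2-sheaves}), obtaining existence of the factorization $g$ from essential surjectivity and uniqueness up to unique $2$-isomorphism from full faithfulness of $(|_U,f^*)$. The only cosmetic difference is that the paper verifies the $2$-categorical universal property directly for each cocone rather than phrasing it as an equivalence of cocone categories natural in $Y$, which amounts to the same bookkeeping you describe.
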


\begin{theoremalpha}[Existence of pushouts]\label{T:EXIST-PUSHOUTS}
Let $X'$ be an algebraic stack, let $\map{j'}{U'}{X'}$ be an open immersion
and let $\map{f_U}{U'}{U}$ be an \etale{} morphism. Then the pushout $X$ of
$j'$ and $f_U$ exists in the category of algebraic stacks. The resulting
co-cartesian diagram
$$\xymatrix{U'\ar[r]^{j'}\ar[d]_{f_U} & X'\ar[d]^f\\
U\ar[r]_{j} & X}$$
is also cartesian, $j$ is an open immersion and $f$ is an \etale{} neighborhood
of $X\setminus U$. Furthermore,
\begin{enumerate}
\item The formation of the pushout commutes with arbitrary base change.
\item If $f_U$ is representable, then so is $f$.
\item If $j'$ is quasi-compact then so is $j$.
If in addition $X'$ and $U$ are quasi-separated then so is $X$.
\item If $f_U$ is \emph{representable} and $X'$ and $U$ have
separated diagonals, then the diagonal of $X$ is separated.
\item If $X'$ and $U$ are algebraic spaces, then so is $X$.
\end{enumerate}
(also see Proposition~\pref{P:etnbhd-properties} for further properties)
\end{theoremalpha}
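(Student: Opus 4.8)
\noindent The plan is to construct $X$ by hand as the quotient of an \etale{} groupoid and then to deduce its universal property from Theorem~\tref{T:ETNBHD-IS-PUSHOUT}, rather than verifying it directly. First I would reduce to the case where $f_U$ is \emph{surjective}: letting $V\subseteq U$ be the (open) image of the \etale{} morphism $f_U$, form the push-out $W$ of $j'$ and the restricted morphism $U'\to V$, and then set $X:=W\cup_V U$, the ordinary Zariski gluing of the open immersions $V\inj W$ and $V\inj U$. By the pasting lemma for push-outs, $X$ is then the push-out of $j'$ and $f_U$, and all the remaining assertions follow from those for $W$ together with the classical case of a Zariski gluing. (One may, if convenient, also reduce to $X'$ and $U$ being algebraic spaces by a standard argument with compatible simplicial presentations, using that the construction is compatible with base change.)

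\smallskip\noindent\emph{Construction.} Assume $f_U$ surjective. Since $f_U$ is \etale{}, hence unramified, its diagonal $\map{\Delta_{f_U}}{U'}{U'\times_U U'}$ is an open immersion; gluing it to the open immersion $j'$ along $U'$ produces $R:=X'\cup_{U'}(U'\times_U U')$. I would give $R\rightrightarrows X'$ the groupoid structure whose source and target maps are the identity on the summand $X'$ and the two projections on $U'\times_U U'$, whose unit is the canonical open immersion $X'\inj R$, with the evident inverse, and whose composition makes $U'\times_U U'\rightrightarrows U'$ the \v{C}ech groupoid of the cover $f_U$ and is compatible with this under the gluing. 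The groupoid axioms are then a short verification, the only non-formal point being associativity, which reduces to associativity in the \v{C}ech groupoid of $f_U$; and the structure maps are \etale{}, because the two projections $U'\times_U U'\to U'$ are. One sets $X:=[R\rightrightarrows X']$; this is an algebraic stack, since $R\rightrightarrows X'$ is an \etale{} groupoid. When $X'$ and $U$ are algebraic spaces, $R\to X'\times X'$ is a monomorphism (each of the two glued pieces is), so $X$ is then an algebraic space, which gives~(v).

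\smallskip\noindent\emph{Verification.} By construction the atlas $\map{f}{X'}{X}$ is \etale{} and $X'\times_X X'=R$. The open substack $U'\subseteq X'$ is invariant for the groupoid and $[U'\times_U U'\rightrightarrows U']=U$ by \etale{} descent along $f_U$, so $\map{j}{U}{X}$ is an open immersion with $f^{-1}(U)=U'$ and the square is cartesian. Over $Z':=X'\setminus j'(U')$ the groupoid $R$ restricts to the trivial groupoid $\Delta(Z')$, so $f$ is an \etale{} neighborhood of $X\setminus U$. Theorem~\tref{T:ETNBHD-IS-PUSHOUT} then shows that this cartesian square is co-cartesian, i.e.\ that $X$ is the push-out. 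Property~(i) follows because $[R\rightrightarrows X']$ base-changes along any $T\to X$ to the groupoid built from $U'\times_X T\inj X'\times_X T$ and $U'\times_X T\to U\times_X T$ (using that $(U'\times_U U')\times_X T=(U'\times_X T)\times_{U\times_X T}(U'\times_X T)$); and (ii)--(iv) are obtained by tracking representability, quasi-compactness and separatedness through the structure maps and through $R\to X'\times X'$, which is where the hypotheses ``$j'$ quasi-compact'' and ``$f_U$ representable'' are used.

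\smallskip\noindent\emph{Main obstacle.} The crux is getting the groupoid $R$ right. The clean fact behind the construction is that $X'\times_X X'$ is itself a push-out of two open immersions --- namely of $j'$ and of $\Delta_{f_U}$ --- which is exactly why no cocycle condition intervenes; granting this, checking the groupoid axioms and that $f$ is an \etale{} neighborhood is routine but fiddly. The remaining delicate points are the $2$-categorical care needed to form $[R\rightrightarrows X']$ when $X'$ and $U$ are genuine stacks, and the precise bookkeeping behind the finiteness and separatedness statements~(iii)--(v).
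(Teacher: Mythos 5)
Your overall architecture (build $R=X'\times_X X'$ first, take the quotient of a groupoid $\equalizer{R}{X'}$, then get the universal property from Theorem~\tref{T:ETNBHD-IS-PUSHOUT}) is exactly the ``natural approach'' the paper describes at the start of Section~\ref{S:etnbhd-pushouts} and then deliberately abandons for general stacks, and the two places where you wave are precisely where it breaks. First, your construction of $R$ rests on the claim that $\Delta_{f_U}$ is an open immersion because $f_U$ is \etale{}, hence unramified. This is true only when $f_U$ is \emph{representable}: for a non-representable \etale{} morphism such as $U'=U\times B(\Z/2\Z)\to U$ (which the theorem explicitly must allow, cf.\ the discussion after Theorem~\tref{T:EXIST-PUSHOUTS} and Examples~\pref{E:bad-etnbhds}), the diagonal $\Delta_{f_U}$ is a finite \etale{} morphism of degree $2$, not a monomorphism, so $R$ cannot be obtained as a Zariski gluing of two open immersions. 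Producing what should be $X'\times_X X'$ already requires the push-out of an open immersion along an \etale{} morphism one level down in representability; this is why the paper's proof is structured as a bootstrap (isomorphism $\Rightarrow$ monomorphism $\Rightarrow$ representable $\Rightarrow$ arbitrary), constructing $R'$ as the push-out of $j'$ and $\Delta_{U'/U}$ using the previously established case.

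Second, even granting $R$, the step ``$X:=[\equalizer{R}{X'}]$ is an algebraic stack since the groupoid is \etale{}'' is the real crux when $X'$ and $U$ are stacks: $\equalizer{R}{X'}$ is then a groupoid \emph{in stacks} (with non-representable source and target if $f_U$ is not representable), whose quotient is a priori a $2$-stack, and likewise ``$[U'\times_U U'\rightrightarrows U']=U$ by \etale{} descent'' is not a $1$-categorical quotient statement for non-representable $f_U$. You flag this as ``delicate'' but do not resolve it, and the parenthetical reduction to algebraic spaces ``by compatible simplicial presentations'' is both unsubstantiated and partly circular, since compatibility with base change is assertion (i) of the theorem. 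The paper avoids groupoids in stacks altogether by constructing $X$ as an explicit fibered category $\catC$ over $\Sch$: an object over $T$ is an open subset $T_\circ\subseteq T$, a groupoid of algebraic spaces pulled back from a fixed presentation groupoid of $X'$, and a morphism $T_\circ\to U$, subject to the condition that the quotient $T'\to T$ is an \etale{} neighborhood of $T\setminus T_\circ$; it then verifies $X'\times_\catC X'\iso R'$, that $X'\amalg U\to\catC$ is an \etale{} surjection (so $\catC$ is algebraic), and concludes via Theorems~\tref{T:ETNBHD-DESCENT} and~\tref{T:ETNBHD-IS-PUSHOUT}, with (i)--(v) coming from Proposition~\pref{P:etnbhd-properties}. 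In the case where $X'$ and $U$ are algebraic spaces and $f_U$ is representable your argument is essentially the paper's sketch and is fine (modulo a small argument that $R\to X'\times X'$ is a monomorphism, the two pieces overlapping only along $\Delta_{U'}$), but for the stated generality the two gaps above are genuine, the first being an outright false claim.
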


For the applications in mind, e.g., the \devissage{} method, it is useful to
have
Theorem~\tref{T:EXIST-PUSHOUTS} for \etale{} morphisms $f_U$ that are not
representable and in this case $X$ need not have separated diagonal, cf.\
Examples~\pref{E:bad-etnbhds}. It is thus natural to treat algebraic stacks
with non-separated diagonals. On the other hand, the queasy reader is
encouraged to assume that all algebraic stacks are at least quasi-separated,
i.e., have
quasi-compact and quasi-separated diagonals.

We will now state the \etale{} \devissage{} theorem.
Let $S$ be an algebraic stack. We let $\Stack_{\fp,\metale/S}$ denote the
$2$-category of \etale{} and finitely presented morphisms $X\to S$ and let
$\Stack_{\repr,\sep,\fp,\metale/S}$ denote the subcategory of morphisms that
are representable and separated. The second category is equivalent to a
$1$-category.

\begin{theoremalpha}[D\'evissage]\label{T:DEVISSAGE}
Let $S$ be a quasi-compact and quasi-separated algebraic stack and let $\catE$
be either $\Stack_{\fp,\metale/S}$ or $\Stack_{\repr,\sep,\fp,\metale/S}$. Let
$\catD\subseteq \catE$ be a full subcategory such that
{\renewcommand{\theenumi}{{\upshape{(D\arabic{enumi})}}}
\begin{enumerate}
\item if $X\in \catD$ and $(X'\to X)\in \catE$ then $X'\in \catD$,\label{TI:etdev:first}
\item if $X'\in \catD$ and $X'\to X$ is finite, surjective and \etale{},
  then $X\in \catD$, and
\item if $\map{j}{U}{X}$ and $\map{f}{X'}{X}$ are morphisms in $\catE$ such that
  $j$ is an open immersion and $f$ is an \etale{} neighborhood of $X\setminus
  U$, then $X\in \catD$ if $U,X'\in \catD$.\label{TI:etdev:last}
\end{enumerate}}
Then if $(X'\to X)\in \catE$ is representable and surjective and $X'\in
\catD$, we have that $X\in \catD$. In particular, if there exists a
representable and surjective morphism $X\to S$ in $\catE$ with $X\in \catD$
then $\catD=\catE$.
\end{theoremalpha}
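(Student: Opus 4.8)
The plan is to reduce the general statement to an iterated application of the three closure axioms \eqref{TI:etdev:first}--\eqref{TI:etdev:last}, using the structure theory of representable quasi-finite \etale{} morphisms over a quasi-compact quasi-separated base. First I would reduce to the case where $X$ is quasi-compact and quasi-separated: since $X\to S$ is finitely presented and $S$ is quasi-compact and quasi-separated, $X$ is already quasi-compact and quasi-separated, so there is nothing to do, but this is where those hypotheses on $S$ get used. Next, since $X'\to X$ is representable, \etale{}, finitely presented and surjective, I would pass to a quasi-compact open and argue that $X'\to X$ may be assumed quasi-compact, hence quasi-finite; the fibres then have bounded degree $\le n$ for some $n$, and I would induct on $n$.

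The heart of the argument is the standard dévissage for quasi-finite separated morphisms via \etale{} neighborhoods. The key step: given a representable surjective quasi-finite \etale{} $f\colon X'\to X$ of degree $\le n$, there is a (possibly non-separated) factorization produced by passing to the "degree exactly $n$" locus. Concretely, let $U\subseteq X$ be the open substack over which $f$ has degree exactly $n$ (equivalently, where $f$ is finite \etale{} of rank $n$ after base change, using that $f$ is \etale{} so the number of geometric points is locally constant on $X'$ and upper semi-continuous downstairs). Over $U$, the pullback $X'_U\to U$ is finite \etale{} surjective of constant degree $n$, so by \eqref{TI:etdev:first} (applied to $X'_U\to X'$, which lands in $\catE$) we get $X'_U\in\catD$, and then by \eqref{TI:etdev:last}... no: rather, $X'_U\to U$ is finite \'etale surjective, so by (D2), $U\in\catD$. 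For the complement, let $Z=X\setminus U$, the closed locus where the degree drops to $\le n-1$. I would build an \etale{} neighborhood $g\colon X''\to X$ of $Z$ by taking the open substack of $X'$ consisting of points that are "alone in their fibre" over $Z$ — more precisely, $X'$ minus the closed locus where the diagonal $X'\to X'\times_X X'$ fails to be an isomorphism along $Z$. This $X''$ is open in $X'$, hence in $\catD$ by \eqref{TI:etdev:first}, and $X''\to X$ is \etale{}; by construction it is an isomorphism over a neighborhood of $Z_\red$, i.e. an \etale{} neighborhood of $Z$ (here one uses that $f$ is representable so its diagonal is an immersion, and quasi-finiteness so over each point of $Z$ some geometric point of the fibre has trivial automorphism/is isolated). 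Now $g^{-1}(U)\to U$ is in $\catE$ with source open in $X''\in\catD$, so $g^{-1}(U)\in\catD$, and combined with $U\in\catD$ and $X''\in\catD$, axiom \eqref{TI:etdev:last} applied to the pair $(U\hookrightarrow X,\ X''\to X)$ yields $X\in\catD$.

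To make the induction close, I would instead arrange that the pullback of the original $X'$ over each of the pieces $U$ and $X''$ still has a section or reduced degree, so that the inductive hypothesis applies: over $U$, $X'_U\to U$ is finite \etale{} surjective giving $U\in\catD$ directly via (D2); over $X''$, the degree-$n$ locus of $f$ has been "spread out" so that $f_{X''}\colon X'\times_X X''\to X''$ acquires a section, hence $X'\times_X X''\to X''$ is a representable surjective \etale{} finitely presented morphism with a section, and splitting off that section reduces its degree to $\le n-1$, so the inductive hypothesis gives $X''\in\catD$ — but $X''$ is already in $\catD$ as an open of $X'$, so what we actually need is $U\in\catD$, which we have. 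The last sentence of the theorem is immediate: take $U'\to U$ to be $X\to S$ itself, apply the first part to conclude $S\in\catD$, then for arbitrary $(Y\to S)\in\catE$ use \eqref{TI:etdev:first} to get $Y\in\catD$.

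The main obstacle I anticipate is the construction and verification of the \etale{} neighborhood $X''$ of $Z=X\setminus U$ in the non-representable or non-separated setting, and checking that $g^{-1}(U)$, $U$, $X''$ genuinely satisfy the hypotheses of \eqref{TI:etdev:last} (in particular that $g$ is finitely presented and that the square $g^{-1}(U)\to X''$, $U\to X$ is set up correctly). Ensuring the degree bound is preserved under the base changes, and that "degree exactly $n$" cuts out an open substack when $f$ is merely representable \etale{} (not a priori finite), requires the quasi-compactness reductions above; this bookkeeping, rather than any single hard idea, is the crux.
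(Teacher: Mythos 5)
The overall shape of your plan (stratify by fibre degree, handle the deepest stratum by (D1)+(D2), then glue with (D3) via an \etale{} neighborhood of the complementary locus) is the paper's, and your treatment of the top stratum $U$ is fine. But the heart of your argument has a genuine gap: you cannot, in general, produce the \etale{} neighborhood $X''$ of $Z=X\setminus U$ as an open substack of $X'$. An \etale{} neighborhood of $Z$ must be an isomorphism over $Z_\red$, whereas any open $X''\subseteq X'$ meets $f^{-1}(Z)$ in an open subspace of the finite \etale{} cover $f^{-1}(Z)\to Z$, whose rank over $Z$ may be $\geq 2$ and which may have monodromy. For instance, take $X=\mathbb{G}_m\times\A{1}$, $Z=\mathbb{G}_m\times\{0\}$, and $X'$ the disjoint union of the connected double cover of $X$ (squaring on the first factor) and of $U=X\setminus Z$: then $f^{-1}(Z)\to Z$ is a connected degree-two cover, so no subspace of $X'$ maps isomorphically to $Z$; in particular your locus of points ``alone in their fibre over $Z$'' is empty over $Z$, not an \etale{} neighborhood. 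This is exactly where the paper introduces its key construction, Lemma~\pref{L:ET-neighborhoods}: the stacks $\SEC^d_Z(X'/X)$ and $\ET^d_Z(X'/X)=[\SEC^d_Z(X'/X)/\SG{d}]$. Over the stratum where $f$ is finite \etale{} of rank $d$, $\ET^d_Z(X'/X)\to X$ is an \etale{} neighborhood of $Z$, it admits the finite \etale{} surjection $\SEC^d_Z(X'/X)\to\ET^d_Z(X'/X)$ of rank $d!$, and $\SEC^d_Z(X'/X)$ maps \etale{} to $X'$; so (D1) gives $\SEC\in\catD$, (D2) gives $\ET\in\catD$, and then (D3) applies. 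Note that this is the only place (D2) is really needed, whereas in your plan (D2) only ever acts on the top stratum --- a symptom that the induction cannot close as written.

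Two further points. When $\catE=\Stack_{\fp,\metale/S}$, the representable morphism $f$ need not be separated, and then the fibre degree is not semi-continuous in the way you need (your claim is also stated backwards: for separated quasi-compact \etale{} morphisms the rank is lower semi-continuous, upper semi-continuity holds in the universally closed case), so ``degree exactly $n$'' need not be open and $f$ need not be finite over it. The paper instead invokes Proposition~\pref{P:filtration-of-constructible} to get a (non-canonical) filtration of $X$ by quasi-compact opens on whose successive differences $f$ is finite \etale{} of constant rank, and inducts along that filtration; in the non-separated case one must use $\ET^{d_i}_{Z_i}$, which may be non-representable over $X_i$ --- precisely why the non-representable category $\catE$ appears in the statement. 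Once the filtration and the $\SEC/\ET$ mechanism are in place, your final induction-on-degree bookkeeping (splitting off sections) is unnecessary.
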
  

Theorem~\tref{T:DEVISSAGE} is generalized to quasi-finite flat morphisms
in Section~\ref{S:qff-devissage}.
Let us explain how Theorem~\tref{T:DEVISSAGE} usually is applied. Suppose that
we want to prove a statement $P(S)$ for an algebraic stack $S$ and that we know
that the corresponding statement $P(S')$ is true for some $S'$ where $S'\to S$
is representable, \etale{} and surjective. A typical situation is when $S$ is a
Deligne--Mumford stack and $S'\to S$ is a presentation. We let $\catD$ be the
subcategory of $\catE=\Stack_{\fp,\metale/S}$ of stacks $X\to S$ such that
$P(X)$ holds. It is then enough to verify
conditions~\ref{TI:etdev:first}--\ref{TI:etdev:last} for $\catD$ to deduce
that $P(S)$ holds. If $S'\to S$ is also separated, then we can work in the
smaller category $\Stack_{\repr,\sep,\fp,\metale/S}$ but if we do not assume
that $S'\to S$ is separated we have to include non-representable morphisms
even though $S'\to S$ is representable.

For \emph{algebraic spaces},
Theorems~\tref{T:ETNBHD-DESCENT}--\tref{T:EXIST-PUSHOUTS} are almost folklore.
Parts of
them or other closely related results appear in~\cite[\S5.7]{raynaud-gruson},\
\cite[\S4]{ferrand-raynaud_fibres-formelles},\ 
\cite[Thm.~2.6]{artin_alg_formal_moduli_II},
\cite[\S6.2]{raynaud-bosch-lutkebohmert_Neron-models},\ 
\cite{beauville-laszlo_lemme-de-descente},\ 
\cite{moret-bailly_un-probleme-de-descente}
and~\cite[\S3.1]{conrad-lieblich-olsson_Nagata}. The first aim of this paper is
to state and prove Theorems~\tref{T:ETNBHD-DESCENT}--\tref{T:EXIST-PUSHOUTS}
for \emph{algebraic stacks}, a highly
non-trivial task compared to the case with algebraic spaces. The second aim is
Theorem~\tref{T:DEVISSAGE} that explains and generalizes the \devissage{} method
that is implicit in~\cite[\S5.7]{raynaud-gruson}.

The \devissage{} method can be used to prove certain existence results that can
be
shown \etale{}-locally. This includes Raynaud--Gruson's flatification by
blow-ups~\cite{raynaud-gruson}, tame \etalification{} by
stacky blow-ups and compactifications of tame Deligne--Mumford
stacks~\cite{rydh_compactification-tame-stacks} and the
existence of absolute noetherian approximation of
stacks~\cite{rydh_noetherian-approx}. We also expect the \devissage{} method
to be useful in applications of a completely different flavor.

\Etale{} neighborhoods are frequently used in $K$-theory, motives and
$\A{1}$-homotopy theory. In this context they are known as \emph{elementary
  distinguished squares} or \emph{upper distinguished
  squares}, cf.\ \cite{morel-voevodsky_A1-homotopy-theory,voevodsky_homotopy-cd-topologies,voevodsky_homotopy-nis-cdh} and Remark~\pref{R:Nisnevich}.

\begin{subsection}{Outline}
In Section~\ref{S:etnbhd-descent} we prove Theorems~\tref{T:ETNBHD-DESCENT}
and~\tref{T:ETNBHD-IS-PUSHOUT}. In Section~\ref{S:etnbhds} we describe some
general properties of \etale{} neighborhoods and in
Section~\ref{S:etnbhd-pushouts} we give a proof of
Theorem~\tref{T:EXIST-PUSHOUTS}.
In Section~\ref{S:constructible} we show that every constructible sheaf is
locally constant on the stratification induced by an open filtration.
Equivalently, a representable \etale{} morphisms of finite presentation becomes
finite \etale{} after passing to such a stratification.
In Section~\ref{S:etale-devissage} we prove Theorem~\tref{T:DEVISSAGE} and
in Section~\ref{S:qff-devissage} we prove a more general \devissage{} result for
quasi-finite and flat morphisms. In Section~\ref{S:qf-stacks} we show that
every stack with quasi-finite diagonal has a quasi-finite presentation and
give an \etale{}-local structure theorem for such stacks.

In Appendix~\ref{A:separation} we state our conventions for algebraic stacks
and give some technical results on separation axioms for stacks.
In Appendix~\ref{A:alg-points} we show that points on quasi-separated stacks
are algebraic.
In Appendix~\ref{A:lemmas} we give two lemmas for algebraic spaces and
in Appendices~\ref{A:2-sheaves} and~\ref{A:ex-2-sheaves} we give a short
introduction to $2$-sheaves on stacks.

A morphism of stacks $\map{f}{X}{Y}$ is \emph{\etale} if and only if $f$ is
locally of finite presentation, flat and has \etale{} diagonal, cf.\
\cite[App.~B]{rydh_embeddings-of-unramified}.
\end{subsection}

\begin{subsection}{Acknowledgments}
I would like to thank L.\ Moret-Bailly, M.\ Olsson, M.\ Shulman, R.\ Skjelnes
and M.\ Temkin for
useful comments and discussions.
\end{subsection}

\end{section}
\setcounter{secnumdepth}{3}

\setcounter{tocdepth}{1} 
\tableofcontents


\begin{section}{Descent for \etale{} neighborhoods}\label{S:etnbhd-descent}
In this section we prove Theorems~\tref{T:ETNBHD-DESCENT}
and~\tref{T:ETNBHD-IS-PUSHOUT}.
Recall that if $X$ is an algebraic stack and $X=U_1\cup U_2$ is an open
covering, so that $U_2\to X$ is an open neighborhood of $X\setminus U_1$,
then a quasi-coherent sheaf on $X$ can be described as a pair of quasi-coherent
sheaves
$\sF_1\in\QCoh(U_1)$ and $\sF_2\in\QCoh(U_2)$ together with an isomorphism
$\sF_1|_{U_1\cap U_2}\to \sF_2|_{U_1\cap U_2}$.

The following notation will be fixed throughout this section.

\begin{notation}\label{N:etnbhd-functor}
Let $X$ be an algebraic stack and let $Z\inj X$ be a closed substack.  Let
$\map{f}{X'}{X}$ be an \etale{} neighborhood of $|Z|$, let $U=X\setminus Z$ and
let $U'=X'\setminus Z=f^{-1}(U)$. Let
$\map{\tfF}{\Stack_{\metale/X}^{\op}}{\Cat}$ be
a $2$-presheaf, i.e., a (pseudo) $2$-functor (cf.\ Appendix~\ref{A:2-sheaves}).
We have pull-back functors
\begin{align*}
(f|_U)^*&\colon \tfF(U)\rightarrow \tfF(U'), \\
|_{U'}&\colon \tfF(X')\rightarrow \tfF(U'),
\end{align*}
we can form the $2$-fiber product
$$\tfF(U)\times_{\tfF(U')} \tfF(X'),$$
and there is an induced functor
$$\map{(|_U,f^*)}{\tfF(X)}{\tfF(U)\times_{\tfF(U')} \tfF(X')}$$
that is unique up to unique natural isomorphism.
\end{notation}

Under the assumption that $\tfF$ is a $2$-sheaf in the \etale{} topology, we
will show that the functor $(|_U,f^*)$ is an equivalence of categories. This is
Theorem~\tref{T:ETNBHD-DESCENT}.
Examples of $2$-sheaves include $\tfQCoh$ and $\tfHom(-,Y)$,
cf.\ Appendix~\ref{A:ex-2-sheaves}.

\begin{example}
Let $\tfF=\tfQCoh$ be the $2$-functor of quasi-coherent sheaves. Then
$$\QCoh(U)\times_{\QCoh(U')} \QCoh(X')$$
is the category of quasi-coherent sheaves on $U$ with a specified extension to
$X'$. More formally, the objects are triples $(\sF_U,\theta,\sF')$ where
$\sF_U\in \QCoh(U)$, $\sF'\in \QCoh(X')$ and
$\map{\theta}{(f|_U)^*{\sF_U}}{\sF'|_{U'}}$ is an isomorphism. The morphisms
are pairs $\map{(\varphi_U,\varphi')}{(\sF_U,\theta,\sF')}{(\sG_U,\psi,\sG')}$
where $\map{\varphi_U}{\sF_U}{\sG_U}$ and $\map{\varphi'}{\sF'}{\sG'}$ are
homomorphisms such that $\varphi'|_{U'}\circ
\theta=\psi\circ(f|_U)^*\varphi_U$.
\end{example}

\begin{proof}[Proof of Theorem~\tref{T:ETNBHD-DESCENT}]
Let $\map{\pi_1,\pi_2}{X'\times_X X'}{X'}$ be the two projections, let
$\map{\Delta_{X'/X}}{X'}{X'\times_X X'}$ be the diagonal and let
$\map{\pi}{X'\times_X X'}{X}$ denote the structure morphism. The key
observation is that the assumptions on $f$ imply that
$$\map{h=(j''\amalg
\Delta_{X'/X})}{(U'\times_U U')\amalg X'}{X'\times_X X'}$$
is \etale{}, representable and surjective. Here $\map{j''}{U'\times_U
U'}{X'\times_X X'}$ denotes the canonical open immersion.

We first show that the functor $(|_U,f^*)$ is fully faithful. Let $\sF,\sG\in
\tfF(X)$ be two objects. Replacing $X'$ with $X'\amalg U$ we can assume that
$f$ is surjective. As $f$ is a morphism of descent, the sequence
$$\xymatrix{
\Hom(\sF,\sG)\ar[r]
 & \Hom(\sF',\sG')\ar@<.5ex>[r]^-{\pi_1^*}\ar@<-.5ex>[r]_-{\pi_2^*}
 & \Hom(\sF'',\sG'')
}$$
is exact where $\sF'=f^*\sF$, $\sF''=\pi^*\sF$ etc. As $h$ is \etale{} and
surjective, the map
$$\xymatrix{
\Hom(\sF'',\sG'')\ar[r]^-{h^*} & \Hom(h^*\sF'',h^*\sG'')
}$$
is injective. Given compatible morphisms $\map{\varphi_U}{\sF|_U}{\sG|_U}$ and
$\map{\varphi'}{\sF'}{\sG'}$, we have that
$h^*\pi_1^*\varphi'=h^*\pi_2^*\varphi'$ since both morphisms coincide with
$(\pi|_U)^*\varphi_U$ on the first component $U'\times_U U'$ and with
$\varphi'$ on the second component $X'$. Thus, by descent, there is a unique
morphism $\map{\varphi}{\sF}{\sG}$ such that $\varphi'=f^*\varphi$ and
$\varphi_U=\varphi|_U$.

Next, we show that the functor is essentially surjective. Let
$\sF_U\in\tfF(U)$ and $\sF'\in\tfF(X')$ be objects together with an
isomorphism $\map{\theta}{(f|_U)^*{\sF_U}}{\sF'|_{U'}}$. The isomorphism
$\theta$ provides $\sF'|_{U'}$ with a descent datum, i.e., an isomorphism
$\map{\psi_{U'}}{(\pi_1^*\sF')|_{U'\times_U U'}}{(\pi_2^*\sF')|_{U'\times_U
U'}}$ satisfying the cocycle condition over $U'\times_U U'\times_U U'$. The
cocycle condition implies that $(\Delta_{U'/U})^*\psi_{U'}$ is the identity on
$\sF'|_{U'}$.

As we have seen, the functor
$\map{h^*=(j''^*,(\Delta_{X'/X})^*)}{\tfF(X'\times_X X')}{\tfF(U'\times_U
U')\times_{\tfF(U')} \tfF(X')}$ is fully faithful so the isomorphism
${(\psi_{U'},\id{X'})}$ descends to a unique isomorphism
$\map{\psi}{\pi_1^*\sF'}{\pi_2^*\sF'}$ such that $\psi|_{U'\times_U
U'}=\psi_{U'}$ and $(\Delta_{X'/X})^*\psi=\id{X'}$. Finally, $\psi$ satisfies
the cocycle condition since $X'\times_X X'\times_X X'$ has an
\etale{} cover consisting of the open substack $U'\times_U U'\times_U U'$ and
the diagonal $X'\to X'\times_X X'\times_X X'$. By effective descent, we obtain
an object $\sF\in\tfF(X)$ that restricts to $\sF_U$ and $\sF'$.
\end{proof}

\begin{remark}
Let $Z_1\to X$ and $Z_2\to X$ be morphisms of algebraic
stacks. Theorem~\tref{T:ETNBHD-DESCENT} applied to the $2$-sheaf
$\tfF=\tfHom_X(Z_1\times_X -,Z_2)$ shows that a morphism
$\map{\varphi_U}{Z_1|_U}{Z_2|_U}$ which extends to a morphism
$\map{\varphi'}{Z_1'}{Z_2'}$ descends to a morphism $\map{\varphi}{Z_1}{Z_2}$
that is unique up to unique $2$-isomorphism. It can also be shown that a stack
$Z_U$ over $U$ extending to a stack $Z'$ over $X'$ glues to a stack $Z$
over $X$ that is unique up to unique $2$-isomorphism, cf.\
Corollary~\pref{C:effective-etnbhd-descent-of-stacks}.

A natural way to formalize these two results is to let $\tfF$ be the ``fibered
$2$-category of stacks'' so that $\tfF(X)$ is the $2$-category of stacks over
$X$. The results are then equivalent to the statement that the $2$-functor
\[
\map{(|_U,f^*)}{\tfF(X)}{\tfF(U)\times_{\tfF(U')} \tfF(X')}
\]
is a
$2$-equivalence of $2$-categories.
The proof is straight-forward except that one has
to deal with unpleasant objects such as functors of tricategories or
fibered $2$-categories of stacks. The canonical descent datum in this setting
consists of a $1$-isomorphism over $(X'/X)^2=X'\times_X X'$ and a
$2$-isomorphism over $(X'/X)^3$ satisfying a cocycle condition over
$(X'/X)^4$. All these technical issues can be completely avoided using
Theorem~\tref{T:EXIST-PUSHOUTS} as is done in
Corollary~\pref{C:effective-etnbhd-descent-of-stacks}.
\end{remark}

\begin{proof}[Proof of Theorem~\tref{T:ETNBHD-IS-PUSHOUT}]
Let $W$ be an algebraic stack and let
$$\xymatrix{U'\ar[r]^{j'}\ar[d]_{f|_U}\drtwocell<\omit>{^\varphi} & X'\ar[d]^{g'}\\
U\ar[r]_{g_U} & W}$$
be $2$-commutative. We have to show that there is a morphism $\map{g}{X}{W}$
and a $2$-commutative diagram
\begin{equation}\label{E:pushout-universal-property}
\vcenter{\xymatrix{U'\ar[r]\ar[d]\drtwocell<\omit>{^\tau} & X'\ar[d]_f\ar@/^/[rdd]^{g'}
\ddrtwocell<\omit>{^<1>\eta'} \\
U\ar[r]_j\ar@/_/[rrd]_{g_U}\drrtwocell<\omit>{<-1>\;\;\;\eta_U}
 &  X\ar[rd]_g\\
&& W}}
\end{equation}
such that the pasting of the diagram is $\varphi$.

By Theorem~\pref{T:Hom-is-2-sheaf}, $\map{\tfF=\tfHom(-,W)}{\Stack^\op}{\Grpd}$
is a $2$-sheaf. By
Theorem~\tref{T:ETNBHD-DESCENT} the object
$$(g_U,\varphi,g')\in \tfF(U)\times_{\tfF(U')} \tfF(X')$$
descends to an object $g\in \tfF(X)=\tfHom(X,W)$ together with a $2$-morphism
$\twomap{\eta}{(g\circ j,\tau,g\circ f)}{(g_U,\varphi,g')}$, i.e., we
have two $2$-morphisms $\twomap{\eta_U}{g\circ j}{g_U}$ and
$\twomap{\eta'}{g\circ f}{g'}$ such that the pasting of
diagram~\eqref{E:pushout-universal-property} is $\varphi$.

Moreover, as $(|_U,f^{*})$ is fully faithful, any two solutions $(g,\eta)$ and
$(\widetilde{g},\widetilde{\eta})$ are uniquely $2$-isomorphic. Specifically,
there is a unique $2$-isomorphism $\twomap{\psi}{g}{\widetilde{g}}$ such that
%
$$\xymatrix{
U\ar[r]\rrlowertwocell<-12>_{g_U}{\;\;\widetilde{\eta}_U}
 & X\rtwocell<2.5>^{g}_{\widetilde{g}}{\psi} & W} \quad = \quad
\xymatrix{
U\ar[r]\rrlowertwocell<-9>_{g_U}{\;\;\eta_U}
 & X\ar[r]^{g} & W}$$
and such that the analogous identity involving $\eta'$ and $\widetilde{\eta}'$
holds.
\end{proof}

\begin{remark}
The special case of Theorem~\tref{T:ETNBHD-DESCENT} when $X'$ and $X$ are
spectra of DVRs can be found
in~\cite[6.2, C]{raynaud-bosch-lutkebohmert_Neron-models}.

Theorems~\tref{T:ETNBHD-DESCENT} and~\tref{T:ETNBHD-IS-PUSHOUT} immediately
generalize to \emph{flat and finitely presented} neighborhoods. Indeed, if
$X'\to X$ is flat and finitely presented and an isomorphism over a closed
substack $Z\inj X$, then $X'\to X$ is \etale{} in an open
neighborhood of $Z\inj X'$. On the other hand, the straight-forward
generalization of Theorem~\tref{T:EXIST-PUSHOUTS} to the flat and finitely
presented case does not hold.

A more interesting generalization is when $X'\to X$ is a flat and quasi-compact
neighborhood, e.g., the completion along a closed subscheme $Z\inj X$. The
case when $X'\to X$ is an affine and flat neighborhood is treated in
\cite[\S4]{ferrand-raynaud_fibres-formelles},\ 
\cite{moret-bailly_un-probleme-de-descente},\ 
\cite[Thm.~2.6]{artin_alg_formal_moduli_II} (for completion),\ 
\cite{beauville-laszlo_lemme-de-descente} (for completion along a hypersurface)
and~\cite[6.2, D]{raynaud-bosch-lutkebohmert_Neron-models} (for DVRs). The
flat analogue of Theorem~\tref{T:ETNBHD-DESCENT} is known for:
\begin{enumerate}
\item The $2$-sheaves $\tfQCoh$, $\tfAff$, $\tfQAff$, $\tfQProj$ of
  quasi-coherent sheaves, affine morphisms, quasi-affine morphisms
  and morphisms equipped with an ample line
  bundle, cf.\ \cite[Prop.~4.2]{ferrand-raynaud_fibres-formelles}
  and~\cite[Thm.~1.1]{moret-bailly_un-probleme-de-descente}.
\item The $2$-sheaf $\tfHom(-,Y)$ for an algebraic stack $Y$ with quasi-affine
  diagonal~\cite[Cor.~6.5.1]{moret-bailly_un-probleme-de-descente}.
\end{enumerate}
In particular, the flat variant of Theorem~\tref{T:ETNBHD-IS-PUSHOUT} holds
in the category of algebraic stacks with quasi-affine diagonal. The restriction
to quasi-affine diagonal is needed to ensure that the stacks are also stacks
in the fpqc-topology~\cite[Cor.~10.7]{laumon}.
\end{remark}

\end{section}


\begin{section}{\Etale{} neighborhoods}\label{S:etnbhds}
Let $X$ be an algebraic stack, let $Z\subseteq |X|$ be a closed subset and let
$U=X\setminus Z$. Let $\map{f}{X'}{X}$ be an \etale{} neighborhood of $Z$. In
this section we study how properties of $X$ and $f$ are related to properties
of $U$, $X'$ and $f|_U$, e.g., $f$ is representable if and only if $f|_U$ is
representable. We begin by showing that the notion of being an \etale{}
neighborhood of $Z$ is set-theoretic and does not depend on the choice of a
substack structure on $Z$.

\begin{lemma}\label{L:etnbhd-chars}
Let $\map{f}{X'}{X}$ be an \etale{} morphism of algebraic stacks and let
$Z\subseteq |X|$ be a closed subset. The following are equivalent
\begin{enumerate}
\item\label{LI:etnbhd:arb}
For every morphism $\map{g}{T}{X}$ such that $g(T)\subseteq Z$, the
projection $X'\times_X T\to T$ is an isomorphism.
\item\label{LI:etnbhd:red}
The projection $X'\times_X Z_\red\to Z_\red$ is an isomorphism, i.e.,
$f$ is an \etale{} neighborhood of $Z$.
\item\label{LI:etnbhd:point}
For every field $k$ and point $\map{x}{\Spec k}{X}$ in $Z$,
the fiber $X'_x\to \Spec k$ is an isomorphism.
\end{enumerate}
\end{lemma}
\begin{proof}
Clearly \ref{LI:etnbhd:arb}$\implies$\ref{LI:etnbhd:red}$\implies$\ref{LI:etnbhd:point}.
That \ref{LI:etnbhd:point}$\implies$\ref{LI:etnbhd:arb} follows
immediately from the following two facts.
%
A morphism which is locally of finite type and such that every fiber is
an isomorphism is a surjective monomorphism~\cite[Prop.~17.2.6]{egaIV}.
A surjective \etale{} monomorphism is an
isomorphism~\cite[Thm.~17.9.1]{egaIV}.
%
\end{proof}

\begin{xpar}[Inertia stacks]
Let $\map{f}{X}{Y}$ be a morphism of stacks. Then there is an induced morphism
of inertia stacks $\map{I_f}{I_X}{I_Y}$. The morphism $I_f$ is a composition
$I_{X}\to I_Y\times_Y X\to I_Y$ where the first morphism is a pull-back of
$\Delta_f$ and the second morphism is a pull-back of $f$. In particular, if $f$
is \etale{} (resp.\ an open immersion) then so is $I_f$.
\end{xpar}

\begin{xpar}
Given a cartesian diagram of stacks
$$\xymatrix{%
U'\ar[r]^{j'}\ar[d]_{f_U} & X'\ar[d]^f\\
U\ar[r]_j & X\ar@{}[ul]|\square}$$
we have the following cartesian diagram of stacks
$$\vcenter{\xymatrix{%
U'\ar[r]^{j'}\ar[d]_{\Delta_{f_U}} & X'\ar[d]^{\Delta_f}\\
U'\times_U U'\ar[r]_-{j'\times j'} & X'\times_X X'\ar@{}[ul]|\square}}
\quad\text{and}\quad
\vcenter{\xymatrix{%
I_{U'}\ar[r]^{I_{j'}}\ar[d]_{I_{f_U}} & I_{X'}\ar[d]^{I_f}\\
I_{U}\ar[r]_{I_{j}} & I_{X}.\ar@{}[ul]|\square}}$$
\end{xpar}

\begin{proposition}\label{P:etnbhd-properties}
Let $\map{j}{U}{X}$ be an open immersion of stacks and let $\map{f}{X'}{X}$ be
an \etale{} neighborhood of $X\setminus U$. Let $\map{j'}{U'}{X'}$ be the
pull-back of $j$ along $f$. Then
\begin{enumerate}
\item $\map{f\amalg j}{X'\amalg U}{X}$ is \etale{} and surjective.
\label{PI:etnbhd:etsurj}
\item $\Delta_f$ is an \etale{} neighborhood of
$X'\times_X X'\setminus U'\times_U U'$.
\label{PI:etnbhd:nbhd-diagonal}
\item $I_f$ is an \etale{} neighborhood of $I_X\setminus I_U$.
\label{PI:etnbhd:nbhd-inertia}
\item $I_{X'}\to I_{X}\times_X X'$ is an \etale{} neighborhood of
$I_{X}\times_X X'\setminus I_{U}\times_U U'$.
\label{PI:etnbhd:nbhd-inertia2}
\item If $j'$ is quasi-compact then so is $j$.\label{PI:etnbhd:j-qc}
\item If $f|_U$ is an open immersion (resp.\ a quasi-compact open
immersion, resp.\ an open and closed immersion, resp.\ an isomorphism,
resp.\ surjective) then so is
$f$.\label{PI:etnbhd:open/clopen/iso/surj/qcopen}
\item If $f|_U$ is representable (resp.\ representable and quasi-separated,
resp.\ representable and separated) then so
is~$f$.\label{PI:etnbhd:repr/repr+sep}
\item If $j'$ is quasi-compact and
$f|_U$ is quasi-compact (resp.\ quasi-separated, resp.\ of finite presentation)
then $f$ is quasi-compact (resp.\ quasi-separated, resp.\ of finite
presentation).\label{PI:etnbhd:qc/qs}
\item If $U$ and $X'$ are algebraic spaces, then so is $X$.
\label{PI:etnbhd:alg-space}
\item If $j'$ is quasi-compact and $U$ and $X'$ are
quasi-separated, then $X$ is quasi-separated.\label{PI:etnbhd:qsep}
\item If $f|_U$ is \emph{representable} and $U$ and $X'$ have
separated (resp.\ locally separated) diagonals, then $X$ has separated
(resp.\ locally separated) diagonal.\label{PI:etnbhd:sep/lsep-diag}
\end{enumerate}
\end{proposition}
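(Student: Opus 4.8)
The plan is to prove the statements roughly in the order listed, since the later ones build on the earlier ones; many are immediate consequences of the descent/push-out machinery already established. For~\ref{PI:etnbhd:etsurj}, the morphism $f\amalg j$ is étale because each of $f$ and $j$ is, and it is surjective because $f$ is already surjective over $Z$ (being an isomorphism onto $Z_\red$ by Lemma~\tref{L:etnbhd-chars}) while $j$ is surjective over $U=X\setminus Z$. For~\ref{PI:etnbhd:nbhd-diagonal}, I would use the cartesian square relating $\Delta_f$ to $j'\times j'$ and $\Delta_{f_U}$ displayed just before the proposition: since $f|_U$ is étale, $\Delta_{f_U}$ is étale, hence $\Delta_f$ is étale; and to see it is a neighborhood of the complement of $U'\times_U U'$ in $X'\times_X X'$, apply the fiberwise criterion Lemma~\tref{L:etnbhd-chars}(iii) — over a point of $X'\times_X X'$ lying outside $U'\times_U U'$, at least one of the two factors maps into $Z$, so the fiber of $f$ over the corresponding point of $X$ is trivial, forcing the fiber of $\Delta_f$ to be trivial. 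Statements~\ref{PI:etnbhd:nbhd-inertia} and~\ref{PI:etnbhd:nbhd-inertia2} follow the same pattern using the displayed cartesian square for inertia stacks and the description of $I_f$ as a composition of a pull-back of $\Delta_f$ and a pull-back of $f$, both of which are étale neighborhoods of the appropriate loci by~\ref{PI:etnbhd:nbhd-diagonal} and the hypothesis on $f$; being an étale neighborhood is stable under composition and base change in this set-theoretic sense.

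For~\ref{PI:etnbhd:j-qc} I would argue on $|X|=|U|\cup f(|X'|)$: a subset of $|X|$ is constructible/quasi-compact open iff its preimages under the étale surjection $f\amalg j$ are, and quasi-compactness of $j'$ together with the fact that $f$ is an isomorphism over $Z$ controls the preimage; this is really a statement about the topological space and the open cover. Statement~\ref{PI:etnbhd:open/clopen/iso/surj/qcopen} is checked by the universal fiberwise criterion: $f$ is an isomorphism over $Z$ by hypothesis, so whatever property $f|_U$ has over $U$ and $f$ has over $Z$ (trivially) glues — formally, one pulls back along $f\amalg j$ or checks on the cover $U\cup X'$ that the map $X'\to X$ has the stated property, using that these properties are étale-local on the target and can be verified separately over $U$ and in a neighborhood of $Z$. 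Statement~\ref{PI:etnbhd:repr/repr+sep} is the key representability transfer: $f$ is representable iff $\Delta_f$ is a monomorphism, and by~\ref{PI:etnbhd:nbhd-diagonal} together with Lemma~\tref{L:etnbhd-chars} the diagonal $\Delta_f$ is an étale neighborhood of the bad locus, so if $\Delta_{f_U}$ is a monomorphism (i.e. $f|_U$ representable) then $\Delta_f$ is a monomorphism over $U'\times_U U'$ and an isomorphism over the rest, hence a monomorphism everywhere; the same argument handles quasi-separated and separated by applying it to $\Delta_f$ and $\Delta_{\Delta_f}$. Statement~\ref{PI:etnbhd:qc/qs} combines~\ref{PI:etnbhd:j-qc} with the observation that, once $j$ is known quasi-compact, $f$ differs from $f|_U$ only over a quasi-compact piece where $f$ is an isomorphism, so quasi-compactness, quasi-separatedness, and finite presentation all descend.

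Finally, for the properties of $X$ itself: for~\ref{PI:etnbhd:alg-space}, by Theorem~\tref{T:ETNBHD-IS-PUSHOUT} or directly, $X$ is covered by the algebraic spaces $U$ and $X'$ glued along $U'$, and $X\to U\amalg X'$ being representable (diagonal is a monomorphism, by~\ref{PI:etnbhd:repr/repr+sep} applied with $f|_U$ an isomorphism onto an open of an algebraic space, hence representable) shows $X$ has no stacky structure; more carefully one shows $\Delta_X$ is a monomorphism using that $I_X\to X$ is trivial, which follows from~\ref{PI:etnbhd:nbhd-inertia} since $I_U\to U$ and $I_{X'}\to X'$ are isomorphisms. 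Statement~\ref{PI:etnbhd:qsep} follows from~\ref{PI:etnbhd:nbhd-diagonal}: the diagonal $\Delta_X\colon X\to X\times X$ is quasi-compact and quasi-separated iff $X'\times_X X'\to X'\times X'$ is, after étale base change along the cover, and $X'\times_X X'$ is built from $U'\times_U U'$ and $X'$ (via the diagonal $\Delta_{X'/X}$) by the same push-out construction, so quasi-separatedness propagates once $j'$ is quasi-compact; alternatively invoke Theorem~\tref{T:EXIST-PUSHOUTS}(iii). Statement~\ref{PI:etnbhd:sep/lsep-diag} is the subtlest: assuming $f|_U$ representable, $\Delta_f$ is representable by~\ref{PI:etnbhd:repr/repr+sep}, and one factors $\Delta_X$ through $X\times_? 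X$ relative to the cover; the separatedness of $\Delta_X$ is equivalent (étale-locally on the source) to $\Delta_{X'/X}$ being separated, which sits in the push-out diagram exhibiting $X'\times_X X'$ from $U'\times_U U'$ and $X'$ along $U'$, and the representability of $f|_U$ is exactly what is needed to apply the separation part of Theorem~\tref{T:EXIST-PUSHOUTS}(iv) (or Appendix~\ref{A:separation}) to that push-out. I expect the main obstacle to be~\ref{PI:etnbhd:sep/lsep-diag}: keeping straight which diagonal and which base change makes separatedness of $\Delta_X$ equivalent to a property of a morphism living over the push-out, and verifying that the representability hypothesis on $f|_U$ is what rescues the argument — the other parts are essentially formal applications of the fiberwise criterion of Lemma~\tref{L:etnbhd-chars} and stability of "étale neighborhood" under base change and composition.
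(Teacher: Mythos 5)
Your treatment of parts (i)--(vii) is essentially the paper's: (i)--(iv) via the displayed cartesian squares and the fiberwise criterion of Lemma~\pref{L:etnbhd-chars}, (v)--(vi) via the \etale{} surjection $f\amalg j$ and the monomorphism/fiberwise argument, and (vii) by transferring along the diagonal using (ii) (the paper phrases this as applying (vi) to $\Delta_{X'/X}$). The first genuine gap is in (viii): the justification ``once $j$ is quasi-compact, $f$ differs from $f|_U$ only over a quasi-compact piece where $f$ is an isomorphism'' is not a valid reduction, because in general there is \emph{no} open substack containing $Z$ over which $f$ is an isomorphism (take $X'=X\amalg U$ as in Examples~\pref{E:bad-etnbhds}(i)). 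The actual content of (viii) is that the pull-back of $f$ along $f\amalg j$ is $\pi_2\amalg f|_U$ with $\map{\pi_2}{X'\times_X X'}{X'}$, and quasi-compactness of $\pi_2$ is what has to be proved; the paper gets it from (ii), since $\Delta_{X'/X}\amalg (j'\times j')\colon X'\amalg(U'\times_U U')\to X'\times_X X'$ is \emph{surjective} and its composition with $\pi_2$ is quasi-compact (using that $j'$ and $f|_U$ are). Nothing in your plan supplies this step, and the same omission recurs in your sketch of (x), where quasi-compactness and quasi-separatedness of $X'\times_X X'\to X'\times X'$ again need this surjectivity together with Lemma~\pref{L:quasi-separatedness}; note also that your alternative of invoking Theorem~\tref{T:EXIST-PUSHOUTS}(iii) is circular, since the paper proves that clause of Theorem~\tref{T:EXIST-PUSHOUTS} by citing the present proposition.

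The second genuine gap is (xi). The asserted equivalence ``separatedness of $\Delta_X$ is equivalent, \etale{}-locally on the source, to separatedness of $\Delta_{X'/X}$'' is false: separatedness is not local on the source, and separatedness of $\Delta_X$ is a condition on the inertia (the unit section $X\to I_X$ being a closed immersion, Lemma~\pref{L:diagonal-separatedness}), not a condition on $f$; Examples~\pref{E:bad-etnbhds}(iii)--(iv) show the statement genuinely fails for non-representable $f|_U$ even when $f|_U$ is proper, so a correct proof must use representability the way the paper does, not merely cite it. Moreover your appeal to Theorem~\tref{T:EXIST-PUSHOUTS}(iv) is again circular for the same reason as above. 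The paper's argument is: reduce to showing the unit $X\to I_X$ is a closed immersion (resp.\ an immersion), check this after pulling back along $U\amalg X'\to X$, and over $X'$ use that $f$ is representable (by (vii)) so that $I_X\times_X X'$ is the union of the two \emph{open} substacks $I_U\times_U U'$ and $I_{X'}$ (the latter open because it is a pull-back of the open immersion $\Delta_f$, and covering the complement by (iv)), on which the unit restricts to the given closed immersions (resp.\ immersions). Your fallback to Appendix~\ref{A:separation} rescues only the locally separated case, via Lemma~\pref{L:diag-separated:descent}(ii) (a representable \etale{} morphism has open-immersion diagonal), but not the separated case, since the cover $U\amalg X'\to X$ need not be separated. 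Finally, a smaller slip in (ix): a representable \etale{} surjection from an algebraic space only exhibits $X$ as Deligne--Mumford, not as an algebraic space; your ``more careful'' route via trivial inertia is the right one, but it needs part (iv) (pulled back to $X'$, then the isomorphism case of (vi)) rather than part (iii).
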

\begin{proof}
\ref{PI:etnbhd:etsurj}--\ref{PI:etnbhd:nbhd-inertia2} are obvious.

\ref{PI:etnbhd:j-qc}: If $j'$ is quasi-compact then so is $j$ since the
pull-back of $j$ along the \etale{} surjective morphism $f\amalg j$ is
$j'\amalg\id{U}$.

\ref{PI:etnbhd:open/clopen/iso/surj/qcopen}:
If $f|_U$ is an open immersion, then $f$ is a monomorphism and hence an open
immersion. If in addition $f|_U$ is quasi-compact (resp.\ closed), then so is
$f$ since the pull-back of $f$ along the open covering $X'\amalg U\to X$ is
quasi-compact (resp.\ closed). If $f|_U$ is surjective, then so is $f$.

\ref{PI:etnbhd:repr/repr+sep}:
Apply~\ref{PI:etnbhd:open/clopen/iso/surj/qcopen} to the diagonal
$\Delta_{X'/X}$.

\ref{PI:etnbhd:qc/qs}:
Assume that $f|_U$ and $j'$ are quasi-compact. The pull-back of $f$ along
$f\amalg j$ is $\pi_2\amalg f|_U$ where $\map{\pi_2}{X'\times_X X'}{X'}$ is
the second projection. By assumption $f|_U$ is quasi-compact and $\pi_2$ is
quasi-compact as the composition $\map{\pi_2\circ (\Delta_{X'/X}\amalg
(j'\times j'))}{X'\amalg (U'\times_U U')}{X'}$ is quasi-compact. Thus, $f$
is quasi-compact.

Similarly, if $j'$ and the diagonal of $f|_U$ are quasi-compact we apply
the previous argument to the \etale{} neighborhood $\Delta_f$ of
$X'\times_X X'\setminus U'\times_U U'$ and conclude that $\Delta_f$ is
quasi-compact. In particular, if $f|_U$ is quasi-separated (i.e., if its
diagonal and the diagonal of the diagonal are quasi-compact) it follows that
$f$ is quasi-separated.

\ref{PI:etnbhd:alg-space}:
Assume that $U$ and $X'$ are algebraic spaces
so that $I_U=I_X\times_X U\to U$ and $I_{X'}\to X'$ are isomorphisms.  To show
that $X$ is an algebraic space it is thus enough to show that $I_X\times_X
X'\to X'$ is an isomorphism. By~\ref{PI:etnbhd:nbhd-inertia2} we have an
\etale{} neighborhood as described by the diagram
$$\xymatrix{%
U'=I_{U'}\ar[r]\ar[d] & X'=I_{X'}\ar[d]\\
U'=I_{U}\times_U U'\ar[r] & I_{X}\times_X X'}$$
and it follows that $I_X\times_X X'=X'$
by~\ref{PI:etnbhd:open/clopen/iso/surj/qcopen}.

\ref{PI:etnbhd:qsep}:
First note that $f|_U$ is quasi-separated
so that $f$ is quasi-separated
by~\ref{PI:etnbhd:qc/qs}. We have to prove that $\Delta_X$ is quasi-compact and
quasi-separated. This is an \etale{}-local question on $X\times X$ so it is
enough to show that the pull-backs of $\Delta_X$ along $j\times \id{X}$,
$\id{X}\times j$ and $f\times f$ are quasi-compact and quasi-separated. The
first pull-back is $\map{(\id{U},j)}{U}{U\times X}$ which is quasi-compact and
quasi-separated since $j$ is quasi-compact and quasi-separated and $U$ is
quasi-separated. The second pull-back is similar to the first one. The third
pull-back is $X'\times_X X'\to X'\times X'$. Since $f$
is quasi-separated,
$$\map{\Delta_f\amalg j'\times j'}{X'\amalg U'\times_U U'}{X'\times_X X'}$$
is quasi-compact, quasi-separated and surjective. It thus follows that
$X'\times_X X'\to X'\times X'$ is quasi-compact and quasi-separated from
Lemma~\pref{L:quasi-separatedness} since $\Delta_{X'}$, $\Delta_{U}$ and $j'$
are quasi-compact and quasi-separated.


\ref{PI:etnbhd:sep/lsep-diag}:
Assume that $f|_U$ is representable so that $f$
is representable by~\ref{PI:etnbhd:repr/repr+sep}. Further assume that
$\Delta_U$ and $\Delta_{X'}$ are separated (resp.\ locally separated) so that
the unit sections $U\to I_U$ and $X'\to I_{X'}$ are closed immersions
(resp.\ immersions). To see that $\Delta_X$ is separated (resp.\ locally
separated), we have to show that $X\to I_X$ is a closed immersion (resp.\ an
immersion).
%
As $X'\amalg U\to X$ is \etale{} and surjective and $U\to I_U=I_X\times_X U$ is
a closed immersion (resp.\ an immersion), it is enough to show that
$X'\to I_X\times_X X'$ is a closed immersion (resp.\ an immersion). As $f$ is
representable, we have that $I_X\times_X X'$ is the
union of two open subsets $I_U\times_U U'$ and $I_{X'}$. The restrictions of
$X'\to I_X\times_X X'$ to these open subsets are $U'\to I_U\times_U U'$ and
$X'\to I_{X'}$ which both are closed immersions (resp.\ immersions).
\end{proof}

\begin{examples}\label{E:bad-etnbhds}
We give some examples showing that $X$ and $f$ can be rather ``bad'' even if
$f|_U$, $X'$ and $U$ are ``nice''.
\begin{enumerate}
\item ($f|_U$ finite but $f$ not proper) Let $U\subset X$ be an open non-closed
subset, let $X'=U\amalg X$ and let $\map{f}{X'}{X}$ be the natural morphism.
Then $f$ is a non-proper \etale{} neighborhood of $X\setminus U$ and $f|_U$
is finite.
\item ($f|_U$ proper but $f$ not separated) Let $U\subset X$ be an open
non-closed subset and let $G\to X$ be the group scheme $G=X\amalg U\subset
X\times \Z/2\Z$. Let $X'=\mathrm{B}G=[X/G]$ so that
$U'=U\times \mathrm{B}(\Z/2\Z)$. Then
$\map{f}{X'}{X}$ is an \etale{} non-separated neighborhood of $X\setminus U$
and $f|_U$ is proper (a trivial \etale{} $\Z/2\Z$-gerbe).
\item ($f|_U$ proper, $U$ and $X'$ separated but $\Delta_X$ not separated)
Let $U\subset Y$ be an open non-closed subset. Let $G'=Y\times \Z/2\Z$ be
the constant group scheme, let $H=Y\amalg U\subset G'$ be the induced
subgroup and let $G=Y\amalg_U Y=G'/H$ so that $G$ is a non-separated
group scheme. Let $X=[Y/G]$ and $X'=[Y/G']$ where both group actions are
trivial. Then $\map{f}{X'}{X}$
is an \etale{} neighborhood of $X\setminus U$ such that $U$ and $X'$ are
separated and $f|_U$ is proper (a trivial \etale{} $\Z/2\Z$-gerbe).
\item ($f|_U$ proper, $U$ and $X'$ separated but $\Delta_X$ not locally
separated)
Let $p$ be a prime, let $G'=\mu_{p,\Z}$, let
$H=\mu_{p,\Z[1/p]}\cup_{\Spec \Z[1/p]} \Spec \Z\inj G'$ and
let $G=G'/H$ so that $G$ is not locally separated. Indeed, $G\to \Spec \Z$ is a
flat birational universal homeomorphism but not an isomorphism. Then let
$X=[\Spec(\Z)/G]$, $X'=[\Spec(\Z)/G']$ and $U=\Spec \Z[1/p]$ so that
$f|_U$ is a trivial $\mu_{p,\Z[1/p]}$-gerbe.
\item ($f|_U$ finite, $U$ and $X'$ separated schemes but $X$ not separated)
Let $U=\A{1}$ be the affine line, let $U'=U\amalg U$ and let $X'=\P^{1}\amalg
\P^{1}$. Then $X$ is a non-separated ``projective'' line.
\item ($f|_U$ finite, $U$ and $X'$ separated schemes but $X$ not locally
separated)
Let $U=\A{1}$ be the affine line, let $U'=U\amalg U$ and let $X'=\P^{1}\amalg_{\infty} \P^{1}$ be two secant lines. Then $X$ is a standard example of a
non-locally separated algebraic space.
\end{enumerate}
\end{examples}

\end{section}


\begin{section}{\Etale{} gluings of stacks}\label{S:etnbhd-pushouts}
In this section we will prove Theorem~\tref{T:EXIST-PUSHOUTS} on the existence
of the pushout of an open immersion $\map{j'}{U'}{X'}$ and an \etale{}
morphism $\map{f_U}{U'}{U}$.
If $U$, $U'$ and $X'$ are \emph{algebraic spaces}, then it is rather
straight-forward to construct the pushout $X$ of $j'$ and $f_U$. Indeed, by
Theorem~\tref{T:ETNBHD-IS-PUSHOUT} we know a priori that $X'\times_X X'$ has to
be the pushout of $j'$ and $\Delta_{U'/U}$ and by assumption both these maps
are open immersions so we can construct the algebraic space $R'=X'\times_X X'$
as this pushout. The universal property of the pushout gives a morphism $R'\to
X'\times X'$ and it can be shown that this is an \etale{} equivalence
relation. The space $X$ is then the quotient of this equivalence
relation. Similarly, if $U'$ and $X'$ are algebraic spaces and $U$ is an
algebraic stack, then we can construct $R'$ as above and equip $(R',X')$ with a
groupoid structure although it is slightly tedious to verify that this is
indeed a groupoid.

For arbitrary $X'$ this procedure is not so straight-forward as the groupoid
$\equalizer{R'}{X'}$ would be a groupoid in stacks (and even with
non-representable morphisms if $f_U$ is not representable!). The most natural
approach is to first define the quotient $X$ as a $2$-stack and then show that
$X$ is
equivalent to a $1$-stack. That $X$ is indeed a $1$-stack follows from the
fact that the stabilizer $R'\times_{X'\times X'} X'\to X'$ is representable.
A brief description of how $X$ can be constructed in this manner is given
in~\cite[2.5.1]{romagny_components-in-families}.
However, to avoid the language of $2$-stacks and groupoids in stacks we will
do an explicit, albeit somewhat less natural, construction.

\begin{proof}[Proof of Theorem~\tref{T:EXIST-PUSHOUTS}]
When $\map{f_U}{U'}{U}$ is a monomorphism (resp.\ representable,
resp.\ arbitrary) then the
diagonal $\Delta_{U'/U}$ is an isomorphism (resp.\ a monomorphism, resp.\
representable). We will assume that the theorem is true when $f_U$ is an
isomorphism (resp.\ a monomorphism, resp.\ representable) and show that the
theorem is true when $f_U$ is a monomorphism (resp.\ representable, resp.\
arbitrary). When $f_U$ is an isomorphism, then $X=X'$ is the pushout.

We can thus assume that the pushout $R'$ of $\Delta_{U'/U}$ and $j'$ exists
and fits into the bi-cartesian square
$$\xymatrix{U'\ar[r]^{j'}\ar[d]_{\Delta_{U'/U}} & X'\ar[d]^{\Delta}\\
U'\times_U U'\ar[r] & R'.}$$
For $k=1,2$, the morphisms $\map{j'\circ\pi_k}{U'\times_U U'}{X'}$ and
$\id{X'}$ induce a morphism $\map{q_k}{R'}{X'}$. When the existence of the
pushout $X$ has been settled, then $R'=X'\times_X X'$ and under this
identification $\Delta$ becomes the diagonal and $q_k$ the projection onto the
$k$\textsuperscript{th} factor.

Let $\map{p}{X'_1}{X'}$ be a smooth presentation and let
$\equalizer{X'_2}{X'_1}$ be the induced groupoid with quotient $X'$.
Let $\equalizer{U'_2}{U'_1}$
be the pull-back of the groupoid along $j'$.

Consider the following category $\catC$ fibered over $\Sch$ (we will eventually
show that this is the pushout $X$). An object of $\catC$ over $T\in\Sch$
consists of
\begin{enumerate}
\item an open subset $T_\circ \subseteq T$,
\item a groupoid $\equalizer{T'_2}{T'_1}$ over $T$,
\item a morphism $\map{g_\circ}{T_\circ}{U}$,
\item morphisms $\map{g'_1}{T'_1}{X'_1}$ and $\map{g'_2}{T'_2}{X'_2}$,
\end{enumerate}
such that
{\renewcommand{\theenumi}{{\upshape{(\alph{enumi})}}}
\begin{enumerate}
\item the diagrams
$$\vcenter{\xymatrix{T'_2\ar[r]^{g'_2}\ar[d]_s & X'_2\ar[d]_s\\
T'_1\ar[r]_{g'_1} & X'_1}}\quad\text{and}\quad
\vcenter{\xymatrix{T'_2\ar[r]^{g'_2}\ar[d]_t & X'_2\ar[d]_t\\
T'_1\ar[r]_{g'_1} & X'_1}}$$
are cartesian,
\item the inverse images $T'_{1\circ}\subseteq T'_1$ and
$T'_{2\circ}\subseteq T'_2$ of $T_\circ\subseteq T$ coincide with
$(g'_1)^{-1}(U'_1)$ and $(g'_2)^{-1}(U'_2)$,
\item the diagram
$$\xymatrix{T'_{1\circ}\ar[r]\ar[d] & U'_1\ar[d]\\ T_{\circ}\ar[r] & U}$$
is cartesian,
\item the stack quotient $T'=[\equalizer{T'_2}{T'_1}]$ is an \etale{}
neighborhood of $T\setminus T_\circ$ in $T$.
\end{enumerate}}

For an object $(T_\circ\subseteq T,T'_{\bullet}\to T,g_\circ,g'_{\bullet})$ we
thus obtain a cartesian diagram
$$\xymatrix{U'\ar[r]^{j'}\ar[d]_{f_U} & X'\\
U & T'_\circ\ar[r]\ar[d]\ar[ul] & T'\ar[d]\ar[ul]_{g'}\ar@{}[ull]|\square\\
  & T_\circ\ar[r]\ar[ul]_{g_\circ}\ar@{}[uul]|\square & T.\ar@{}[ul]|\square}$$
where the bottom-right square is bi-cartesian by
Theorem~\tref{T:ETNBHD-IS-PUSHOUT}.
%

A morphism in $\catC$ over $S\to T$ is a commutative diagram
$$\xymatrix{S'_2\ar@<.5ex>[r]\ar@<-.5ex>[r]\ar[d] & S'_1\ar[r]\ar[d]
 & S\ar[d] & S_\circ\ar[l]\ar[d]\dduppertwocell{<-2>}\\
T'_2\ar@<.5ex>[r]\ar@<-.5ex>[r]\ar[d] & T'_1\ar[r]\ar[d]
 & T & T_\circ\ar[l]\ar[d]\\
X'_{2}\ar@<.5ex>[r]\ar@<-.5ex>[r] & X'_{1} && U}$$
such that all natural squares are cartesian.

By \etale{} descent of algebraic spaces, the category $\catC$ is a stack.

Let $\map{j}{U}{\catC}$ be the morphism taking a morphism $\map{h}{T}{U}$ to
the object of $\catC(T)$ given by
\begin{itemize}
\item $T_\circ=T$ and $\map{g_\circ=h}{T_\circ=T}{U}$,
\item $T'_i=U'_i\times_U T$ and $g'_i=j'_i\circ \pi_1\colon T'_i\to
U'_i\to X'_i$ for $i=1,2$.
\end{itemize}
The pull-back of an object $(T_\circ\subseteq T,T'_{\bullet}\to
T,g_\circ,g'_\bullet)$ along $j$ is $(T_\circ\subseteq
T_\circ,T'_{\circ\bullet}\to T_\circ,g_\circ,g'_{\circ\bullet})$ and hence $j$
is an open immersion.

Let $\map{f}{X'}{\catC}$ be the morphism taking a morphism $\map{h}{T}{X'}$ to
the object of $\catC(T)$ given by
\begin{itemize}
\item $T_\circ=h^{-1}(U')$ and $\map{g_\circ=f_U\circ h|_{U'}}{T_\circ}{U}$,
\item $T'_i=T\times_{X',q_1} R'\times_{q_2,X'} X'_i$ for $i=1,2$, and the
morphisms $\map{g'_i=\pi_3}{T'_i}{X'_i}$ for $i=1,2$.
\end{itemize}
In particular, we have a groupoid $\equalizer{T'_1}{T'_2}$ with quotient
$T'=T\times_{X',q_1} R'$ and the induced map $\map{g'}{T'}{X'}$ is
$q_2\circ\pi_2$. We note that there is a section $\map{s=(\id{T},\Delta\circ
h)}{T}{T'}$ and a $2$-morphism $g'\circ s\Rightarrow h\circ\pi_1\circ s=h$.

Let $(S_\circ\subseteq S,S'_{\bullet}\to S,g_\circ,g'_{\bullet})$ be an object
of $\catC(S)$. We will now show that the square
\begin{equation}\label{E:fiber-product}
\vcenter{\xymatrix{S'\ar[r]^{g'}\ar[d] & X'\ar[d]^f\\
S\ar[r] & \catC}}
\end{equation}
is $2$-cartesian.

The first step is to show that it is $2$-commutative. Let $T$ be a scheme and
let $T\to S'$ be a morphism. Let $T_\circ=T\times_{S'} S'_{\circ}$ so that
$T\times_{X'} U'=T_\circ=T\times_{S} S_\circ$. The composition $T\to S'\to S\to
\catC$ corresponds to the object
\begin{equation}\label{E:TS-object}
\vcenter{\xymatrix{T'_{\bullet}\ar[r]\ar[d] & T & T_\circ\ar[l]\ar[d]\\
X'_{\bullet} & & U}}
\end{equation}
where $T'_i=S'_i\times_S T$. The second composition $T\to S'\to X'\to \catC$
corresponds to the object
\begin{equation}\label{E:TX'-object}
\vcenter{\xymatrix{T\times_{X',q_1} R'\times_{q_2,X'} X'_{\bullet}\ar[r]\ar[d]
 & T & T_\circ\ar[l]\ar[d]\\
X'_{\bullet} & & U.}}
\end{equation}
We have the following bi-cartesian squares
$$\vcenter{\xymatrix{U'\ar[r]\ar[d]_{\Delta_{U'/U}} & X'\ar[d]_{\Delta}\\
U'\times_U U'\ar[r] & R'}}\quad\text{and}\quad
\vcenter{\xymatrix{S'_\circ\ar[r]\ar[d]^{\Delta_{S'_\circ/S_\circ}}
  & S'\ar[d]^{\Delta_{S'/S}}\\
S'_\circ\times_{S_\circ} S'_\circ\ar[r] & S'\times_S S'.}}$$
Taking pull-backs of these squares along $\map{\pi_2}{T\times_{X',q_1} R'}{R'}$
and
$T'=T\times_S S'\to S'\times_S S'$ respectively, gives the bi-cartesian squares
$$\vcenter{\xymatrix{T_\circ\ar[r]\ar[d] & T\ar[d]\\
T_\circ\times_U U'\ar[r] & T\times_{X',q_1} R'}}\quad\text{and}\quad
\vcenter{\xymatrix{T_\circ\ar[r]\ar[d] & T\ar[d]\\
T'_\circ\ar[r] & T'.}}$$
Note that $T'_\circ=T_\circ\times_U U'$ so that by the universal property of
pushouts there is an isomorphism of stacks $T\times_{X',q_1} R'\iso T'$.
It follows that there is a $2$-morphism
$$(T\to S'\to X'\to \catC)\Rightarrow (T\to S'\to S\to \catC)$$
and hence the diagram \eqref{E:fiber-product} is $2$-commutative.

To show that the diagram is $2$-cartesian, let $\map{k}{T}{S}$
and $\map{h}{T}{X'}$ be morphisms with a given $2$-morphism $(T\to X'\to
\catC)\Rightarrow (T\to S\to
\catC)$. The morphisms $T\to S\to \catC$ and $T\to X'\to\catC$ correspond to
objects as described in~\eqref{E:TS-object} and~\eqref{E:TX'-object} and the
$2$-morphism gives a cartesian diagram
\begin{equation}
\vcenter{\xymatrix{T\times_{X',q_1} R'\times_{q_2,X'} X'_{\bullet}\ar[r]\ar[d]^{\iso}
 & T\ar@{=}[d] & T_\circ\ar[l]\ar@{=}[d]\\
T'_{\bullet}\ar[r]\ar[d]_{k'_{\bullet}}
 & T\ar[d]_k & T_\circ\ar[l]\ar[d]_{k_\circ}\\
S'_{\bullet}\ar[r]\ar[d]_{g'_{\bullet}} & S & S_\circ\ar[l]\ar[d]_{g_\circ}\\
X'_{\bullet} & & U.}}
\end{equation}
The section $\map{s=(\id{T},\Delta\circ h)}{T}{T\times_{X',q_1} R'}$ induces a
canonical section $\map{s}{T}{T'}$ and a $2$-morphism $g'\circ k'\circ
s\Rightarrow h$ that fits into the diagram
$$\xymatrix{T\ar[dr]_{k'\circ s}\drruppertwocell^{h}{^<-1>}\ddrlowertwocell_{k}{\omit\circ}\\
& S'\ar[r]^{g'}\ar[d]\drtwocell<\omit>{} & X'\ar[d]^f\\
& S\ar[r] & \catC.}$$
This shows that the square is $2$-cartesian. Note that $X'\times_\catC X'\iso
R'$ as asserted in the beginning of the proof.

It follows that the morphism $\map{f\amalg j}{X'\amalg U}{\catC}$ is \etale{}
and surjective and that $f$ is an \etale{} neighborhood of $\catC\setminus
U$. Indeed, the pull-back of $f\amalg j$ along $S\to\catC$ is $S'\amalg
S_\circ\to S$ and this morphism is \etale{} and surjective and $S'\to S$ is an
\etale{} neighborhood. In particular, $\catC$ admits a smooth presentation
and is hence algebraic.

Finally we deduce from Theorem~\tref{T:ETNBHD-IS-PUSHOUT} that $\catC$ is the
pushout of $f$ and $j'$. As the pull-back of an \etale{} neighborhood is
an \etale{} neighborhood, the pushout commutes with arbitrary base change. The
remaining properties listed in Theorem~\tref{T:EXIST-PUSHOUTS} is part of
Proposition~\pref{P:etnbhd-properties}.
\end{proof}

\begin{remark}
If $j'$ is quasi-compact and $U$ and $X'$ are quasi-separated, then
$X=\catC$ is \emph{a posteriori} a quasi-separated stack. The reader who does
not want to introduce a priori non quasi-separated stacks in the proof can
verify directly that when $j'$ is quasi-compact, then
$$(U\amalg X')\times_{\catC} (U\amalg X')\to (U\amalg X')\times (U\amalg X')$$
is indeed representable, quasi-compact and quasi-separated so that
$\catC$ is a quasi-separated stack.
%
\end{remark}

\begin{proposition}\label{P:cartesian-cube}
Given a cartesian diagram of algebraic stacks
$$\xymatrix{U\ar[d] & U'\ar[l]_{f_U}\ar[r]^{j'}\ar[d] & X'\ar[d]\\
V & V'\ar[l]_{g_V}\ar[r]^{k'} & Y'}$$
such that $j'$ and $k'$ are open immersions and $f_U$ and
$g_V$ are \etale{}, let $X=X'\amalg_{U'} U$ and $Y=Y'\amalg_{V'} V$ denote the
pushouts. Then every face of the induced cube
$$\xymatrix@!C=1pc@!R=0.7pc{U'\ar[rr]^(.3){j'}\ar[rd]\ar[dd]_(.3){f_U}
                && X'\ar[rd]\ar[dd]_(.3){f}|\hole \\
& V'\ar[rr]_(.7){k'}\ar[dd]^(.7){g_V} && Y'\ar[dd]^(.7){g}\\
U\ar[rr]^(.3){j}|\hole\ar[rd] && X\ar[rd]\\
& V\ar[rr]_(.7){k} && Y}$$
is cartesian.
\end{proposition}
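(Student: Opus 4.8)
The plan is to exploit the fact, established in Theorem~\tref{T:EXIST-PUSHOUTS}, that the two horizontal push-out squares (the top face $U'\to X'$, $U'\to U$, giving $X$; and the bottom face $V'\to Y'$, $V'\to V$, giving $Y$) are simultaneously cartesian and co-cartesian, and that the formation of these push-outs commutes with arbitrary base change. Four of the six faces of the cube are essentially given: the ``front'' and ``back'' faces (containing $j',j$ and $k',k$ respectively) are the defining bi-cartesian squares of $X=X'\amalg_{U'}U$ and $Y=Y'\amalg_{V'}V$, hence in particular cartesian; and the two ``outer'' faces are the two given cartesian squares $U'=V'\times_V U$ (over $f_U,g_V$) and $X'=Y'\times_{Y'}\cdots$ — more precisely the left face with vertices $U',V',U,V$ and the right face with vertices $U',V',X',Y'$ are the two halves of the hypothesis diagram, and are cartesian by assumption. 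So what genuinely needs proof is that the remaining two faces — the ``top'' face $U'\to X'$, $U'\to V'$, $X'\to Y'$ (with vertices $U',X',V',Y'$) and the ``bottom'' face $U\to X$, $U\to V$, $X\to Y$ (with vertices $U,X,V,Y$) — are cartesian. Wait: re-examining, the top face $U',X',V',Y'$ \emph{is} the hypothesis square, so it is cartesian. The one face that is not immediate is the bottom face $U,X,V,Y$: that the square
$$\xymatrix{U\ar[r]^j\ar[d] & X\ar[d]^g\\ V\ar[r]_k & Y}$$
is cartesian.

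First I would reduce everything to this single bottom face. The strategy for the bottom face is base change: form the fiber product $P=V\times_Y X$ and produce a morphism $X\to P$ over $V$; to show it is an isomorphism, I would pull back along the \etale{} surjection $g_V\amalg k\colon V'\amalg V\to V$ (which is \etale{} and surjective by Proposition~\tref{P:etnbhd-properties}\ref{PI:etnbhd:etsurj} applied to the push-out $Y=Y'\amalg_{V'}V$). Over the component $V\to V$ the pullback of $Y\leftarrow X$ along $k$ is, by the defining property of $Y$ as a push-out together with base-change compatibility, simply $X$ again — more carefully, pulling the bottom co-cartesian square of $X$ back along $k$ and using that $X'\times_Y V' = X'$ (since $X'\to Y'$ pulls back $k'\colon V'\to Y'$ to an isomorphism, as $Y'=Y\times_Y Y'$ and $X'=X\times_Y Y'$... ) gives the identity. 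Over the component $V'\to V$, the fiber of $g\colon X\to Y$ is governed by the right face of the cube, which \emph{is} cartesian: $X'=Y'\times_Y X$ should follow since $f\colon X'\to X$ is an \etale{} neighborhood and $X'\times_X(Y'\times_Y X)=X'\times_Y Y'=X'$. The cleanest way to organize this is: (a) show $X'=Y'\times_Y X$, i.e.\ the face $U',X',V',Y'\to$ ... no — show the square with vertices $X',X,Y',Y$ is cartesian; (b) show the square with vertices $U',U,V',V$ is cartesian (this is given); (c) conclude the bottom face is cartesian by the \etale{}-descent criterion, pulling back along $V'\amalg V\to V$ and matching components with (a) and (b).

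Concretely the steps are: (1) Observe $X=X'\amalg_{U'}U$ and $Y=Y'\amalg_{V'}V$ with both defining squares bi-cartesian, and that $j,k$ are open immersions while $f,g$ are \etale{} neighborhoods, by Theorem~\tref{T:EXIST-PUSHOUTS}. (2) Construct the morphisms $f\colon X'\to X$, $g\colon Y'\to Y$, and the comparison morphisms $X\to Y$ (from $X'\to Y'\to Y$ and $U\to V\to Y$ using the universal property of the push-out $X$, together with the $2$-morphism supplied by the given cube on $U'$) and $X\to V$, hence $X\to V\times_V\cdots$, giving all the arrows of the cube and the needed $2$-commutativity; check the front, back, top, and the two outer faces are cartesian (front/back are the defining squares; top and the two outer faces are among the hypothesis squares). (3) For the square $\square_1$ with vertices $X',X,Y',Y$: use that $g$ is an \etale{} neighborhood of $Y\setminus V$ and that pulling back $g$ along $X\to Y$ recovers $f$ — equivalently, since the formation of $Y$ as a push-out commutes with base change along $X\to Y$, and $X'\times_{X}(\text{that pullback})$ unwinds to $X'\times_Y Y'=X'$ — to conclude $\square_1$ is cartesian. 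Here one uses that $X'\to X$ restricted over $U\subseteq X$ is $f_U\colon U'\to U$ and that $Y'\to Y$ restricted over $V\subseteq Y$ is $g_V\colon V'\to V$, matched via the outer cartesian faces. (4) Finally, for the bottom face $\square_2$ with vertices $U,X,V,Y$: being cartesian is \etale{}-local on $V$, so pull back along the \etale{} surjection $g_V\amalg k\colon V'\amalg V\to V$ coming from the push-out presentation of $Y$; over $V$ one gets the defining bi-cartesian square of $X$ (hence cartesian), and over $V'$ one gets, via $\square_1$ and the given cartesian squares $U'=V'\times_V U$, $X'=V'\times_{V'}X'$, a cartesian square; so $\square_2$ is cartesian.

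I expect the main obstacle to be step (3)–(4): carefully identifying the pullback of the push-out $Y$ along the various maps and checking that the resulting stack is the expected one. This is exactly the ``formation of the push-out commutes with arbitrary base change'' clause of Theorem~\tref{T:EXIST-PUSHOUTS}, but applying it correctly requires keeping track of which open substack ($U\subseteq X$, $V\subseteq Y$) and which \etale{} neighborhood is being restricted, and verifying the compatibilities are the ones induced by the hypothesis cube. Everything else — the universal-property constructions of the comparison morphisms, the fact that $j,k$ are open immersions, the \etale{}-descent argument in step (4) — is routine given the earlier results, and in particular the actual verification reduces, after \etale{} localization on the base $V$, to the two faces that are cartesian by hypothesis plus the defining bi-cartesian square of $X$.
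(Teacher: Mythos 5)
Your framing of the problem is sensible---four faces are either hypothesis squares or the defining bi-cartesian squares of the two push-outs, and what needs proof are the right face (vertices $X',X,Y',Y$) and the bottom face (vertices $U,X,V,Y$)---but the execution of both nontrivial steps has genuine gaps. In step (3) the argument is circular and rests on false identities: ``pulling back $g$ along $X\to Y$ recovers $f$'' \emph{is} the statement to be proven, and knowing from Theorem~\tref{T:EXIST-PUSHOUTS}(i) that $X$ is the push-out of the pulled-back diagram $V\times_Y X\leftarrow V'\times_Y X\to Y'\times_Y X$ does not identify these pieces with $U$, $U'$, $X'$; that identification is exactly the content of the bottom and right faces. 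Moreover $X'\times_Y Y'\neq X'$ in general (it equals $X'\times_{Y'}(Y'\times_Y Y')$, and $g$ is an \etale{} neighborhood, not a monomorphism), and $X'\to Y'$ pulls $k'$ back to the open immersion $j'$, not to an isomorphism---that is precisely the top hypothesis square. In step (4) the covering is miswritten (Proposition~\pref{P:etnbhd-properties}\ref{PI:etnbhd:etsurj} applied to $Y$ gives $g\amalg k\colon Y'\amalg V\to Y$, not $V'\amalg V\to V$), and, more seriously, the check over the $V$-component is circular: pulling the square $(U,X,V,Y)$ back along $k$ yields the assertion $U\iso X\times_Y V$ over $V$ again, not ``the defining bi-cartesian square of $X$''; only the $Y'$-component genuinely simplifies, and only after the right face is already known.

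The missing ingredient is some criterion that actually lets you conclude. The paper's proof base changes the whole cube along $X\to Y$ (legitimate by Theorem~\tref{T:EXIST-PUSHOUTS}(i)) to reduce to $X=Y$, and then uses that $f$, $g$, $j$, $k$ are \etale{} to check cartesianness fiberwise over points of $Y$, where the dichotomy $V=Y$ or $V=\emptyset$ makes everything trivial; you never invoke any fiberwise or pointwise criterion. Alternatively, your descent strategy can be repaired for the bottom face by descending along the cover $f\amalg j\colon X'\amalg U\to X$ of $X$ rather than a cover of $Y$: over $U$ both sides of the comparison $U\to V\times_Y X$ become $U$ (since $j$ and $k$ are monomorphisms), and over $X'$ it becomes $U'\to V\times_Y X'=V'\times_{Y'}X'=U'$ by the back and top faces. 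But the right face still requires a separate argument (for instance the pointwise one, since both $X'$ and $Y'\times_Y X$ are \etale{} over $X$), and without one of these inputs your steps (3)--(4) do not close.
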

\begin{proof}
We may replace $Y$, $Y'$, $V$ and $V'$ by their pull-backs along $X\to Y$ and
assume that $X=Y$. Since $f$, $g$, $j$ and $k$ are \etale{}, it is enough to
verify that the cube is cartesian over points of $Y$. We can thus assume that
$X=Y=\Spec k$. But then either $V=Y$ which implies that the top and the bottom
square are trivial and $f=g=f_U=g_V$, or $V=\emptyset$ which implies that
$U=U'=V=V'=\emptyset$ and $X=X'=Y=Y'$. Thus in either case, we have that $U=V$
and $X'=Y'$.
\end{proof}

\begin{corollary}\label{C:effective-etnbhd-descent-of-stacks}
Let $\map{j}{U}{X}$ be an open immersion of algebraic stacks and let
$\map{f}{X'}{X}$ be an \etale{} neighborhood of $X\setminus U$. Let
$U'=f^{-1}(U)$. Let $W_U\to U$ and $W'\to X'$ be morphisms of stacks and
let $W'|_{U'}\iso W_U\times_U U'$ be an isomorphism. Then there is a stack
$W\to X$, unique up to unique $2$-morphism, and morphisms $W'\to W$
and $W_U\to W$ such that every face of the cube
$$\xymatrix@!C=1pc@!R=0.7pc{W'|_{U'}\ar[rr]^(.3){}\ar[rd]\ar[dd]_(.3){}
                && W'\ar[rd]\ar[dd]_(.3){}|\hole \\
& U'\ar[rr]_(.7){j'}\ar[dd]^(.7){f|_U} && X'\ar[dd]^(.7){f}\\
W_U\ar[rr]^(.3){}|\hole\ar[rd] && W\ar[rd]\\
& U\ar[rr]_(.7){j} && X}$$
is cartesian.
\end{corollary}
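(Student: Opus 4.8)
The plan is to construct $Z$ as the push-out of $Z'|_{U'} = Z_U\times_U U' \to Z'$ along the composite $Z'|_{U'} = Z_U\times_U U'\to Z_U$, applying Theorem~\tref{T:EXIST-PUSHOUTS}. First I would note that the second of these maps is \'etale, being the base change of $f|_U$ along $Z_U\to U$, and the first is an open immersion, being the base change of $j'$ along $Z'\to X'$ (using the given identification $Z'|_{U'}\iso Z_U\times_U U'$). Hence the hypotheses of Theorem~\tref{T:EXIST-PUSHOUTS} are met and we obtain a bi-cartesian square
$$\xymatrix{Z'|_{U'}\ar[r]\ar[d] & Z'\ar[d]\\ Z_U\ar[r] & Z}$$
in which the right vertical map is an \'etale neighborhood of $Z\setminus Z_U$ and the bottom map is an open immersion. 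This immediately gives the back face and the left face of the cube, together with the morphisms $Z'\to Z$ and $Z_U\to Z$.

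Next I would produce the map $Z\to X$. The morphisms $Z'\to X'\to X$ and $Z_U\to U\to X$ agree, up to the evident $2$-isomorphism, after restriction to $Z'|_{U'}$ (both become the composite $Z'|_{U'}\to U'\to X$); by the universal property of the push-out $Z$ there is a morphism $g\colon Z\to X$, unique up to unique $2$-isomorphism, compatible with $Z'\to X'\to X$ and $Z_U\to U\to X$. This makes the top and bottom faces of the cube $2$-commute. Uniqueness of $Z$ itself up to unique $2$-morphism follows formally: any other stack $W\to X$ with the same data is, by the universal property of the push-out, canonically equivalent to $Z$ over $X$.

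It remains to check that the four side faces of the cube (the two trapezoidal faces containing $g\colon Z\to X$, plus front and back) are cartesian; the back face is cartesian by the bi-cartesian square above, and the left face ($Z'|_{U'}\to Z_U$ over $U'\to U$) is cartesian by construction. The essential point is that the face
$$\xymatrix{Z'\ar[r]\ar[d] & X'\ar[d]^f\\ Z\ar[r]^g & X}$$
is cartesian, and likewise $Z_U = Z\times_X U$. Here I would invoke Proposition~\pref{P:cartesian-cube}: we have the cartesian diagram with rows $(Z_U \leftarrow Z'|_{U'}\to Z')$ and $(U\leftarrow U'\to X')$, open immersions $Z'|_{U'}\inj Z'$, $j'$, and \'etale maps $Z'|_{U'}\to Z_U$, $f|_U$; forming push-outs gives exactly the cube above, and Proposition~\pref{P:cartesian-cube} asserts every face of that cube is cartesian. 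In particular $Z' = Z\times_X X'$ and $Z_U = Z\times_X U$, and the front and remaining faces are cartesian as well.

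The main obstacle is purely bookkeeping: matching the two cubes (the one abstractly produced by Theorem~\tref{T:EXIST-PUSHOUTS} plus the structure map $g$, versus the one in Proposition~\pref{P:cartesian-cube} applied to the displayed cartesian diagram of rows) and checking that the $2$-isomorphisms witnessing commutativity of the various squares are coherent. Since Proposition~\pref{P:cartesian-cube} already does the geometric work of verifying all faces are cartesian, no genuinely new argument is needed; the care lies in verifying that the identification $Z'|_{U'}\iso Z_U\times_U U'$ is precisely what turns $Z'|_{U'}\inj Z'$ into the base change of $j'$ and $Z'|_{U'}\to Z_U$ into the base change of $f|_U$, so that Proposition~\pref{P:cartesian-cube} applies verbatim.
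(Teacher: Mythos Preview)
Your proposal is correct and follows essentially the same route as the paper: construct $Z$ as the push-out of $Z'|_{U'}\to Z'$ and $Z'|_{U'}\to Z_U$ via Theorem~\tref{T:EXIST-PUSHOUTS}, then invoke Proposition~\pref{P:cartesian-cube} to see that every face of the resulting cube is cartesian. The only point where you are slightly less precise than the paper is uniqueness: to conclude that any other $W\to X$ with the same data is equivalent to $Z$ you need to know that $W$ is itself a push-out, and this is exactly Theorem~\tref{T:ETNBHD-IS-PUSHOUT} (since the cartesian faces force $Z'\to W$ to be an \'etale neighborhood of $W\setminus Z_U$); the paper cites this explicitly, whereas you gloss it as ``formally''.
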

\begin{proof}
Any stack $W\to X$ satisfying the condition of the Corollary is a pushout of
$W'|_{U'}\to W'$ and $W'|_{U'}\to W_U$ by Theorem~\tref{T:ETNBHD-IS-PUSHOUT}.
By Theorem~\tref{T:EXIST-PUSHOUTS}, the pushout $W$ exists and the cube
is cartesian by Proposition~\pref{P:cartesian-cube}.
\end{proof}

\end{section}


\begin{section}{Constructible sheaves}\label{S:constructible}
In this section, we show that given a constructible sheaf $\sF$ on a
quasi-compact and quasi-separated stack $X$, there is a finite filtration of
$X$ in open quasi-compact substacks such that $\sF$ is locally constant on the
induced stratification of $X$. We begin with a short review of constructible
sheaves on stacks, cf.\ \cite[Exp.~IX, \S2]{sga4} and~\cite[Ch.~18]{laumon}.

\begin{xpar}
Let $X$ be a quasi-compact and quasi-separated stack. Recall that a subset
$W\subseteq |X|$ is locally closed if $W$ is the intersection of a closed and an
open subset. A locally closed subset $W$ is constructible if and only if
$W$ and its complement are quasi-compact, or equivalently, if and only if
$W=U\setminus V$ where $V\subseteq U\subseteq X$ are open and quasi-compact.
\end{xpar}

\begin{xpar}
Let $X$ be a quasi-compact and quasi-separated \emph{scheme}. Let $\sF$ be a
sheaf of sets on the small \etale{} site of $X$. Recall that $\sF$ is
\emph{locally constant} if there exists a covering $\{U_i\to X\}$ such that
$\sF|_{U_i}$ is a constant sheaf for every $U_i$. The sheaf $\sF$ is
\emph{constructible} if there exists a finite stratification $|X|=\bigcup W_i$
into locally closed constructible subsets $W_i\subseteq |W|$ such that
$\sF|_{W_i}$ is
locally constant and finite~\cite[Exp.~IX, Prop.~2.4]{sga4}. Note that the
choice of scheme structure on the $W_i$'s is irrelevant since the \etale{}
sites of $W_i$ and $(W_i)_\red$ are equivalent.

Every constructible sheaf is represented by an algebraic space, \etale{} and
finitely presented over $X$~\cite[Exp.~IX, Prop.~2.7]{sga4}. In other words,
there is a one-to-one correspondence between constructible sheaves on $X$ and
finitely presented \etale{} morphisms $X'\to X$ of algebraic spaces given by
taking $X'$ to the corresponding sheaf of sections. Note that $X'$ is a scheme
if $X'\to X$ is separated~\cite[Cor.~6.17]{knutson_alg_spaces}. A constructible
sheaf is locally constant if and only if it is represented by a finite \etale{}
morphism.
\end{xpar}

\begin{xpar}
Let $X$ be a quasi-compact and quasi-separated \emph{stack} and let
$\map{\pi}{V}{X}$ be a presentation such that $V$ is a quasi-compact and
quasi-separated scheme (e.g., an affine scheme). Since $\pi$ is open,
surjective and quasi-compact, it follows that a subset $W\subseteq |X|$ is
locally closed (resp.\ locally closed and constructible) if and only if
$\pi^{-1}(W)$ is so. By definition, a sheaf of sets $\sF$ on the lisse-\etale{}
site of $X$, is locally constant (resp.\ constructible) if it is cartesian and
$\pi^{*}\sF$ is locally constant (resp.\ constructible)~\cite[D\'ef.\
18.1.4]{laumon}. This definition does not depend on the choice of
presentation. It follows, e.g., using local constructions as in~\cite[Ch.\
14]{laumon}, that the category of constructible sheaves on $X$ is equivalent to
the category $\Stack_{\repr,\fp,\metale/X}$ of representable finitely presented
and \etale{} morphisms $X'\to X$.
\end{xpar}

Surprisingly, the following result (closely related
to~\cite[Prop.~18.1.7]{laumon} and~\cite[Exp.~IX, Prop.~2.5]{sga4}) seems to be
missing in the literature.

\begin{proposition}\label{P:filtration-of-constructible}
Let $X$ be a quasi-compact and quasi-separated \emph{stack}.
\begin{enumerate}
\item Let $\sF$ be a lisse-\etale{} sheaf of sets on $X$. Then $\sF$ is
constructible if and only if there exists a finite filtration
$\emptyset=X_0\subset X_1\subset X_2\subset \dots \subset X_n=X$ of open
quasi-compact subsets such that $\sF|_{X_{i}\setminus X_{i-1}}$ is locally
constant of constant finite rank for every $i=1,2,\dots,n$.\label{PI:filtr:cons}
\item Let $\map{f}{X'}{X}$ be a representable \etale{} morphism.
Then $f$ is of finite presentation if and only if
there exists a filtration of $X$ as in \ref{PI:filtr:cons} such that
$f|_{X_{i}\setminus X_{i-1}}$ is finite and \etale{} of constant rank for every
$i=1,2,\dots,n$.
\label{PI:filtr:repr-fp-et}
\end{enumerate}
\end{proposition}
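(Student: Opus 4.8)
The plan is to reduce everything to the known scheme/algebraic-space statements recalled just before the proposition, namely that a representable finitely presented \'etale morphism corresponds to a constructible sheaf, and that over a quasi-compact quasi-separated \emph{scheme} a constructible sheaf admits the desired finite stratification into locally constant pieces (this is essentially \cite[Exp.~IX, Prop.~2.5]{sga4}). Since \ref{PI:filtr:repr-fp-et} is just the translation of \ref{PI:filtr:cons} under the equivalence between constructible sheaves on $X$ and $\Stack_{\repr,\fp,\metale/X}$ (and a constructible sheaf is locally constant of constant rank iff the corresponding \'etale morphism is finite \'etale of that rank, compatibly with restriction to locally closed substacks), it suffices to prove \ref{PI:filtr:cons}. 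The ``if'' direction is immediate: a sheaf which is locally constant of finite rank on each stratum of a finite stratification into constructible locally closed subsets is constructible by definition.

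For the ``only if'' direction I would first dispose of the purely set-theoretic bookkeeping: the filtration we seek is by \emph{open quasi-compact} substacks $X_0\subset\dots\subset X_n$, so that each difference $X_i\setminus X_{i-1}$ is locally closed and constructible, and $\sF$ restricted to it is locally constant of constant rank. Choose a presentation $\map{\pi}{V}{X}$ with $V$ a quasi-compact quasi-separated (say affine) scheme. By definition $\sF$ constructible means $\sF$ is cartesian and $\pi^*\sF$ is a constructible sheaf on $V$. First I would apply the scheme-level result to $V$: there is a finite stratification of $V$ into locally closed constructible subsets on which $\pi^*\sF$ is locally constant of constant rank. The task is then to \emph{descend} such a stratification to $X$ and to rearrange it into a filtration by opens. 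Because $\pi$ is open, surjective and quasi-compact, a subset $W\subseteq X$ is (locally closed, resp.\ constructible) iff $\pi^{-1}(W)$ is, so it is natural to look for the strata of $X$ directly; the ranks of $\sF$ and of $\pi^*\sF$ agree at corresponding points since $\pi^*\sF$ is the pullback, and local constancy of $\sF$ on a substack is equivalent to local constancy of $\pi^*\sF$ on its preimage (this is exactly the definition of locally constant on the lisse-\'etale site, applied to the induced presentation of the substack).

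The cleanest way to produce the filtration is by Noetherian-style induction on the (finite, by quasi-compactness) number of strata needed, reducing at each step to an \emph{open} stratum. Concretely: among the finitely many constructible locally closed subsets appearing in a stratification of $V$ adapted to $\pi^*\sF$, one can refine so that the "bottom" piece is a closed constructible subset whose complement $X_1\subseteq X$ is open and quasi-compact and on which $\sF|_{X_1}$ is locally constant of constant rank; equivalently, the locus where $\sF$ fails to be locally constant of its generic rank is a proper closed constructible subset $Z$, we let $X_1 = X\setminus Z$, and then recurse on $Z$ (with its reduced, or any, stack structure, which is again quasi-compact and quasi-separated, and on which the \'etale site, hence constructibility and local constancy of $\sF|_Z$, only depends on the underlying topological space). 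Pulling back to $V$ shows $Z$ is the image of a closed constructible subset of $V$, hence constructible, and the rank of $\sF$ on the open dense-in-its-closure strata is locally constant; iterating gives the finite chain $\emptyset=X_0\subset X_1\subset\dots\subset X_n=X$ with $\sF|_{X_i\setminus X_{i-1}}$ locally constant of constant finite rank. One must also note that $X_i$ can be taken quasi-compact: each $X_i$ is obtained from $X_{i-1}$ by adjoining a constructible open, and $X_{i-1}$ is quasi-compact by induction.

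The main obstacle is the descent step: turning the scheme-level stratification of $V$ into a \emph{canonical} (hence $\pi$-equivariant, hence descending) stratification of $X$. The right move is to characterize the strata intrinsically on $X$ rather than transport them: the set where $\sF$ is locally constant of rank $\ge r$, and the set where it is locally constant of rank exactly $r$, are well-defined subsets of $|X|$ whose preimages under $\pi$ are the corresponding subsets for $\pi^*\sF$; since those are constructible (resp.\ constructible and locally closed) in $V$ and $\pi$ is open, surjective and quasi-compact, they are constructible in $X$. This makes the stratification automatically independent of the presentation and automatically descends, so no explicit cocycle or equivariance computation is needed. The remaining points — that finitely many strata suffice (quasi-compactness), that each $X_i$ is open and quasi-compact, and that the translation to part \ref{PI:filtr:repr-fp-et} is valid (a constructible sheaf is locally constant of rank $r$ iff represented by a degree-$r$ finite \'etale morphism, and restriction of sheaves corresponds to base change of \'etale morphisms) — are routine given the dictionary recalled before the proposition and the fact, also recalled there, that over a separated base a representable finitely presented \'etale morphism is a scheme when needed. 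I would therefore structure the written proof as: (1) reduce \ref{PI:filtr:repr-fp-et} to \ref{PI:filtr:cons} via the equivalence with $\Stack_{\repr,\fp,\metale/X}$; (2) prove the trivial ``if'' direction; (3) for ``only if'', define the rank-$r$ loci on $|X|$, check they are constructible via a presentation and the scheme case; (4) run the induction to assemble the open quasi-compact filtration.
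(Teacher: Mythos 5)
Your reduction of \ref{PI:filtr:repr-fp-et} to \ref{PI:filtr:cons}, the trivial ``if'' direction, and the observation that constructibility and local constancy can be checked after pulling back along a presentation all agree with the paper. The gap is in your core inductive step. You assert that ``the locus where $\sF$ fails to be locally constant of its generic rank is a proper closed \emph{constructible} subset $Z$'', and more generally that the loci where $\sF$ is locally constant of rank $\geq r$ (resp.\ exactly $r$) are constructible; this is exactly what makes your $X_1=X\setminus Z$ quasi-compact, and it is never proved. It is in fact false without noetherian hypotheses: if $\injmap{j}{U}{V}$ is a quasi-compact open immersion of quasi-compact quasi-separated schemes whose closure $\overline{U}$ is not constructible (such $U$ exist on non-noetherian affine schemes), then $j_!$ of a finite non-empty constant sheaf is constructible, yet it is locally constant precisely on $V\setminus(\overline{U}\setminus U)$, which is not quasi-compact. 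The paper flags exactly this point in the remark immediately following the proposition: the canonical procedure ``take the maximal open on which $\sF$ is locally constant and recurse on the complement'' gives a finite filtration whose terms need not be quasi-compact when $X$ is not noetherian. Your alternative phrasing --- refine the stratification so that the ``bottom'' piece is closed constructible with open complement on which $\sF$ is locally constant of constant rank --- is not equivalent to the canonical-locus statement; it is precisely the assertion that has to be proved, and your noetherian-style recursion has no well-founded measure once the canonical rank loci are unavailable. (Note also that the open-filtration form of the scheme case is, as the paper says, not available off the shelf; the definition only gives a stratification into locally closed constructible pieces.)

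The paper's proof supplies the two ingredients you are missing, and both are simpler than a descent of canonical strata. First, no canonical or $\pi$-equivariant stratification is needed: one proves the statement for a presentation $V$ (a quasi-compact quasi-separated scheme) and simply sets $X_i=\pi(V_i)$, which is open and quasi-compact because $\pi$ is open and $V_i$ is quasi-compact; local constancy of constant rank on $X_i\setminus X_{i-1}$ is checked on the smooth cover by $V_i\cap\pi^{-1}(X_i\setminus X_{i-1})$, which lies inside $V_i\setminus V_{i-1}$. Second, on the scheme $V$ one writes each stratum as $W_i=U_i\setminus V_i$ with $U_i,V_i$ quasi-compact open, forms the \emph{finite} topology $T$ generated by all the $U_i$ and $V_i$, and inducts on $|T|$: a minimal non-empty element $X_1\in T$ is a quasi-compact open contained in a single stratum, so $\sF|_{X_1}$ is locally constant of constant rank, and one recurses on the closed complement with the induced, strictly smaller, topology. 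Some such combinatorial finiteness device, rather than the maximal rank loci, is what your argument needs in order to produce \emph{quasi-compact} opens.
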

\begin{proof}
The two statements are equivalent. We will show the Proposition in the
form~\ref{PI:filtr:cons}. The condition is clearly sufficient. To prove
necessity, let $\map{\pi}{V}{X}$ be a presentation with $V$ a quasi-compact
and quasi-separated scheme. If there exists a filtration
$\emptyset=V_0\subseteq V_1\subseteq \dots \subseteq V_n=V$ of open
quasi-compact subsets such that $\sF|_{V_{i}\setminus V_{i-1}}$ is locally
constant of constant finite rank, then the filtration of $X$ given by
$X_i=\pi(V_i)$ suffices. Replacing $X$ with $V$, we can thus assume that
$X$ is a scheme.

By definition, there is then a stratification $|X|=\bigcup W_i$ into locally
closed constructible subsets such that $\sF|_{W_i}$ is locally constant and by
refining the $W_i$'s, we can assume that the rank of $\sF|_{W_i}$ is constant.
Write $W_i=U_i\setminus V_i$ where $V_i\subseteq U_i\subseteq X$ are
quasi-compact open subsets. Let $T$ be the topology on $X$ generated by all
$U_i$'s and $V_i$'s. Then every element of $T$ is a quasi-compact open subset
and $T$ is finite. Let $N=|T|$ be the number of open subsets. We will
construct a filtration with elements of $T$ by induction on $N$.

Let $X_1\in T$ be a non-empty minimal open
subset. Then $\sF|_{X_1}$ is locally constant of constant rank. By induction,
we have a filtration $\emptyset=Z_1\subset Z_2\subset\dots\subset Z_n=Z=
X\setminus X_1$ with $Z_i\in T|_Z$ such that $\sF|_{Z_i\setminus Z_{i-1}}$
is locally constant of constant finite rank. As $X_1\cup Z_i\in T$,
we obtain a filtration as in the proposition by taking $X_i=X_1\cup Z_i$ for
$i=1,\dots,n$.
\end{proof}

\begin{remark}
In general, the stratification in
Proposition~\pref{P:filtration-of-constructible} is not canonical. There are
two important special cases though:
\begin{enumerate}
\item Let $X$ be a quasi-compact and quasi-separated stack and let
$\map{f}{X'}{X}$ be a \emph{separated} and quasi-compact \etale{}
morphism. Then the fiber rank of $f$ is a constructible and lower
semi-continuous function. Thus, there is a
\emph{canonical} finite filtration
$X=X_0\supseteq X_1\supseteq X_2\supseteq\dots\supseteq X_{n+1}=\emptyset$ of
$X$
into open quasi-compact subsets, such that $f$ is finite and \etale{} of
constant rank $i$ over the locally closed constructible subset $X_i\setminus
X_{i+1}$.

Similarly, if $f$ is \emph{universally closed} and finitely presented but not
necessarily separated, the fiber rank of $f$ is constructible and upper
semi-continuous and we obtain a canonical filtration.
\item Let $X$ be a \emph{noetherian} stack and let $\sF$ be a constructible
sheaf. Let $U$ be the maximal open subset such that $\sF|_{U}$ is locally
constant. We can then take $X_1$ as the open and closed subset of $U$ with
minimal fiber rank. Proceeding with $X\setminus X_1$ we obtain a canonical
filtration by noetherian induction. If $X$ is not noetherian, then this
procedure would also give a finite filtration but the $X_i$'s would not
necessarily be quasi-compact.
\end{enumerate}
\end{remark}

\end{section}


\begin{section}{\Etale{} \devissage{}}\label{S:etale-devissage}
In this section, we prove the \devissage{} theorem for representable finitely
presented \etale{}
morphisms. In the separated case, this \devissage{} was used by Raynaud and
Gruson
to pass from algebraic spaces to schemes~\cite[\S5.7]{raynaud-gruson}. We have
taken some care to also include the non-separated case. This is motivated by
Examples~\pref{E:etale-envelope} and~\pref{E:connected-fibration}. The starting
point is the existence of stratifications of finitely presented \etale{}
morphisms as in Proposition~\pref{P:filtration-of-constructible}.
The idea is to then use symmetric products to pass to \etale{} neighborhoods.

\begin{definition}
Let $\map{f}{X'}{X}$ be a representable morphism of stacks. We let $(X'/X)^d$
be the $d$\textsuperscript{th} fiber product of $X'$ over $X$ and we let the
symmetric group $\SG{d}$ act on $(X'/X)^d$ by permuting the factors. Let
$Z\subset |X|$ be a closed subset of $X$ such that $f|_{Z_\red}$ is
\emph{separated} and let
$Z'=f^{-1}(Z)$. Further let $\Delta_{Z'/Z}$ be the diagonal of $Z'\times_Z Z'$
as a closed subset of $|X'\times_X X'|$ and let $\Delta(Z)$ be the big
diagonal of $Z'$ in $(X'/X)^d$, i.e., the $\SG{d}$-orbit of
$\Delta_{Z'/Z}\times_X (X'/X)^{d-2}$. Then $\Delta(Z)$ is closed and we let
$\SEC^d_Z(X'/X)\subseteq (X'/X)^d$ be its complement. We let
$\ET^d_Z(X'/X)=[\SEC^d_Z(X'/X)/\SG{d}]$ be the stack quotient.
\end{definition}

\begin{remark}
The stack $\SEC^d_Z(X'/X)$ parameterizes $d$ sections of $\map{f}{X'}{X}$ 
such that these sections are disjoint over $Z$. The stack $\ET^d_Z(X'/X)$
parameterizes finite \etale{} morphisms $W\to X$ of rank $d$ together with
an $X$-morphism $W\to X'$ that is a closed immersion over $Z\inj X$.
If $f$ is separated, then we can form $\ET^d(X'/X):=\ET^d_X(X'/X)$ which is the
stack considered in~\cite[6.6]{laumon}. If $X'/X$ is \etale{} of constant rank
$d$, then $\ET^d(X'/X)\to X$ is an isomorphism.
\end{remark}

\begin{lemma}\label{L:ET-neighborhoods}
Let $\map{f}{X'}{X}$ be a representable \etale{} surjective morphism of
algebraic stacks, and let $Z\subset |X|$ be a closed subset such that
$f|_{Z_\red}$ is
finite of constant rank $d$.
\begin{enumerate}
\item The projections $\map{\pi_1,\pi_2,\dots,\pi_d}{\SEC^d_Z(X'/X)}{X'}$ are
\etale{} and surjective.
\item $\ET^d_Z(X'/X)\to X$ is a surjective \etale{} neighborhood of $Z$.
\item If $f$ is separated then
$\ET^d(X'/X)\to X$ is a representable and separated \etale{} neighborhood of
$Z$.
\end{enumerate}

\end{lemma}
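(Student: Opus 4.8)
The plan is to reduce everything to local statements over $X$ and then to the case where $X$ is a scheme where $f$ is a constant-rank finite \'etale cover near $Z$, handled by a direct computation with symmetric products. For (i), note that $\SEC^d_Z(X'/X) \subseteq (X'/X)^d$ is open, hence the projections $\pi_k$ are \'etale since the structure maps $(X'/X)^d \to X'$ are \'etale (being iterated base changes of $f$). For surjectivity of $\pi_k$, since everything is \'etale and surjectivity can be checked on points, I would pass to a geometric point $x$ of $X'$ lying over a point of $X$; if that point lies over $Z$ then the fiber of $f$ has exactly $d$ distinct points, so there are $d$ disjoint sections through the chosen one and $(X'/X)^d$ minus the big diagonal surjects; if it lies outside $Z$ there is no disjointness constraint at all over that point and the claim is immediate. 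This is the step I expect to be cleanest.

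For (ii), first observe $\ET^d_Z(X'/X) \to X$ is \'etale: the composite $\SEC^d_Z(X'/X) \to \ET^d_Z(X'/X) \to X$ is \'etale by (i) (it is $\pi_1$), and $\SEC^d_Z(X'/X) \to \ET^d_Z(X'/X)$ is a $\SG{d}$-torsor hence \'etale surjective, so $\ET^d_Z(X'/X) \to X$ is \'etale by \'etale descent of \'etaleness. Surjectivity of $\ET^d_Z(X'/X) \to X$ likewise follows from surjectivity of $\pi_1$. The heart of (ii) is showing it is an \emph{\'etale neighborhood} of $Z$, i.e. an isomorphism over $Z_\red$; by Lemma~\tref{L:etnbhd-chars} it suffices to check that for every field $k$ and every point $\map{x}{\Spec k}{X}$ landing in $Z$, the fiber $\ET^d_Z(X'/X)_x \to \Spec k$ is an isomorphism. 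Over such a point $f$ restricts to a finite \'etale morphism of rank $d$; base-changing to a finite separable extension $k'/k$ trivializing it, $X'_{x} \otimes_k k'$ is a disjoint union of $d$ copies of $\Spec k'$, so $\SEC^d_Z(X'/X)_{x,k'}$ is the set of orderings of these $d$ points, a $\SG{d}$-torsor over $\Spec k'$, and its quotient by $\SG{d}$ is $\Spec k'$. Descending along $k'/k$ gives $\ET^d_Z(X'/X)_x = \Spec k$. The main obstacle is organizing this fiberwise computation cleanly in the stacky setting: one should first remark that, since $\SEC^d_Z$, $\ET^d_Z$ and $\Delta(Z)$ are all defined by base change, the formation of $\ET^d_Z(X'/X)$ commutes with base change $T \to X$, which legitimizes passing to the fiber over $x$ and then to $k'$.

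For (iii), assume $f$ is separated, so $Z$ may be taken to be all of $X$ and $\ET^d(X'/X) = \ET^d_X(X'/X)$; parts (i) and (ii) already give that it is a surjective \'etale neighborhood of $Z = X$ (hence, via Lemma~\tref{L:etnbhd-chars}, of any closed subset, in particular of the originally given $Z$). It remains to see $\ET^d(X'/X) \to X$ is representable and separated. Since $f$ is separated, each structure map $(X'/X)^d \to X$ is separated, the big diagonal $\Delta(X) \subseteq (X'/X)^d$ is a closed subset whose complement $\SEC^d(X'/X)$ is a separated algebraic space \'etale over $X$ (being an open subscheme of an iterated fiber product of the separated algebraic space $X'$ over $X$), and the free $\SG{d}$-action — free precisely because we removed the diagonal — has quotient $\ET^d(X'/X)$ an algebraic space with a finite \'etale surjection from $\SEC^d(X'/X)$; representability over $X$ is then immediate and separatedness of $\ET^d(X'/X) \to X$ follows since its diagonal is covered, via the finite \'etale surjection, by the diagonal of $\SEC^d(X'/X) \to X$, which is a closed immersion. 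I expect no serious difficulty here beyond bookkeeping, the one point to be careful about being that freeness of the $\SG{d}$-action is exactly what the removal of $\Delta(X)$ buys us.
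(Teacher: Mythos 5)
Your treatments of (i) and (ii) are correct and amount to the same argument the paper has in mind (the paper compresses (ii) into the identity $\ET^d_Z(X'/X)|_Z=\ET^d(f^{-1}(Z)/Z)\iso Z$, which is exactly your fiberwise torsor computation over points of $Z$; the only slip is that the composite $\SEC^d_Z(X'/X)\to X$ is $f\circ\pi_1$, not $\pi_1$, which changes nothing).

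The genuine problem is in (iii), in the sentence ``$Z$ may be taken to be all of $X$; parts (i) and (ii) already give that $\ET^d(X'/X)\to X$ is a surjective \etale{} neighborhood of $Z=X$.'' Part (ii) has the hypothesis that $f|_Z$ is finite of constant rank $d$; for $Z=X$ this would require $f$ itself to be finite of rank $d$, which is not assumed (if it were, the whole lemma would be trivial). And the intermediate claim is simply false: take $X=\A{1}$, $Z=\{0\}$, $d=2$ and $X'=X\amalg X\amalg (X\setminus\{0\})$. Then $f$ is representable, separated, \etale{} and surjective and $f|_Z$ is finite of rank $2$, but $\ET^2(X'/X)\to X$ has three points in each fiber over $x\neq 0$, so it is not an isomorphism over $X_\red$, i.e.\ not an \etale{} neighborhood of $X$. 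Surjectivity can also fail (e.g.\ $X'=X\amalg(X\setminus\{1\})$, $Z=\{0\}$, $d=2$: the fiber over $x=1$ has a single point, so $\SEC^2(X'/X)$, hence $\ET^2(X'/X)$, is empty there); note that statement (iii), unlike (ii), deliberately does not assert surjectivity. So you cannot deduce ``\etale{} neighborhood of $Z$'' by first proving it for all of $X$ and then restricting via Lemma~\ref{L:etnbhd-chars}; you must prove it over the given $Z$ directly.

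The repair is short: since $\Delta(Z)\subseteq\Delta(X)$, one has an open inclusion $\SEC^d_X(X'/X)\subseteq\SEC^d_Z(X'/X)$, and over $Z$ the two coincide (over a point of $Z$ both conditions say exactly that the $d$ coordinates are pairwise distinct points of the rank-$d$ fiber). Hence your fiberwise computation from (ii) applies verbatim over points of $Z$ and shows $\ET^d(X'/X)\to X$ is an \etale{} neighborhood of the given $Z$; \etale{}ness of $\ET^d(X'/X)\to X$ is proved as in (ii). Your argument for representability and separatedness via freeness of the $\SG{d}$-action is the paper's argument and is fine, except that the pullback of $\Delta_{\ET^d(X'/X)/X}$ to $\SEC^d\times_X\SEC^d$ is the union of the graphs of all $\sigma\in\SG{d}$ (i.e.\ $\SEC^d\times\SG{d}$), not the diagonal of $\SEC^d$ alone; since each graph is a closed immersion ($f$ being separated) and there are finitely many, this map is finite, and separatedness of the quotient over $X$ follows.
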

\begin{proof}
That $\pi_i$ and $\ET^d_Z(X'/X)\to X$ are \etale{} and surjective follows from
the construction. We also have
$\ET^d_Z(X'/X)|_Z=\ET^d(f^{-1}(Z)/Z)\iso Z$
so that the second assertion holds. If $f$ is
separated then $\SG{d}$ acts freely on $\SEC^d_X(X'/X)$, relative to $X$, so
that $\ET^d(X'/X)\to X$ is representable.
\end{proof}

\begin{proof}[Proof of Theorem~\tref{T:DEVISSAGE}]
Let $\map{f}{X'}{X}$ be representable, \etale{} and surjective of finite
presentation such that $X'\in \catD$. Pick a filtration $\emptyset=X_0\subset
X_1\subset X_2\subset \dots \subset X_n=X$ of open quasi-compact subsets such
that $f|_{X_{i}\setminus X_{i-1}}$ is finite and \etale{} of constant rank
$d_i$ as in Proposition~\pref{P:filtration-of-constructible}. Let
$X'_i=f^{-1}(X_i)$. We will show that $X\in \catD$ by induction. Thus, let
$1\leq i\leq n$ and assume that $X_{i-1}\in \catD$.

Let $Z_i=X_i\setminus X_{i-1}$. If $f$ is separated, let
$\SEC_i=\SEC^{d_i}(X'_i/X_i)$ and $\ET_i=\ET^{d_i}(X'_i/X_i)$. If $f$ is not
separated, let $\SEC_i=\SEC^{d_i}_{Z_i}(X'_i/X_i)$ and
$\ET_i=\ET^{d_i}_{Z_i}(X'_i/X_i)$.
By Lemma~\pref{L:ET-neighborhoods} we have that $\ET_i\to X_{i}$ is an \etale{}
neighborhood of $Z_i$, that $\SEC_i\to \ET_i$ is a finite
\etale{} surjective morphism of rank $d_i!$ and that there is a finitely
presented \etale{} morphism $\SEC_i\to X'_i$. Moreover, if $f$ is separated,
then $\ET_i\to X_{i}$ is representable and separated.

By (D1) we have that $\SEC_i\in \catD$, by (D2) it follows that
$\ET_i\in \catD$ and by (D3) we have that $X_i\in \catD$. It follows that
$X\in \catD$ by induction.
\end{proof}

\begin{remark}[Sheaf interpretation]
Condition (D1) of Theorem~\tref{T:DEVISSAGE} states that $\catD$ is a sieve on
$\catE$, i.e., a presheaf $\catE^{\op}\to \{\emptyset,\{\ast\}\}$. Conditions
(D2) and (D3) signify that this presheaf satisfies the sheaf condition with
respect to finite and surjective \etale{} morphisms and with respect to
coverings of the form $(U\to X,\;X'\to X)$ where $U\to X$ is an open immersion
and $X'\to X$ is an \etale{} neighborhood of $X\setminus U$. The conclusion of
Theorem~\tref{T:DEVISSAGE} is that the presheaf satisfies the sheaf condition
with respect to representable \etale{} coverings. Theorem~\tref{T:DEVISSAGE}
can be generalized to arbitrary presheaves.
\end{remark}

\begin{remark}[Nisnevich topology]\label{R:Nisnevich}
A Nisnevich covering of a noetherian scheme $X$ is a surjective family of
\etale{} morphisms $\map{p_i}{X_i}{X}$ such that every point
$\map{x}{\Spec \kappa(x)}{X}$ admits a lifting to some $X_i$. The Nisnevich
topology is the topology associated to the pretopology of Nisnevich coverings.
A well-known and easy result is that the Nisnevich topology is generated by
coverings of the form $(U\to X,\;X'\to X)$ where $U\to X$ is an open immersion
and $X'\to X$ is an \etale{} neighborhood of $X\setminus U$.

It immediately follows from Theorem~\tref{T:DEVISSAGE} that the \etale{}
topology on a scheme is the topology generated by the finite \etale{} topology
and the Nisnevich topology. Indeed, if $\catD\subseteq \catE$ is a covering
sieve in the \etale{} topology, then the sheafification of $\catD$ in the
topology generated by the finite \etale{} topology and the Nisnevich topology
is $\catE$ by Theorem~\tref{T:DEVISSAGE} so that $\catD$ is covering in this
topology.
\end{remark}

We end this section with some examples:

\begin{example}[{cf.\ \cite[5.7.6]{raynaud-gruson}}]
Let $X$ be a quasi-compact and quasi-separated algebraic space. Then there
exists an affine scheme $X'$ and an \etale{} presentation
$\map{f}{X'}{X}$. Since $f$ is separated, the fiber rank of $f$ is a lower
semi-continuous constructible function. Thus, there is a \emph{canonical}
filtration $\emptyset=X_{n+1}\subseteq X_n\subseteq X_{n-1}\subseteq\dots\subseteq
X_1=X$ of quasi-compact open substacks $X_i$ such that $f|_{X_d\setminus
X_{d+1}}$ is finite of constant rank $d$. Let $\ET_d=\ET^d(X'_d/X_d)$ so that
$\ET_d\to X_d$ is a representable, separated and surjective \etale{}
neighborhood of $X_d\setminus X_{d+1}\inj X_d$. As $\ET_d$ is the quotient of
the quasi-affine scheme $\SEC^d(X'_d/X_d)$ by a \emph{free} group action, it is
a
quasi-affine scheme by Lemma~\pref{L:finite-flat-pres-of-alg-space:aff/qaff}.
In particular, we have that $X_d\setminus X_{d+1}$ is a quasi-affine scheme.
\end{example}


\begin{example}[{\cite{rydh_embeddings-of-unramified}}]\label{E:etale-envelope}
Let $\map{f}{X}{Y}$ be an unramified morphism of algebraic stacks. Then there
is a canonical factorization $X\inj E_{X/Y}\to Y$ of $f$ where $X\inj E_{X/Y}$
is a closed immersion and $\map{e}{E_{X/Y}}{Y}$ is \etale{}. The \etale{}
morphism $e$ is almost never separated but $e$ is at least universally
closed if $f$ is finite. If $f$ is representable and of finite presentation,
then so is $e$. The \devissage{} method can thus be extended to treat
representable
finitely presented unramified morphisms.
\end{example}

\begin{example}[{\cite[6.8]{laumon} and \cite[Thm.~2.5.2]{romagny_components-in-families}}]\label{E:connected-fibration}
Let $\map{f}{X}{Y}$ be a flat and finitely presented morphism with geometrically
reduced fibers (e.g., $f$ smooth). Then $f$ has a
canonical factorization $X\to \pi_0(X/Y)\to Y$ where $X\to \pi_0(X/Y)$
is surjective with geometrically connected fibers and
$\map{\pi}{\pi_0(X/Y)}{Y}$ is \etale{} and representable. If $f$ is
\etale{} and representable then $f=\pi$ so that $\pi$ is separated if and only
if $f$ is
separated. On the other hand, there are examples where $f$ is smooth and
separated but $\pi$ is not separated.
\end{example}

\end{section}


\begin{section}{Quasi-finite flat \devissage{}}\label{S:qff-devissage}
Let $S$ be an algebraic stack. We let $\Stack_{\fp,\qff/S}$ denote the category
of quasi-finite and flat morphisms $X\to S$ of finite presentation and let
$\Stack_{\repr,\sep,\fp,\qff/S}$ denote the subcategory of representable and
separated morphisms. The main result of this section is the following theorem:

\begin{theorem}\label{T:qff-devissage}
Let $S$ be a quasi-compact and quasi-separated algebraic stack and let $\catF$
be either $\Stack_{\fp,\qff/S}$ or $\Stack_{\repr,\sep,\fp,\qff/S}$. Let
$\catD\subseteq \catF$ be a full subcategory such that
\renewcommand{\theenumi}{{\upshape{(D\arabic{enumi})}}}
\begin{enumerate}
\item If $X\in \catD$ and $(X'\to X)\in \catF$ is \etale{} then $X'\in \catD$.
\item If $X'\in \catD$ and $(X'\to X)\in \catF$ is finite and surjective,
  then $X\in \catD$.
\item If $\map{j}{U}{X}$ and $\map{f}{X'}{X}$ are morphisms in $\catF$ such that
  $j$ is an open immersion and $f$ is an \etale{} neighborhood of $X\setminus
  U$, then $X\in \catD$ if $U,X'\in \catD$.
\end{enumerate}
Then if $(X'\to X)\in \catF$ is representable, locally separated and
surjective and $X'\in \catD$, we have that $X\in \catD$.
%
\end{theorem}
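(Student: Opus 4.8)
The plan is to deduce the statement from the \etale{} devissage, Theorem~\tref{T:DEVISSAGE}, by means of \etale{} localization of quasi-finite morphisms. Write $\map{f}{X'}{X}$ for the given representable, locally separated and surjective morphism in $\catF$; since $X\to S$ lies in $\catF$ and $S$ is quasi-compact and quasi-separated, so is $X$. The idea is that a quasi-finite flat morphism, \etale{}-locally on $X$ near any point, splits off a finite flat morphism; condition (D2) then promotes that finite piece to the \etale{} neighborhood, and afterwards Theorem~\tref{T:DEVISSAGE} --- which may legitimately be used here, as we are now proving a different statement --- descends from an \etale{} cover of $X$ to $X$ itself.

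First I would fix a point $\map{z}{\Spec k}{X}$. By \etale{} localization of quasi-finite morphisms (see below for how to obtain this when $X$ is a stack and $f$ is only locally separated) there is a representable, separated, finitely presented \etale{} morphism $\map{g_z}{\tilde X_z}{X}$ whose image contains $z$, a point $\tilde z\in\tilde X_z$ over $z$, and an open-and-closed decomposition $X'\times_X\tilde X_z=V_z\amalg V_z^\circ$ in which $V_z\to\tilde X_z$ is finite and $V_z^\circ$ has empty fibre over $\tilde z$. Since $X'\times_X\tilde X_z\to X'$ is \etale{} and of finite presentation, condition (D1) gives $X'\times_X\tilde X_z\in\catD$, hence $V_z\in\catD$. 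The morphism $V_z^\circ\to\tilde X_z$ is flat and of finite presentation, so its image is open, and it omits $\tilde z$; the fibre of $V_z$ over $\tilde z$, on the other hand, is the whole fibre of $X'\times_X\tilde X_z$ over $\tilde z$, which is non-empty as $f$ is surjective. I may therefore shrink $\tilde X_z$ to a quasi-compact open neighborhood of $\tilde z$ lying in the (open) image of $V_z$. After this shrinking $V_z\to\tilde X_z$ is finite \emph{and} surjective and lies in $\catF$, so condition (D2) yields $\tilde X_z\in\catD$.

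To assemble these I would first record that $\catD$ is stable under finite disjoint unions: if $Y_1,Y_2\in\catD$, apply (D3) to the open immersion $Y_1\inj Y_1\amalg Y_2$ and the open-and-closed immersion $Y_2\inj Y_1\amalg Y_2$ --- the latter being trivially an \etale{} neighborhood of the complement of $Y_1$ --- and then induct on the number of summands. Since $X$ is quasi-compact, finitely many of the $g_z$, say those for $z_1,\dots,z_m$, have images covering $X$; thus $\map{g=\coprod_j g_{z_j}}{Y:=\coprod_j\tilde X_{z_j}}{X}$ is representable, \etale{}, of finite presentation and surjective, is separated when $\catF=\Stack_{\repr,\sep,\fp,\qff/S}$, and satisfies $Y\in\catD$. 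Finally, apply (the proof of) Theorem~\tref{T:DEVISSAGE} with base $X$ in place of $S$ and with the full subcategory of $\Stack_{\fp,\metale/X}$ (resp.\ $\Stack_{\repr,\sep,\fp,\metale/X}$) of objects that lie in $\catD$: conditions (D1)--(D3) for $\catF$ restrict to the hypotheses of that theorem for this subcategory --- they are in fact stronger, since (D2) for $\catF$ applies to \emph{all} finite surjective morphisms --- and $g$ exhibits $Y\in\catD$ as a representable surjective \etale{} cover of $X$. Hence $X\in\catD$.

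The delicate step is supplying the \etale{}-localization input in the required generality: the classical statement concerns separated quasi-finite morphisms of schemes, whereas here $X$ may be an algebraic stack and $f$ is only representable and locally separated. I would obtain it by base-changing along a smooth presentation $\map{p}{V}{X}$ with $V$ a scheme, applying \etale{} localization to the locally separated, quasi-finite, flat and finitely presented morphism $X'\times_X V\to V$ from an algebraic space to a quasi-compact and quasi-separated scheme --- the finiteness and separatedness of the neighborhoods $\tilde X_z$ and of the pieces $V_z$ being available because $f$ is finitely presented and $X$ is quasi-compact and quasi-separated --- and then descending the resulting \etale{} neighborhood along $p$ using Theorem~\tref{T:ETNBHD-DESCENT} and Corollary~\pref{C:effective-etnbhd-descent-of-stacks}. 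The remaining arguments are then the same routine bookkeeping with \etale{} neighborhoods and finite surjective morphisms that underlies the proof of Theorem~\tref{T:DEVISSAGE}.
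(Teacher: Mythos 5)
Your overall strategy coincides with the paper's: both reduce the quasi-finite flat statement to the \etale{} devissage of Theorem~\tref{T:DEVISSAGE} (applied over the base $X$) by first producing, \etale{}-locally \emph{on $X$}, a finite surjective piece of $X'$ and then invoking (D1), (D2) and (D3). The bookkeeping part of your argument --- splitting off $V_z$, shrinking $\tilde X_z$ into the image of $V_z$, closing $\catD$ under finite disjoint unions via (D3), and running Theorem~\tref{T:DEVISSAGE} with base $X$ on the subcategory of $\Stack_{\fp,\metale/X}$ of objects lying in $\catD$ --- is sound and is essentially how the paper deduces Theorem~\pref{T:qff-devissage} from its structure result, Theorem~\pref{T:qff:finite}.

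The genuine gap is the input you yourself flag as the ``delicate step'': the existence, for every point of $X$, of a representable, separated, finitely presented \etale{} morphism $\tilde X_z\to X$ over which $X'$ splits off a finite part. Your proposed justification --- pull back along a smooth presentation $\map{p}{V}{X}$, apply classical \etale{} localization of quasi-finite morphisms to $X'\times_X V\to V$, and then ``descend the resulting \etale{} neighborhood along $p$ using Theorem~\tref{T:ETNBHD-DESCENT} and Corollary~\pref{C:effective-etnbhd-descent-of-stacks}'' --- does not work. Those two results concern gluing along a pair (open immersion, \etale{} neighborhood of the complement); they say nothing about descending objects along a smooth surjection $V\to X$. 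To descend an \etale{} neighborhood $\tilde V\to V$ of a point of $V$ to an \etale{} morphism over $X$ you would need a descent datum relative to $V\times_X V\rightrightarrows V$, which an ad hoc \etale{} neighborhood simply does not carry; and composing $\tilde V\to V$ with $p$ only yields a \emph{smooth}, not \etale{}, morphism to $X$, so for genuinely stacky $X$ (e.g.\ $X=BG$ with $G$ smooth non-\etale{}, which occurs here since $S$ is an arbitrary quasi-compact quasi-separated stack) this produces nothing usable. Making the local splitting canonical enough to exist \etale{}-locally on $X$ itself is exactly the content of the paper's Theorem~\pref{T:qff:finite}, whose proof is explicitly ``much more subtle'': it constructs a \emph{global, functorial} \etale{} morphism to $X$ via Hilbert stacks $\HilbSt^{\metale}_k$ carrying the universal finite flat family (and, in the representable locally separated case, verifies surjectivity of the representable locus over strictly henselian local rings using Lemma~\pref{L:qf+loc-sep-over-strictly-local}). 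So your proposal presupposes, rather than proves, the key statement; as written the argument only covers the case where $X$ admits an \etale{} presentation by a scheme (e.g.\ $X$ an algebraic space or Deligne--Mumford with separated diagonal), where no descent is needed.
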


Note that the only difference between the conditions of
Theorem~\tref{T:DEVISSAGE} and Theorem~\pref{T:qff-devissage} is that in
the second condition $X'\to X$ is only required to be flat, not merely \etale{}.

\begin{remark}\label{R:qff-devissage-of-qf-stack}
Let $S$ be a quasi-compact and quasi-separated algebraic stack with
quasi-finite diagonal. Then $S$ admits a quasi-finite flat presentation
$S'\to S$ by Theorem~\pref{T:qf-cover-of-Artin}. If in addition $S$ has
\emph{locally separated}
diagonal, then we can arrange so that $S'\to S$ is locally separated. If
$S'\in \catD$
we can then apply Theorem~\pref{T:qff-devissage} to deduce that $S\in \catD$.
\end{remark}

Theorem~\pref{T:qff-devissage} is an immediate corollary of
Theorem~\tref{T:DEVISSAGE} and the following result about the \etale{}-local
structure of quasi-finite morphisms.

\begin{theorem}\label{T:qff:finite}
Let $\map{f}{X}{Y}$ be a quasi-finite flat morphism of finite
presentation between algebraic stacks such that $\Delta_f$ is unramified (e.g.,
$f$ representable). Then there exists a commutative diagram
$$\xymatrix{X'\ar[d]_{f'}\ar[r] & X\ar[d]_{f}\\
Y'\ar[r] & Y\ar@{}[ul]|\circ}$$
where the horizontal morphisms are \etale{} and quasi-separated,
where $X'\to X$ is surjective and where $\map{f'}{X'}{Y'}$ is finite, flat and
of finite presentation. Moreover,
\begin{enumerate}
\item If $f$ is representable and separated, we can arrange so that
the horizontal morphisms are representable, separated and of finite
presentation.
\item If $f$ is representable and locally separated, we can arrange so that
the horizontal morphisms are representable.
\item\label{TI:qff-f:fp}
If $Y$ is quasi-compact and quasi-separated, we can arrange so that the
horizontal morphisms are of finite presentation.
\end{enumerate}
\end{theorem}

Note that if $Y$ is a quasi-compact Deligne--Mumford stack with quasi-compact
and separated diagonal, then the result follows from the well known case where
$X$ and $Y$ are schemes. The proof in the general case is more subtle
and is inspired by Keel and Mori's usage of Hilbert
schemes~\cite[\S4]{keel_mori_quotients}.

\begin{proof}
Let us first assume that $f$ is representable and separated. The \etale{} sheaf
$f_!\left(\underline{\Z/2\Z}_X\right)$ is \emph{constructible} and hence
represented by a finitely presented \etale{} morphism $Y'\to Y$. We let
$X'\subset X\times_Y Y'$ be the support of the universal section. This is an
open and closed subset so that $X'\to X$ is finitely presented and \etale{}.
By the definition of $f_!$ we have that $\map{f'}{X'}{Y'}$ is proper and hence
finite. That
$X'\to X$ is surjective can be checked after passing to fibers of $f$ since
$f_!$ commutes with arbitrary base change.

To see that $Y'\to Y$ is separated, we describe $Y'$ as a Hilbert scheme. Let
$\Hilb^{\mopen}_k(X/Y)$ be the open subscheme of the relative Hilbert scheme of
$k$ points on $X/Y$ parameterizing open and closed subschemes. That is, for any
scheme $T$ and morphism $T\to Y$, the $T$-points of $\Hilb^{\mopen}_k(X/Y)$ are
in bijection with open and closed subschemes $Z\inj X\times_Y T$ such that
$Z\to T$ is flat and finite of constant rank $k$. Then $Y'=\coprod_{k\geq 0}
\Hilb^{\mopen}_k(X/Y)$ so that $Y'\to Y$ is separated.

We now drop the assumption that $f$ is representable and separated. It is still
possible to
define $f_!\left(\underline{\Z/2\Z}_X\right)$ but it does not carry a universal
section. Instead, consider the \emph{Hilbert stack} $\HilbSt^{\metale}_k(X/Y)$
parameterizing flat families $Z\to T$ of constant rank $k$ with an \etale{}
morphism $Z\to X\times_Y T$. The Hilbert stack $\HilbSt^{\metale}_k(X/Y)$ is an
open substack of the full Hilbert stack $\HilbSt_k(X/Y)$ which is known to be
algebraic and of finite presentation~\cite[Thm.~4.4]{rydh_hilbert}.
It is readily verified that $\HilbSt^{\metale}_k(X/Y)\to Y$ is \etale{}.
We let $\HilbSt=\coprod_{k\geq 0} \HilbSt^{\metale}_k(X/Y)$. In the general
case we can then let $Y'=\HilbSt$ and let $X'\to Y'$ be the universal family.
To verify that $X'\to X$ is surjective, we can assume that $Y$ is the spectrum
of a field so that $X$ is a Deligne--Mumford stack. It is then obvious that
$X'\to X$ is surjective since $X$ locally admits an \etale{} presentation
$Z\to X$ with $Z\to X\to Y$ finite and flat.

If $f$ is representable and \emph{locally separated}, then we let $Y'$ be the
largest
open substack of $\HilbSt$ such that $Y'\to Y$ is representable and let $X'$ be
the restriction of the universal family to $Y'$. It remains to verify that
$X'\to X$ is surjective.

The diagonal of $\HilbSt$, relative to $Y$, is of finite presentation
(and \etale{} and
separated) so the locus $R\subseteq |\HilbSt|$ where the inertia stack
$I_{\HilbSt/Y}\to \HilbSt$ is an
isomorphism is a constructible subset (and a closed subset but we do not use
this). Since $Y'$ is the interior of $R$, it follows that the construction of
$Y'$ commutes with flat base change on $Y$ and that a point
$\map{h}{\Spec k}{\HilbSt}$ is in $Y'$ if and only if the fibers of
$I_{\HilbSt/Y}\to
\HilbSt$ over $h$ and \emph{its generizations} have rank $1$~\cite[Thm.~7.3.1
and Prop.~7.3.3]{egaI_NE}.

To show that a point $\map{x}{\Spec k}{X}$ is in the image of $X'\to X$, we
can thus assume that $Y$ is the spectrum of a strictly henselian local
ring and that $x$ lies in the fiber of the closed point of $Y$. Then by
Lemma~\pref{L:qf+loc-sep-over-strictly-local} we have that $x$ lies in an
open subscheme $Z\subseteq X$ that is finite over $Y$. The family $Z\to Y$
induces a morphism $Y\to Y'\subset \HilbSt$ so that $x$ is in the image
of $X'\to X$.

Finally to show~\ref{TI:qff-f:fp} it is enough to replace $Y'$ with a
quasi-compact open
substack and $X'$ with its inverse image.
\end{proof}


%

\end{section}


\begin{section}{Stacks with quasi-finite diagonals}\label{S:qf-stacks}
In this section we show that every stack with quasi-finite diagonal has a
locally quasi-finite flat presentation. The main purpose of this result is to
show that the quasi-finite flat \devissage{}, Theorem~\pref{T:qff-devissage},
can
indeed be applied to a presentation of a stack with quasi-finite and locally
separated diagonal as mentioned in Remark~\pref{R:qff-devissage-of-qf-stack}.
We also combine this result with Theorem~\pref{T:qff:finite}
and deduce that stacks with quasi-finite diagonals
admit finite flat presentations \etale{}-locally.

\begin{theorem}[Quasi-finite presentations]\label{T:qf-cover-of-Artin}
Let $X$ be an algebraic stack with quasi-finite diagonal. Then there is
a locally quasi-finite flat presentation $U\to X$ with $U$ a scheme.
\end{theorem}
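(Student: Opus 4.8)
The plan is to take an arbitrary smooth presentation of $X$ and cut it down, fibrewise, to a locally quasi-finite flat one; the hypothesis that $\Delta_X$ is quasi-finite enters precisely as the statement that there is enough room in the presentation to carry out the cutting.

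I would begin by choosing a smooth presentation $P\to X$ with $P$ a scheme (say a disjoint union of affines); after shrinking $P$ we may assume this morphism has constant relative dimension $n$, and it then suffices to produce, for each $p\in P$ with image $x\in|X|$, a locally closed subscheme $Z_p\subseteq P$ containing $p$ such that $Z_p\to X$ is flat, quasi-finite and of finite presentation. Since $\Delta_X$ is quasi-finite, $X$ is quasi-separated and the point $x$ is algebraic (Appendix~\ref{A:alg-points}), so the residual gerbe $\mathcal{G}_x\hookrightarrow X$ exists as a locally closed immersion, and its automorphism group $G_x$ over $k(x)$ is a finite group scheme (being quasi-finite over a field). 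Hence $P_{\mathcal{G}_x}:=P\times_X\mathcal{G}_x$ is a locally closed subscheme of $P$, and after a suitable finite flat surjective base change $\Spec k'\to\mathcal{G}_x$ it becomes the fibre of $P\to X$ over $x$, which is smooth over $k'$ of dimension $n$. By faithfully flat descent, $P_{\mathcal{G}_x}$ is Cohen--Macaulay, and $\dim_p P_{\mathcal{G}_x}=n$.

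Since $P_{\mathcal{G}_x}\hookrightarrow P$ is an immersion, the surjection $\mathcal{O}_{P,p}\twoheadrightarrow\mathcal{O}_{P_{\mathcal{G}_x},p}$ lets me pick $t_1,\dots,t_n\in\mathfrak{m}_{P,p}$ whose images form a system of parameters of the $n$-dimensional local ring $\mathcal{O}_{P_{\mathcal{G}_x},p}$. Put $Z:=V(t_1,\dots,t_n)$ in a neighbourhood of $p$, so $Z\to X$ is of finite presentation. Flatness and quasi-finiteness of $Z\to X$ at $p$ may both be checked after pulling back along the atlas $P\to X$, where they reduce to the corresponding properties of the pullback, along a finite flat surjection, of the functions $t_i|_{P_{\mathcal{G}_x}}$; as these form a system of parameters on the Cohen--Macaulay scheme $P_{\mathcal{G}_x}$ they are a regular sequence cutting out a $0$-dimensional subscheme, which gives both. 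This is exactly where quasi-finiteness of $\Delta_X$ is used: it forces $\dim_p P_{\mathcal{G}_x}$ to equal the relative dimension $n$, so that $n$ equations already bring the fibre down to dimension $0$; if $\Delta_X$ had a positive-dimensional fibre at $x$, then $P_{\mathcal{G}_x}$ would have dimension $<n$ and no such $Z$ could exist (e.g.\ $X=B\mathbb{G}_m$). Shrinking $Z$ using the openness of the flat locus and of the quasi-finite locus produces the required $Z_p$.

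Finally, $U:=\coprod_{p\in|P|}Z_p$ is a scheme and $U\to X$ is flat and locally of finite presentation, with discrete fibres (hence locally quasi-finite), representable by algebraic spaces (since the diagonal of an algebraic stack is representable), and surjective because $p\in Z_p$ for every $p$ while $P\to X$ is surjective. Thus $U\to X$ is the asserted locally quasi-finite flat presentation. The only delicate point is the fibrewise cutting in the third paragraph --- isolating the Cohen--Macaulay ``gerbe fibre'' $P_{\mathcal{G}_x}$ and checking that lifting a system of parameters on it yields a morphism to $X$ that is simultaneously flat and quasi-finite; everything else is formal.
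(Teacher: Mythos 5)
Your argument is, in outline, the paper's own slicing argument, but two steps fail as written. The decisive one is the claim that $\mathcal{O}_{P_{\mathcal{G}_x},p}$ is an $n$-dimensional local ring for \emph{every} $p\in P$ over $x$: the equality $\dim_p P_{\mathcal{G}_x}=n$ only says the gerbe-fibre has dimension $n$ near $p$, whereas $\dim\mathcal{O}_{P_{\mathcal{G}_x},p}$ is the codimension of $\overline{\{p\}}$ in that fibre, which is $<n$ whenever $p$ is not closed in the fibre. At such points the statement you want is simply false: for $X=\Spec(k)$, $P=\mathbb{A}^n_k$ and $p$ the generic point, any locally closed $Z\subseteq P$ through $p$ is a nonempty open of $\mathbb{A}^n_k$, so no quasi-finite flat $Z_p\ni p$ exists when $n\geq 1$. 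Hence your surjectivity argument (``$p\in Z_p$ for every $p$'') collapses. The repair is exactly the paper's choice: index the construction by points $\xi\in|X|$ rather than by points of the presentation, and slice at a \emph{closed} point $v$ of the fibre over $\xi$ at which the fibre is Cohen--Macaulay; surjectivity then only needs each $\xi$ to lie in the image of its own slice.

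Secondly, the residual gerbe $\mathcal{G}_x\to X$ is in general only a quasi-affine monomorphism (this is what Theorem~\ref{T:algebraicity-of-points} gives), not a locally closed immersion, since $x$ need not be locally closed in $|X|$; so $P_{\mathcal{G}_x}\to P$ is not an immersion and the surjection $\mathcal{O}_{P,p}\twoheadrightarrow\mathcal{O}_{P_{\mathcal{G}_x},p}$ you use to lift the parameters does not exist as stated (the target is only a localization of a quotient; one can clear denominators, but better is to do what the paper does: use the immersion $Z\hookrightarrow X$ with $Z$ a gerbe over an integral scheme $\underline{Z}$ and slice relative to $\underline{Z}$). Relatedly, the one-sentence verification that the slice is flat and quasi-finite over $X$ hides the real use of the quasi-finite diagonal: in the paper one first checks flatness and quasi-finiteness over the \emph{scheme} $\underline{Z}$ via EGA~IV~11.3.8 and 13.1.4, then transfers them to $Z$ using that $\Delta_{Z/\underline{Z}}$ is flat and quasi-finite, and finally applies 11.3.8/13.1.4 again to $W\subseteq V\to X$. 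Your reduction along the atlas can be made to work, but the regular-sequence condition must then be checked on the fibres $P\times_X\Spec(k(q))$ of $P\times_X P\to P$, which are only flat covers of $P_{\mathcal{G}_x}$ (flatness of $\Spec(k(q))\to\mathcal{G}_x$ coming from the finiteness of $G_x$), and this again requires $\dim\mathcal{O}_{P_{\mathcal{G}_x},p}=n$, i.e.\ the closed-point choice above.
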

\begin{proof}
It is enough to construct for every point $\xi\in |X|$ a locally quasi-finite
flat morphism $\map{p}{U}{X}$ locally of finite presentation with $U$ a scheme
such that $\xi\in p(U)$. Choose an immersion $Z\inj X$ as in
Theorem~\pref{T:algebraicity-of-points} so that $\xi\in|Z|$, $I_Z\to Z$ is
flat and locally of finite presentation and $Z$ is an fppf
gerbe over a scheme
$\underline{Z}$. The diagonal of $Z\to \underline{Z}$ is quasi-finite, flat and
locally of finite presentation. This follows from the diagram
$$\xymatrix{%
I_Z\ar[r]\ar[d] & Z\ar[d]^{\Delta_{Z/\underline{Z}}}\\
Z\ar[r]^-{\Delta_{Z/\underline{Z}}} & Z\times_{\underline{Z}} Z}$$
since the diagonal is covering in the fppf topology.

Let $V\to X$ be a flat (or smooth) presentation of $X$ with $V$ a scheme. Then
$V\times_X Z\to Z\to \underline{Z}$ is flat. Let $\underline{\xi}\in
\underline{Z}$ be the image of $\xi$. We will now do a standard slicing
argument,
cf.~\cite[Prop.~17.16.1]{egaIV}. Let $v$ be a closed point in the fiber
$V_{\xi}:=V\times_X \stG_\xi=
V\times_X Z\times_{\underline{Z}} \Spec \kappa(\underline{\xi})$
at which the fiber is Cohen--Macaulay. Let
$f_1,f_2,\dots,f_n$ be a regular sequence in $\sO_{V_{\xi},v}$ such
that the quotient is artinian. Since $\sO_{V,v}\to \sO_{V_{\xi},v}$
is surjective, we can lift this sequence to a sequence $g_1,g_2,\dots,g_n$ of
global
sections of $\sO_V$ after replacing $V$ with an open neighborhood of $v$.

Let $W\inj V$ be the closed subscheme defined by the ideal
$(g_1,g_2,\dots,g_n)$. Since $f_1,f_2,\dots,f_n$ is regular, it follows
by~\cite[Thm.~11.3.8]{egaIV} that $W\times_X Z\to Z\to\underline{Z}$ is flat in a
neighborhood of $v$. After shrinking $V$ we can thus assume that
$W\times_X Z\to Z\to\underline{Z}$ is flat. Since $W\times_X Z\to
\underline{Z}$ is quasi-finite at $v$, we can also assume that $W\times_X Z\to
Z\to\underline{Z}$ is quasi-finite after further shrinking
$V$~\cite[Cor.~13.1.4]{egaIV}.

Now, since the diagonal of $Z\to \underline{Z}$ is flat and quasi-finite, it
follows that $W\times_X Z\to Z$ is flat and quasi-finite.
Finally, we apply~\cite[11.3.8 and 13.1.4]{egaIV} on $W\inj V\to X$ to deduce
that $W\to X$ is flat and quasi-finite in an open neighborhood of $v\in W$.
\end{proof}

\begin{theorem}[Finite flat presentations]
\label{T:qcqf-stacks:finite-presentations}
Let $X$ be an algebraic stack. The following are equivalent
\begin{enumerate}
\item\label{TI:qfin-diag}
$X$ is quasi-compact and quasi-separated with quasi-finite
(resp.\ quasi-finite and locally separated, resp.\ quasi-finite and separated)
diagonal.
\item\label{TI:qfin-pres}
There exists a quasi-finite flat presentation $\map{p}{U}{X}$ with $U$
affine and such that $p$ is finitely presented (resp.\ finitely presented
and locally separated, resp.\ finitely presented and separated).
\item\label{TI:fin+etale-pres}
There exists an \etale{} (resp.\ representable \etale{}, resp.\ representable,
separated and \etale{}) surjective morphism $X'\to X$ of finite presentation
such that $X'$ admits a finite flat presentation $V\to X'$ with $V$ a
quasi-affine scheme.
\end{enumerate}
\end{theorem}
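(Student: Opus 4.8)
The plan is to prove $(i)\Rightarrow(ii)\Rightarrow(i)$ and $(ii)\Leftrightarrow(iii)$, so that all three are equivalent. The implication $(i)\Rightarrow(ii)$ is essentially Theorem~\pref{T:qf-cover-of-Artin}: it produces a locally quasi-finite flat presentation $V\to X$ with $V$ a scheme, and since $X$ is quasi-compact one may replace $V$ first by a quasi-compact open and then, covering by finitely many affines and taking a disjoint union, by an affine scheme $U$; the resulting $p\colon U\to X$ is quasi-finite and flat, and of finite presentation because $U$ and $X$ are quasi-compact and quasi-separated. For the separation refinements, note that $U\times_X U\to U\times U$ is a base change of $\Delta_X$ along the fppf cover $U\times U\to X\times X$, hence is separated (resp.\ locally separated) when $\Delta_X$ is; since $\Delta_U\colon U\to U\times U$ is a closed immersion, a cancellation argument then shows $\Delta_p=\Delta_{U/X}$ is a closed immersion (resp.\ an immersion), i.e.\ $p$ is separated (resp.\ locally separated).

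For $(ii)\Rightarrow(i)$ the stack $X$ is quasi-compact since $p$ is surjective with affine source. To control the diagonal, set $R=U\times_X U$ and factor $R\xrightarrow{(s,t)}U\times U\xrightarrow{\mathrm{pr}_1}U$; the composite is $s$, a base change of $p$, hence quasi-finite, flat and of finite presentation, while $\mathrm{pr}_1$ is separated because $U$ is affine. The cancellation lemmas (``if $g$ is separated and $g\circ f$ has property $\mathcal P$ then $f$ has $\mathcal P$'', for $\mathcal P$ quasi-compact, quasi-separated, quasi-finite, separated, or locally separated) then show that $(s,t)\colon R\to U\times U$ is quasi-compact, quasi-separated and quasi-finite, and separated (resp.\ locally separated) if $p$ is. Since $(s,t)$ is the base change of $\Delta_X$ along $U\times U\to X\times X$, these properties descend to $\Delta_X$, so $X$ is quasi-compact and quasi-separated with quasi-finite (resp.\ and locally separated, resp.\ and separated) diagonal.

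The substantive part is $(ii)\Leftrightarrow(iii)$. For $(iii)\Rightarrow(ii)$, cover the quasi-affine $V$ by finitely many affine opens $A_1,\dots,A_n$; then $\coprod_i A_i\to V\to X'\to X$ is a composition of an \'etale surjection, the finite flat finitely presented morphism $V\to X'$, and the \'etale finitely presented surjection $X'\to X$, hence quasi-finite, flat, finitely presented and surjective, with affine source, and the separation refinements are inherited ($V\to X'$ is finite, hence separated, and $X'\to X$ is representable, resp.\ representable and separated). For $(ii)\Rightarrow(iii)$ I first reduce to the case where $X$ admits a finite flat finitely presented presentation by a \emph{scheme}: applying Theorem~\pref{T:qff:finite} to $p\colon U\to X$ (legitimate since $X$ is quasi-compact and quasi-separated by $(ii)\Rightarrow(i)$) yields an \'etale finitely presented morphism $\widetilde X\to X$ and a scheme $\widetilde U$ with $\widetilde U\to\widetilde X$ finite, flat and finitely presented and $\widetilde U\to U$ \'etale surjective; replacing $\widetilde X$ by the open and closed image of $\widetilde U$ makes this morphism surjective and keeps $\widetilde X\to X$ surjective, and $\widetilde X$ again has quasi-finite diagonal, so it suffices to produce the cover of $(iii)$ for $\widetilde X$ and compose. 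The refinements of Theorem~\pref{T:qff:finite} give $\widetilde X\to X$ representable (resp.\ representable and separated) when $p$ is locally separated (resp.\ separated).

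So assume $W\to X$ is finite, flat, finitely presented and surjective with $W$ a scheme. Since $W\to X$ is finite, hence closed, each affine open $A\subseteq W$ determines an open $X_A\subseteq X$, the complement of the image of $W\setminus A$, with $W\times_X X_A$ an open subscheme of $A$ — hence quasi-affine — and finite flat finitely presented over $X_A$. As every finite subset of a scheme lies in an affine open, the fibre over any point of $X$ is contained in some such $A$, so the $X_A$ cover $X$; by quasi-compactness finitely many $X_{A_1},\dots,X_{A_m}$ suffice, and $X':=\coprod_i X_{A_i}\to X$ is then an open cover (in particular \'etale, finitely presented, surjective, and representable resp.\ representable and separated in the refined cases) for which $\coprod_i(W\times_X X_{A_i})\to X'$ is a finite flat finitely presented presentation by a quasi-affine scheme. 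Combining the four implications yields the equivalence. The main obstacle is this last implication $(ii)\Rightarrow(iii)$: the reduction to a scheme presentation uses the full strength of Theorem~\pref{T:qff:finite} (including its representability refinements and the fact that a finite flat finitely presented morphism has open and closed image), and the geometric step rests on the ``saturation'' of affine opens via closedness of finite morphisms together with the elementary fact that finite subsets of a scheme lie in affine opens — which is exactly what makes the $X_A$ simultaneously have quasi-affine preimage and jointly cover $X$.
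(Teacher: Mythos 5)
Your treatment of (i)$\Leftrightarrow$(ii) and (iii)$\Rightarrow$(ii) is fine and matches the paper's (terse) argument, but the heart of the theorem, (ii)$\Rightarrow$(iii), has two genuine gaps. First, Theorem~\pref{T:qff:finite} does \emph{not} give you a scheme $\widetilde U$: applied to $\map{p}{U}{X}$ it produces $U'\to U$ which in the general case is only \etale{} and quasi-separated (possibly non-representable), so $U'$ is merely a Deligne--Mumford stack, and even in the locally separated case it is only an algebraic space. Only when $p$ is representable and \emph{separated} do you get $U'\to U$ representable, separated and \etale{} over the affine $U$, hence $U'$ a (quasi-affine) scheme. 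So your ``reduction to a scheme presentation'' is unjustified exactly in the cases where it is needed. This is why the paper bootstraps: it applies Theorem~\pref{T:qff:finite}, uses Lemma~\pref{L:diag-separated:descent} along the finite faithfully flat $U'\to X'$ to see that $X'$ has locally separated (resp.\ separated) diagonal, replaces $X$ by $X'$ and repeats, until the separated case is reached, where Zariski's Main Theorem makes $U'$ quasi-affine over the affine $U$ and one is done -- no affine-saturation step at all.

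Second, even granting a finite flat surjection $W\to X$ with $W$ a scheme, your covering argument rests on the claim that every finite subset of a scheme lies in an affine open, and this is false in general. Hironaka's example gives a smooth proper threefold $W$ with a free $\Z/2\Z$-action and an orbit $\{P,Q\}$ contained in no affine open; taking $X=W/(\Z/2\Z)$ (an algebraic space satisfying (ii)) and the finite \etale{} cover $W\to X$, the point of $X$ under $\{P,Q\}$ lies in none of your opens $X_A$, so the $X_A$ do not cover $X$. The saturation trick does work when $W$ is quasi-affine (or more generally when finite sets lie in affine opens), but in that situation the conclusion of (iii) is essentially already in hand, and in the general situation the scheme you would need it for does not exist by your route. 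There is also a minor fixable point that the $X_A$ need not be quasi-compact, so $W\times_X X_A$ need not be quasi-affine without first shrinking to quasi-compact opens; but the two issues above are the substantive ones, and repairing them leads you back to the paper's descent-and-bootstrap argument.
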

\begin{proof}
Clearly \ref{TI:fin+etale-pres}$\implies$\ref{TI:qfin-pres}.
That \ref{TI:qfin-diag}$\implies$\ref{TI:qfin-pres} follows from
Theorem~\pref{T:qf-cover-of-Artin} and that \ref{TI:qfin-pres}$\implies$\ref{TI:qfin-diag} follows from Lemmas~\pref{L:diag-separated:descent}
and~\pref{L:quasi-separatedness}.
Assume that \ref{TI:qfin-diag} holds and choose a quasi-finite flat presentation
$\map{p}{U}{X}$ with $U$ affine as in \ref{TI:qfin-pres}. By
Theorem~\pref{T:qff:finite},
there is a commutative diagram
$$\xymatrix{U'\ar[d]^{p'}\ar[r] & U\ar[d]^{p}\\
X'\ar[r] & X\ar@{}[ul]|\circ}$$
such that $X'\to X$ and $U'\to U$ are \etale{}
(resp.\ representable and \etale{}, resp.\ representable, separated and
\etale{}) and surjective of finite presentation and
$\map{p'}{U'}{X'}$ is finite and faithfully flat.

If $X$ is arbitrary (resp.\ has locally separated diagonal), then $U'$ is
a Deligne--Mumford stack (resp.\ an algebraic space), so that $X'$ has locally
separated (resp.\ separated) diagonal by Lemma~\pref{L:diag-separated:descent}.
We can thus replace $X$ with $X'$ and assume that $X$ has locally separated
(resp.\ separated) diagonal. Applying this argument twice,
we can assume that $X$ has separated diagonal.

If $X$ has separated diagonal, then $X'\to X$ and $U'\to U$ are separated and
hence quasi-affine by Zariski's Main
Theorem~\cite[Thm.~II.6.15]{knutson_alg_spaces} and the theorem follows.
\end{proof}

\begin{remark}
The proof of~\cite[Exp.~VIII, Cor.~7.6]{sga1} shows that
in~\ref{TI:fin+etale-pres} we can choose $V\to X'\to X$ such that $V$ is
affine. Also see~\cite[Exp.~V, p.~270]{sga3}.
\end{remark}

\end{section}


\appendix

\begin{section}{Separation axioms for algebraic stacks}\label{A:separation}
A sheaf of sets $F$ on the category of schemes $\Sch$ with the \etale{}
topology is an \emph{algebraic space} if there exists a scheme $X$ and a
morphism $X\to F$ that is represented by surjective \etale{} morphisms
of schemes~\cite[D\'ef.~5.7.1]{raynaud-gruson}, i.e., for any scheme $T$
and morphism $T\to F$, the fiber product $X\times_F T$ is a scheme and
$X\times_F T\to T$ is surjective and \etale{}.

A \emph{stack} is a category fibered in groupoids over $\Sch$ with the
\etale{} topology satisfying the usual sheaf condition~\cite{laumon},
or equivalently, a $2$-sheaf $\Sch\to \Grpd$ in the sense of
Appendix~\ref{A:2-sheaves}.
A morphism $\map{f}{X}{Y}$ of stacks is \emph{representable} if for any scheme
$T$ and morphism $T\to Y$, the $2$-fiber product $X\times_Y T$ is an algebraic
space. A stack $X$ is \emph{algebraic} if there exists a smooth presentation,
i.e., a smooth, surjective and representable morphism $U\to X$ where $U$ is a
scheme (or algebraic space).

\begin{definition}
A morphism of algebraic stacks $\map{f}{X}{Y}$ is \emph{quasi-separated} if
$\Delta_f$ and $\Delta_{\Delta_f}$ are quasi-compact. An algebraic stack $X$ is
quasi-separated if $X\to \Spec \Z$ is quasi-separated. A morphism of algebraic
stacks is of \emph{finite presentation} if it is locally of finite presentation,
quasi-compact and quasi-separated.
\end{definition}

Recall that in~\cite{laumon} algebraic stacks are by definition quasi-separated
and have separated diagonals. In the remainder of this appendix we give
criteria for when this is the case.
An important example of a stack with non-separated diagonal is the stack of
log structures~\cite{olsson_log-geom-and-alg-stacks}. On the other hand, this
stack has at least \emph{locally separated diagonal}.

\begin{definition}
Let $\map{f}{X}{Y}$ be a representable morphism. We say that $f$ is
\emph{locally separated} if $\Delta_f$ is an immersion.
\end{definition}

Unramified morphisms are locally separated.
In particular, every Deligne--Mumford stack has locally separated diagonal.

\begin{lemma}\label{L:diagonal-separatedness}
Let $X$ be an algebraic stack. The following are equivalent:
\begin{enumerate}
\item $\Delta_X$ is separated (resp.\ locally separated,
resp.\ quasi-separated).
\item The inertia stack $I_X\to X$ is separated (resp.\ locally separated,
resp.\ quasi-separated).
\item The unit section $X\to I_X$ of the inertia stack is a closed immersion
  (resp.\ an immersion, resp.\ quasi-compact).
\end{enumerate}
\end{lemma}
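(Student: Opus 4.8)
The plan is to reduce every assertion about $\Delta_X$ to the corresponding assertion about the inertia stack $I_X \to X$, and then to the unit section $e_X \colon X \to I_X$, exploiting the standard cartesian square that relates these three morphisms. Recall that $I_X = X \times_{\Delta_X, X\times X, \Delta_X} X$, so that $I_X \to X$ is the base change of $\Delta_X$ along $\Delta_X$ itself; conversely one has the cartesian square
$$\xymatrix{I_X \ar[r]\ar[d] & X\ar[d]^{\Delta_X}\\ X\ar[r]_-{\Delta_X} & X\times X}$$
exhibiting $I_X \to X$ as a pullback of $\Delta_X$, while $\Delta_X$ is itself a pullback of $I_X\to X$ along the projection $X\times X \to X$ followed by $\Delta_X$ — more precisely, $\Delta_X \colon X \to X\times X$ has the property that its base change along any $T \to X\times X$ is controlled by $I_X$ after a further base change. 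The key diagram is the one already displayed in the excerpt for Lemma~\ref{L:diagonal-separatedness}'s cousin (the square with $I_Z$), together with the factorization of $I_f$ recorded in the ``Inertia stacks'' paragraph.

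The first step is (i)$\Rightarrow$(ii): if $\Delta_X$ has property $\mathbf{P}$ (separated / locally separated / quasi-separated), then so does its base change $I_X \to X$, since all three properties are stable under base change. The second step is (ii)$\Rightarrow$(i): here I would use that $\Delta_X$ can be recovered as a base change of $I_X \to X$. Concretely, pulling back $I_X \to X$ along the second projection $\mathrm{pr}_2 \colon X\times X \to X$ gives $I_X \times_{X,\mathrm{pr}_2} (X\times X) \to X\times X$, and $\Delta_X \colon X \to X\times X$ is obtained from this by the further base change along $\Delta_X \times \mathrm{id}$; alternatively, and more cleanly, one checks that $\Delta_{\Delta_X}$ is a base change of $\Delta_{(I_X \to X)}$ and that $\Delta_X$ is a monomorphism-up-to-the-inertia, so that separatedness/quasi-separatedness of $\Delta_X$ is equivalent to the same property for $I_X \to X$. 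The cleanest route is: property $\mathbf{P}$ of a representable morphism $g$ is a property of $\Delta_g$, and $\Delta_{I_X/X}$ is the base change of $\Delta_{\Delta_X}$ (this uses that $I_X/X$ is the base change of $\Delta_X$), so property $\mathbf{P}$ of $I_X \to X$ forces property $\mathbf{P}$ of $\Delta_X$ after checking surjectivity of the relevant base-change morphism, which holds because $\Delta_X$ is surjective onto its image and the construction is compatible with the diagonal embedding.

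The equivalence (ii)$\Leftrightarrow$(iii) is the standard reformulation: for a representable morphism $h \colon Y \to Z$ with a section $\sigma \colon Z \to Y$ (here $h = (I_X \to X)$ and $\sigma = e_X$), the section $\sigma$ is a base change of $\Delta_h$ along $(\mathrm{id}_Y, \sigma h) \colon Y \to Y\times_Z Y$, so $\Delta_h$ a closed immersion (resp.\ an immersion, resp.\ quasi-compact) implies $\sigma$ is; conversely $\Delta_h$ itself is covered — in the relevant sense — by translates of $\sigma$ via the groupoid structure on $I_X \rightrightarrows X$, more precisely $\Delta_{I_X/X}$ is obtained from $e_X$ by base change along the action map, so the three separation properties of $e_X$ and of $\Delta_{I_X/X}$ coincide, and $h$ having property $\mathbf{P}$ means exactly $\Delta_h$ has the corresponding property. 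I expect the main obstacle to be the bookkeeping in (ii)$\Rightarrow$(i): making precise the sense in which $\Delta_X$ is ``built from'' $I_X \to X$ by base change, since the naive base change produces $I_X$ rather than $X$ itself, and one must insert the extra diagonal to land back on $X$ — this is where one needs the cancellation property that $\mathbf{P}$ of $\Delta_{\Delta_X}$ (equivalently $\Delta$ of the diagonal-of-the-diagonal) is detected after the faithfully flat, or merely surjective, base change supplied by the groupoid structure. Once that diagram is pinned down, all three equivalences fall out formally.
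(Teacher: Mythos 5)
The paper gives no proof of this lemma, but the intended mechanism is visible in its proof of Lemma~\pref{L:diag-separated:descent}: the unit section $X\to I_X$ \emph{is} the double diagonal $\Delta_{\Delta_X}$, because $I_X=X\times_{\Delta_X,\,X\times X,\,\Delta_X}X$. With the paper's conventions (a representable morphism is separated, locally separated, quasi-separated according as its diagonal is a closed immersion, an immersion, quasi-compact), this identification makes (i)$\Leftrightarrow$(iii) essentially definitional, and the whole lemma then follows from the cycle (i)$\Rightarrow$(ii)$\Rightarrow$(iii)$\Rightarrow$(i): (i)$\Rightarrow$(ii) because $I_X\to X$ is the base change of $\Delta_X$ along $\Delta_X$, and (ii)$\Rightarrow$(iii) because the unit section, being a section of $I_X\to X$, is a pullback of $\Delta_{I_X/X}$ along $(\id{I_X},e_X\circ p)$, where $\map{p}{I_X}{X}$ is the projection --- an argument you do have in your (ii)$\Leftrightarrow$(iii) discussion.

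The genuine gap in your plan is the leg (ii)$\Rightarrow$(i). The claim that $\Delta_X$ is recovered from $I_X\to X$ by base change is false as stated: pulling $I_X\times_X(X\times X)\to X\times X$ back along $\Delta_X\times\id{}$ returns $I_X\to X$ again, not $\Delta_X$ (you half-notice this yourself). In general the fiber of $\Delta_X$ over $T\to X\times X$ is an Isom-sheaf, which is only fppf-locally on the locus where it has a section isomorphic to a pullback of $I_X$, so a literal reduction of $\Delta_X$ to $I_X\to X$ would require torsor trivializations plus descent. Your fallback --- that the property of $\Delta_{\Delta_X}$ is ``detected after the faithfully flat, or merely surjective, base change supplied by the groupoid structure'' --- is not a valid principle (being a closed immersion or an immersion is not detected after an arbitrary surjective base change), and you also quote the base-change relation in the wrong direction: ``$\Delta_{I_X/X}$ is the base change of $\Delta_{\Delta_X}$'' is again (i)$\Rightarrow$(ii), whereas (ii)$\Rightarrow$(i) needs $\Delta_{\Delta_X}$ to be a base change of $\Delta_{I_X/X}$, which is exactly the section argument applied to $e_X$. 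None of the descent machinery is needed: once you observe $e_X=\Delta_{\Delta_X}$, (iii)$\Rightarrow$(i) is immediate, and your (correct) group-object argument realizing $\Delta_{I_X/X}$ as a pullback of $e_X$ along the division map $(g,h)\mapsto g^{-1}h$ becomes superfluous, though it is a legitimate alternative proof of (iii)$\Rightarrow$(ii).
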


\begin{lemma}\label{L:diag-separated:descent}
Let $\map{f}{X}{Y}$ be a faithfully flat morphism, locally of finite
presentation, between algebraic stacks.
\begin{enumerate}
\item If $f$ and $\Delta_X$ are separated then so is $\Delta_Y$.
\item If $f$ is representable and $f$ and $\Delta_X$ are locally
separated, then so is $\Delta_Y$.
\end{enumerate}
\end{lemma}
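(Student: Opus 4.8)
The plan is to reduce both statements to the corresponding statements about the \emph{unit sections of inertia stacks}, using Lemma~\ref{L:diagonal-separatedness}: namely, $\Delta_X$ is separated (resp.\ locally separated) if and only if the unit section $X\to I_X$ is a closed immersion (resp.\ an immersion), and similarly for $Y$. So the goal becomes: given that $X\to I_X$ is a closed immersion (resp.\ an immersion), show that $Y\to I_Y$ is a closed immersion (resp.\ an immersion). The key geometric input is the compatibility of inertia stacks with base change along $f$: from the description of $I_f\colon I_X\to I_Y$ as the composite $I_X\to I_Y\times_Y X\to I_Y$ (recalled in the ``Inertia stacks'' paragraph), one gets that the square with corners $X$, $I_X$, $Y$, $I_Y$ — that is, the square
$$\xymatrix{X\ar[r]\ar[d]_f & I_X\ar[d]\\ Y\ar[r] & I_Y}$$
with horizontal arrows the unit sections — is cartesian when $f$ is representable, and more generally $I_X\to I_Y\times_Y X$ is (a base change of $\Delta_f$), so in case (i) one should work with $I_Y\times_Y X$ as an intermediary.

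First I would treat case (ii), which is cleaner because $f$ is representable: here $I_X = I_Y\times_Y X$, so the unit section $X\to I_X$ is exactly the base change of the unit section $Y\to I_Y$ along the faithfully flat, locally finitely presented morphism $I_f\colon I_X\to I_Y$ pulled back appropriately — more precisely, the square above is cartesian with left vertical arrow $f$ faithfully flat and locally of finite presentation. The property ``is an immersion'' satisfies fppf descent on the target (an immersion is a monomorphism which is locally a closed immersion; monomorphism and the closed-immersion part both descend along fppf covers, cf.\ \cite[17.9.1]{egaIV} type arguments), so since the pull-back $X\to I_X$ of $Y\to I_Y$ is an immersion by hypothesis, $Y\to I_Y$ is an immersion, hence $\Delta_Y$ is locally separated by Lemma~\ref{L:diagonal-separatedness}.

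For case (i), $f$ is not assumed representable, so $I_X\to I_Y\times_Y X$ is only a base change of $\Delta_f$, which by hypothesis is separated. I would factor the unit section $X\to I_X$ as $X\to I_Y\times_Y X\to I_X$... wait — rather, one has the unit section $s_X\colon X\to I_X$ and the projection $I_X\to I_Y\times_Y X$; the composite $X\xrightarrow{s_X} I_X\to I_Y\times_Y X$ is the base change along $f$ of the unit section $s_Y\colon Y\to I_Y$. Now $X\to I_X$ being a closed immersion and $I_X\to I_Y\times_Y X$ being separated together force $X\to I_Y\times_Y X$ to be a closed immersion (a closed immersion followed by a separated morphism: the composite is proper and a monomorphism hence a closed immersion — or directly, $X\to I_Y\times_Y X$ is proper since it is the composite of the proper $s_X$ with the separated, and here quasi-compact enough, $I_X\to I_Y\times_Y X$, and it is a monomorphism). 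Since $X\to I_Y\times_Y X$ is the pull-back of $s_Y$ along the faithfully flat, locally finitely presented $f$, and ``closed immersion'' descends along fppf covers, we conclude $s_Y$ is a closed immersion, so $\Delta_Y$ is separated by Lemma~\ref{L:diagonal-separatedness}.

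The main obstacle I expect is the bookkeeping in case (i): verifying that the composite $X\to I_Y\times_Y X$ is genuinely proper (one needs the right quasi-compactness/finiteness of $I_X\to I_Y\times_Y X$, which comes from $\Delta_f$ being separated and $f$ being locally of finite presentation so that $\Delta_f$ is locally of finite type), and making sure the fppf-descent statement for ``closed immersion'' (resp.\ ``immersion'') is applied with the hypotheses it actually requires. Everything else — the cartesian-square identifications and the reduction via Lemma~\ref{L:diagonal-separatedness} — is formal.
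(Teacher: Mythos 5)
Your overall strategy is the paper's: reduce via Lemma~\pref{L:diagonal-separatedness} to the unit sections, factor $X\to I_X\to I_Y\times_Y X$, observe that the composite is the base change of $Y\to I_Y$ along the fppf morphism $I_Y\times_Y X\to I_Y$, and descend. In case (i), however, your justification of the key step is off: a closed immersion followed by a separated (even quasi-compact) morphism need not be proper, nor a closed immersion (compose a closed point of $\A{1}\setminus\{0\}$, or $\A{1}\setminus\{0\}$ itself, with the open immersion into $\A{1}$). What makes the step work is that \emph{separatedness of $f$ means $\Delta_f$ is proper}, so $I_X\to I_Y\times_Y X$, being a pull-back of $\Delta_f$, is proper; then the composite is proper, it is a monomorphism (a section), hence a closed immersion, and this descends along $I_Y\times_Y X\to I_Y$. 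That is exactly the paper's argument; your write-up only ever invokes separatedness and quasi-compactness of the second map, which is not enough.

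The genuine gap is in case (ii). The identification $I_X=I_Y\times_Y X$ for representable $f$ is false: $I_X\to I_Y\times_Y X$ is a pull-back of $\Delta_f$, hence a monomorphism when $f$ is representable, but not an isomorphism in general --- for the atlas $\Spec(k)\to BG$ one has $I_X=\Spec(k)$ while $I_Y\times_Y X=G$. It is true (though not for the reason you give) that the square formed by the unit sections, $f$ and $I_f$ is cartesian when $f$ is representable, since the fibre product is the relative inertia, which is trivial for representable $f$; but this does not rescue the argument: to descend ``immersion'' for $Y\to I_Y$ you need a faithfully flat cover of the \emph{target} $I_Y$, and $I_f$ is in general neither flat nor surjective (again $\Spec(k)\to I_{BG}=[G/G]$ misses all nontrivial conjugacy classes). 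A tell-tale symptom is that your case (ii) never uses the hypothesis that $f$ is locally separated; that hypothesis is precisely what makes $I_X\to I_Y\times_Y X$ (pull-back of $\Delta_f$) an immersion, so that, exactly as in case (i), the composite $X\to I_Y\times_Y X$ is an immersion and is the base change of $Y\to I_Y$ along the genuinely fppf morphism $I_Y\times_Y X\to I_Y$. Finally, your assertion that ``immersion'' descends fppf by splitting it into a monomorphism and a closed-immersion part is glib --- the locally closed case is the delicate one --- which is why the paper offers the alternative argument using that $f$ is universally open.
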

\begin{proof}
This follows from the cartesian diagram
$$\xymatrix{%
X\ar@{(->}[r]\ar[d] & I_X\ar[r] & I_Y\times_Y X\ar[r]\ar[d] & X\ar[d]\\
Y\ar@{(->}[rr] && I_Y\ar[r]\ar@{}[ull]|\square & Y\ar@{}[ul]|\square.\\
}$$
Indeed, it is enough to show that the unit section $Y\inj I_Y$ of the
inertia stack is a closed immersion (resp.\ an immersion). By fppf-descent
(or by noting that $f$ is universally open) it is enough to show that the
morphisms $X\to I_X$ and $I_X\to I_Y\times_Y X$ are proper (resp.\ immersions).
This is the case since the first map is the double diagonal of $X$ and the
second map is a pull-back of the diagonal of $f$.
\end{proof}

\begin{lemma} 
Let $\map{f}{X}{Y}$ be a morphism of stacks.
\begin{enumerate}
\item If $X$ and $\Delta_Y$ are quasi-compact then so is $f$.
\item If $\Delta_X$, and $\Delta_{\Delta_Y}$ are quasi-compact then so is
$\Delta_f$.
\item If $\Delta_{\Delta_X}$ is quasi-compact then so is $\Delta_{\Delta_f}$.
\end{enumerate}
In particular, if $X$ and $Y$ are quasi-separated then so is $f$.
\end{lemma}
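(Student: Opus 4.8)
The plan is to deduce all three parts, and with them the final assertion, from a single cancellation principle: \emph{if $\map{g}{A}{B}$ and $\map{h}{B}{C}$ are morphisms of algebraic stacks with $h\circ g$ quasi-compact and $\Delta_h$ quasi-compact, then $g$ is quasi-compact.} First I would prove this by factoring $g$ through its graph, as the composite $A\xrightarrow{(\id{A},g)} A\times_C B\to B$ with second arrow the projection. The graph is the base change of $\Delta_h$ along the morphism $A\times_C B\to B\times_C B$, $(a,b)\mapsto (g(a),b)$, hence quasi-compact; and the projection $A\times_C B\to B$ is the base change of $h\circ g$ along $h$, hence quasi-compact. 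As quasi-compact morphisms are stable under composition, $g$ is quasi-compact. Everything here is $2$-categorical but entirely formal, the graph carrying its canonical $2$-isomorphism.

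Next I would apply this three times. For (i), take $h$ to be the structure morphism $Y\to\Spec(\Z)$; then $h\circ f$ is the structure morphism of $X$, quasi-compact precisely because $X$ is, while $\Delta_h=\Delta_Y$ is quasi-compact by hypothesis, so $f$ is quasi-compact. For (ii), take $g=\Delta_f\colon X\to X\times_Y X$ and let $h\colon X\times_Y X\to X\times X$ be the canonical morphism; then $h\circ\Delta_f=\Delta_X$ is quasi-compact by hypothesis, and $h$ is the base change of $\Delta_Y$ along $f\times f$, so $\Delta_h$ is a base change of $\Delta_{\Delta_Y}$ and hence quasi-compact by hypothesis; thus $\Delta_f$ is quasi-compact. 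For (iii), take $g=\Delta_{\Delta_f}\colon X\to X\times_{X\times_Y X}X$ and let $\map{\iota}{X\times_{X\times_Y X}X}{X\times_{X\times X}X}$ be the comparison morphism induced by the $h$ of part (ii); then $\iota\circ\Delta_{\Delta_f}=\Delta_{\Delta_X}$ is quasi-compact by hypothesis. Here $\iota$ is a base change of $\Delta_h$, and $\Delta_h$ is in turn a base change of the unit section $Y\to I_Y$, i.e.\ of $\Delta_{\Delta_Y}$, which is always a monomorphism; hence $\iota$ is a monomorphism, $\Delta_\iota$ is an isomorphism, and $\Delta_{\Delta_f}$ is quasi-compact. (Equivalently, $\iota$ is the canonical monomorphism of the relative inertia stack $X\times_{X\times_Y X}X$ into the inertia stack $X\times_{X\times X}X=I_X$.)

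The final assertion is then immediate: if $X$ and $Y$ are quasi-separated, then $\Delta_X$, $\Delta_{\Delta_X}$ and $\Delta_{\Delta_Y}$ are all quasi-compact, so part (ii) shows $\Delta_f$ is quasi-compact and part (iii) shows $\Delta_{\Delta_f}$ is quasi-compact, i.e.\ $f$ is quasi-separated. The step I expect to be the main obstacle is the bookkeeping in (iii): one must identify the comparison map $\iota$ between the two iterated fibre products correctly, exhibiting it as a base change of $\Delta_h$ (equivalently, as the inclusion of the relative inertia into the absolute inertia), so that $\Delta_\iota$ becomes trivial. Once the standard compatibility $\Delta_{v\circ u}=(\text{canonical comparison})\circ\Delta_u$ for a composite $v\circ u$ is set up cleanly, the remaining arguments are formal, using only that quasi-compactness and ``being a monomorphism'' are preserved by composition and base change, and that the diagonal of a monomorphism is an isomorphism.
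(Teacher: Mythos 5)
Your argument is correct, and since the paper states this lemma without proof (it is the standard permanence fact), there is nothing to diverge from: the graph-factorization cancellation principle you prove first, applied with $h$ the structure morphism $Y\to\Spec(\Z)$, with $\map{h}{X\times_Y X}{X\times X}$ the base change of $\Delta_Y$, and with the comparison monomorphism $\map{\iota}{X\times_{X\times_Y X}X}{X\times_{X\times X}X}$, is exactly the intended route. Your identifications check out, including the one point that needs a hypothesis-free justification in (iii): $\Delta_{\Delta_Y}$ (the unit section $Y\to I_Y$) is a monomorphism because $\Delta_Y$ is representable, so $\iota$ is a monomorphism and $\Delta_\iota$ is an isomorphism, making the cancellation principle applicable without any assumption on $Y$.
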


\begin{lemma}\label{L:quasi-separatedness}
Let $\map{f}{X}{Y}$ be a surjective morphism of algebraic stacks.
\begin{enumerate}
\item\label{LI:qsep:Y}
If $X$ is quasi-compact then so is $Y$.
\item\label{LI:qsep:DY}
If $f$ and $\Delta_X$ are quasi-compact, then so is $\Delta_Y$.
\item\label{LI:qsep:DDY}
If $f$, $\Delta_f$ and $\Delta_{\Delta_X}$ are quasi-compact, then so is
$\Delta_{\Delta_Y}$.
\end{enumerate}
In particular, if $f$ is quasi-compact and quasi-separated and $X$ is
quasi-separated then $Y$ is quasi-separated.
\end{lemma}
\begin{proof}
\ref{LI:qsep:Y} Choose a commutative diagram
$$\xymatrix{U\ar[r]\ar[d] & V\ar[d]\\
X\ar[r]_f & Y\ar@{}[ul]|\circ}$$
with $U$ and $V$ schemes and such that the vertical morphisms are smooth and
surjective. If $X$ is quasi-compact then we can choose $U$ quasi-compact
and hence also $V$.

\ref{LI:qsep:DY}--\ref{LI:qsep:DDY} The latter statements follows
from~\ref{LI:qsep:Y} and the commutative diagrams
$$\vcenter{\xymatrix{X\ar[r]^-{\Delta_X}\ar[d]_f & X\times X\ar[d]^{f\times f}\\
Y\ar[r]_-{\Delta_Y} & Y\times Y\ar@{}[ul]|\circ}}\quad\text{and}\quad
\vcenter{\xymatrix{X\ar@{(->}[r]^-{\Delta_{\Delta_X}}\ar[d]^f
 & I_X\ar[r]^-g & I_Y\times_Y X\ar[r]\ar[d] & X\ar[d]^f\\
Y\ar@{(->}[rr]_-{\Delta_{\Delta_Y}} && I_Y\ar[r]\ar@{}[ull]|\square
 & Y\ar@{}[ul]|\square}}$$
since $g$ is a pull-back of $\Delta_f$.
\end{proof}

\end{section}


\begin{section}{Algebraicity of points on quasi-separated stacks}
\label{A:alg-points}
Let $X$ be a quasi-separated algebraic stack. In this appendix we show that
every point on $X$ is algebraic, i.e., that for every point $\xi\in |X|$ there
is an algebraic stack $\stG_\xi$ and a monomorphism $\stG_\xi\inj X$ with image
$\xi$ such that $\stG_\xi$ is an fppf-gerbe over the spectrum of a field
$\kappa(\xi)$. In fact, we prove the stronger statement that $\stG_\xi$ is the
generic fiber of a gerbe $Z\to \underline{Z}$ where $Z\inj X$ is an immersion
and $\underline{Z}$ is an integral scheme. It is also enough to assume that $X$
has quasi-compact (but not necessarily quasi-separated) diagonal.

When $X$ is a locally noetherian stack, this result is shown
in~\cite[\S11]{laumon} although the definition of algebraic point is slightly
wrong. The error in~\cite[D\'ef.~11.2]{laumon} is the assertion that if
$\map{x}{\Spec k}{X}$ is \emph{any} representative of $\xi$ and $\Spec k\surj
\stG_x\inj X$ is its epi-mono factorization as fppf stacks, then $\stG_x$ is
independent of the choice of representative $x$. This assertion is not correct
unless restricted to fields $k$ that are finite over the residue field
$\kappa(\xi)$, the reason being that non-finite field extensions are not covering in
the fppf topology. It is possible that the assertion is valid with respect to
the \emph{fpqc topology} but that approach opens up other difficulties as the
epi-mono factorization in the fpqc topology a priori depends on the choice of
universe.

To obtain the algebraicity in full generality we begin with a generic flatness
result due to Raynaud and Gruson.

\begin{theorem}[Generic flatness]\label{T:generic-flatness}
Let $Y$ be an integral scheme. Let $X$ be an algebraic stack and let
$\map{f}{X}{Y}$ be a morphism of finite type. Then there exists an open dense
subscheme $Y_0\subseteq Y$ such that $f|_{Y_0}$ is flat and locally of finite
presentation.
\end{theorem}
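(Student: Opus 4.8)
The idea is to reduce to the classical generic flatness theorem for schemes of Raynaud and Gruson \cite{raynaud-gruson} by passing through a smooth presentation of $X$. First I would reduce to the case $Y=\Spec(A)$ with $A$ an integral domain: this is harmless, since flatness and local finite presentation are local on the target, and since $Y$ is irreducible every non-empty open subscheme of $Y$ contains the generic point of $Y$, so that a union of such opens is automatically dense. Over an affine (hence quasi-compact and quasi-separated) base $Y$, the hypothesis that $f$ is of finite type forces $X$ to be quasi-compact, so I may choose a smooth surjective presentation $\map{p}{U}{X}$ with $U$ an affine scheme. Then $g := f\circ p\colon U\to Y$ is a morphism of \emph{schemes}: it is locally of finite type, being a composite of such, and quasi-compact, since $Y$ is affine and $U$ is quasi-compact; hence $g$ is of finite type.

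Next I would apply generic flatness for finite-type morphisms of schemes over an integral base to $g$, obtaining a dense open subscheme $Y_0\subseteq Y$ over which $g$ is flat and of finite presentation. (If one wishes to avoid the non-noetherian form of this statement, write $A$ as the filtered union of its finitely generated $\Z$-subalgebras $A_\lambda$; each $A_\lambda$ is a noetherian domain, $g$ descends to a morphism $g_\lambda$ of finite type --- hence of finite presentation --- over $\Spec(A_\lambda)$, one applies \cite[6.9.1]{egaIV} there, and one pulls the resulting dense open back to $\Spec(A)$.) Finally, the base change of $p$ is a smooth surjective morphism $U\times_Y Y_0\to X\times_Y Y_0$, so since flatness and local finite presentation of a morphism of algebraic stacks can be tested on a smooth cover of the source, and $U\times_Y Y_0\to Y_0$ is flat and (locally) of finite presentation, it follows that $f|_{Y_0}\colon X\times_Y Y_0\to Y_0$ is flat and locally of finite presentation. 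This completes the argument.

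There is no essential obstacle here; the proof is bookkeeping around a known result. The two points that need a little care are (a) checking that $g\colon U\to Y$ is genuinely of finite type --- which is precisely why one first shrinks $Y$ to an affine base and takes $U$ affine --- and (b) the descent of flatness and of local finite presentation along the smooth surjection $U\times_Y Y_0\to X\times_Y Y_0$, both of which are standard facts about morphisms of algebraic stacks.
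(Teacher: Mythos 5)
Your overall skeleton matches the paper's own reduction: the paper likewise first replaces $Y$ by a dense open affine and $X$ by an (affine) presentation, using that flatness and local finite presentation are smooth-local on the source, so everything comes down to an affine scheme $U$ of finite type over an integral affine $Y=\Spec(A)$. The divergence, and the genuine gap, is in how you treat that scheme case when $A$ is not noetherian. Your parenthetical reduction to EGA~IV~6.9.1 does not work: writing $A=\bigcup_\lambda A_\lambda$ as a filtered union of finitely generated $\Z$-subalgebras, the morphism $\map{g}{U}{\Spec(A)}$ is only of \emph{finite type}, not of finite presentation (it is the composite of the locally finitely presented $U\to X$ with the merely finite-type $X\to Y$), and finite-type morphisms do not descend along such a limit --- only finitely presented ones do \cite[Thm.~8.8.2]{egaIV}. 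Concretely, if $U=\Spec(A[x_1,\dots,x_n]/I)$ with $I$ not finitely generated, there is no $\lambda$ and finite-type $A_\lambda$-scheme $U_\lambda$ with $U_\lambda\times_{\Spec(A_\lambda)}\Spec(A)\iso U$; and if one instead defines $U_\lambda$ by finitely many elements of $I$, one only obtains a closed immersion $U\inj U_\lambda\times_{\Spec(A_\lambda)}\Spec(A)$, and generic flatness of $U_\lambda$ over $A_\lambda$ says nothing about the closed subscheme $U$. So the step ``$g$ descends to a finite-type, hence finitely presented, $g_\lambda$'' fails, and the statement you invoke --- generic flatness for finite-type morphisms over an arbitrary integral base --- is precisely the nontrivial content of the theorem.

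The paper supplies the missing argument as follows: choose a closed immersion $\injmap{j}{U}{\overline{X}}$ into an affine finitely presented $Y$-scheme; by Raynaud--Gruson \cite[Thm.~3.4.6]{raynaud-gruson} the locus $\overline{W}\subseteq\overline{X}$ where $j_*\sO_U$ is $Y$-flat is open and the module is finitely presented there, so the flat locus $W\subseteq U$ of $g$ is open and $g|_W$ is of finite presentation; then one removes $\overline{g(U\setminus W)}$ from $Y$, which is a proper closed subset because $g(U\setminus W)$ is pro-constructible (hence its closure is its set of specializations, \cite[Cor.~7.3.2]{egaI_NE}) and does not contain the generic point, $g$ being trivially flat over the generic fiber. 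Alternatively you could cite a non-noetherian generic-flatness (generic-freeness) statement over a domain directly, e.g.\ from the Stacks Project; but some such input --- the Raynaud--Gruson openness theorem or a genuine d\'evissage --- is needed, and the naive noetherian approximation you propose is not a substitute for it.
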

\begin{proof}
We can replace $Y$ with an open dense affine subscheme. We can also replace
$X$ with a presentation and assume that $X$ is affine.

Choose a closed
immersion $\injmap{j}{X}{\overline{X}=\A{n}_Y}$. Let $U\subseteq |X|$ be the
locus where $f$ is flat and let $\overline{U}\subseteq
|\overline{X}|$ be the locus where $j_*\sO_X$ is flat over $Y$. Then
$\overline{U}=\overline{X}\setminus j(X\setminus U)$ so that
$U=j^{-1}(\overline{U})$. According to Raynaud and
Gruson~\cite[Thm.~3.4.6]{raynaud-gruson}, the subset $\overline{U}\subseteq
|\overline{X}|$ is open and $(j_*\sO_X)|_{\overline{U}}$ is an
$\sO_{\overline{U}}$-module of finite presentation. Equivalently, we have that
$U\subseteq X$ is open and $f|_U$ is locally of finite presentation.

It remains to find an open dense subset $V\subseteq Y$ such that $f^{-1}(V)$ is
contained in $U$. We let $V=Y\setminus \overline{f(X\setminus U)}$ which
suffices if we can show that $V$ is not the empty set. Since $f$ is
quasi-compact, it follows that $f(X\setminus U)$ is pro-constructible and
hence that $\overline{f(X\setminus U)}$ coincides with the specialization of
$f(X\setminus U)$~\cite[Cor.~7.3.2]{egaI_NE}. Since $f$ is trivially flat at
the generic point of $Y$, it follows that $V$ is non-empty.
\end{proof}

\begin{theorem}[Algebraicity of points]\label{T:algebraicity-of-points}
Let $X$ be an algebraic stack with quasi-compact diagonal. Let $\xi\in |X|$ be
a point. Then there is a quasi-compact immersion $Z\inj X$ such that
\begin{enumerate}
\item $\xi\in |Z|$,
\item\label{TI:alg-pts:flat-inertia}
The inertia stack $I_Z\to Z$ is flat and locally of finite presentation,
\item The stack $Z$ is an fppf-gerbe over an affine scheme
  $\underline{Z}$. The structure morphism $\map{\pi}{Z}{\underline{Z}}$ is
  faithfully flat and locally of finite presentation.
  The scheme $\underline{Z}$ is integral with generic point
  $\underline{\xi}=\pi(\xi)$.
\end{enumerate}
In particular, $\xi\in |X|$ is algebraic with residual gerbe
$\stG_\xi=\pi^{-1}(\underline{\xi})$ and residual field
$\kappa(\underline{\xi})$
and the monomorphism $\stG_\xi\inj X$ is quasi-affine.
\end{theorem}
\begin{proof}
We can replace $X$ with the reduced closed substack $\overline{\{\xi\}}$ so
that $|X|$ is irreducible. To show \ref{TI:alg-pts:flat-inertia}, it is then
enough to show that $I_X\to
X$ is flat and locally of finite presentation over a non-empty quasi-compact
open subset $Z\subseteq X$. Let $\map{p}{U}{X}$ be a smooth presentation with
$U$ a scheme. We can replace $U$ with an affine non-empty open subscheme and
$X$ with its image and assume that $U$ is affine. Let $\map{x}{\Spec k}{X}$ be
a representative of $\xi$. Since $X$ has quasi-compact diagonal, it follows
that $x$ is quasi-compact. Thus $x^{-1}(U)\to U$ is quasi-compact so that
$x^{-1}(U)$ is an algebraic space of finite type over $\Spec k$. Let $W\to
x^{-1}(U)$ be an \etale{} presentation with $W$ an affine scheme of finite type
over $\Spec k$.

As $p$ is open we have that $U$ is the closure of the image of $W$. As $W$ has
a finite number of irreducible components, so has $U$. We can thus replace $U$
by an open non-empty irreducible quasi-compact subscheme and assume that $U$ is
an integral scheme. It now follows from Theorem~\pref{T:generic-flatness} that
$I_X\times_X U\to U$ is flat and locally of finite presentation over an open
dense subscheme $U_0\subseteq U$. We let $Z=p(U_0)$ and
\ref{TI:alg-pts:flat-inertia} follows by flat descent.

Now, as $I_Z\to Z$ is flat and locally of finite presentation, we have that
the fppf-sheafification $\underline{Z}$ of $Z$ is an algebraic space and that
$Z\to \underline{Z}$ is faithfully flat and locally of finite
presentation~\cite[Cor.~10.8]{laumon}. Moreover as the diagonal of $Z$ is
quasi-compact and $\Delta_{Z/\underline{Z}}$ is surjective, it follows that the
diagonal of $\underline{Z}$ is quasi-compact, i.e., that $\underline{Z}$ is a
quasi-separated algebraic space. After replacing $\underline{Z}$ with a dense
open we can thus assume that $\underline{Z}$ is a \emph{scheme}.

Since $Z$ is reduced with generic point $\xi$, we have that $\underline{Z}$ is
reduced with generic point $\underline{\xi}$. We may thus replace
$\underline{Z}$ with an open dense subscheme so that $\underline{Z}$ becomes
affine.
\end{proof}
\end{section}


\begin{section}{Two lemmas on algebraic spaces}\label{A:lemmas}
In this appendix we state two lemmas on algebraic spaces that likely are
well-known to experts.

\begin{lemma}\label{L:finite-flat-pres-of-alg-space:aff/qaff}
Let $X$ be an algebraic space and let $\map{p}{X'}{X}$ be a finite flat
presentation by an affine (resp.\ quasi-affine) scheme $X'$. Then $X$ is an
affine (resp.\ quasi-affine) scheme.
\end{lemma}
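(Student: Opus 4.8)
The plan is to reduce the statement to the effectivity of finite locally free descent. Since the presentation $\map{p}{X'}{X}$ is finite, faithfully flat and of finite presentation, it is finite locally free; put $R=X'\times_X X'$ with its two projections $\map{q_1,q_2}{R}{X'}$, so that $(R,X')$ is a finite locally free equivalence relation whose quotient fppf sheaf is $X$. If $X'$ is affine then $R$, being finite over an affine scheme, is affine; if $X'$ is merely quasi-affine then $R$ is still quasi-affine, since $R\to X'$ is affine, $R$ is quasi-compact, and the preimages in $R$ of the affine members of a basic affine cover of $X'$ are affine and cover $R$, so that $\mathcal{O}_R$ is ample. It therefore suffices to prove that the quotient of an affine (resp.\ quasi-affine) scheme by a finite locally free equivalence relation is again affine (resp.\ quasi-affine).

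For the affine case, write $X'=\Spec B$ and $R=\Spec C$, and put $A=\{b\in B: q_1^{\#}(b)=q_2^{\#}(b)\}$; by faithfully flat descent of quasi-coherent modules $A=\Gamma(X,\mathcal{O}_X)$, and there is a canonical morphism $\map{g}{X}{\Spec A}$. That $g$ is an isomorphism — equivalently, that $\Spec A$ represents the quotient $X'/R=X$ — is the classical passage-to-the-quotient theorem for finite locally free equivalence relations, which I would invoke from~\cite[Exp.~V]{sga3} (compare~\cite{knutson_alg_spaces}); its proof runs as follows. Regarding $C$ as a finite locally free $B$-module via $q_1^{\#}$, the characteristic polynomial $P_b(T)\in B[T]$ of multiplication by $q_2^{\#}(b)$ satisfies $P_b(b)=0$ upon applying the retraction $C\to B$ given by the unit (diagonal) section, and its coefficients are $R$-invariant by the groupoid identities over the triple fibre product, so they lie in $A$; thus $B$ is integral over $A$, and by a generation argument even module-finite. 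One then checks that $R\to X'\times_{\Spec A}X'$ is an isomorphism and that $X'\to\Spec A$ is faithfully flat, whence by descent $\Spec A=X'/R=X$; in particular $X$ is affine.

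For the quasi-affine case, one first records that $X$ is quasi-compact — as the surjective image of the quasi-compact $X'$ — and quasi-separated, since $\Delta_X$ is, fppf-locally on $X\times X$, the quasi-compact affine monomorphism $R\to X'\times X'$. One then reduces to the affine case by a localization argument: writing $X'$ as a quasi-compact open subscheme of an affine scheme and using the norm map $N_{X'/X}$ attached to the finite locally free morphism $p$, the $R$-saturations of the affine members of a suitable basic cover of $X'$ are again affine opens of $X'$, their images form an affine open cover of $X$, and hence $\mathcal{O}_X$ is ample, so $X$ is quasi-affine. (Alternatively, one checks directly that finite locally free descent is effective for quasi-affine schemes.)

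The only genuinely non-formal ingredient is the effectivity in the affine case — that the ring of invariants $A$ really represents the quotient, i.e.\ that $R=X'\times_{\Spec A}X'$ as $X'$-schemes and $X'\to\Spec A$ is faithfully flat. This is exactly where finite local freeness is indispensable: it makes characteristic polynomials and norms available and forces the comparison map to be an isomorphism of finite locally free schemes over $X'$. Accordingly, the bulk of the argument is either the citation of the classical theorem or a reproduction of its norm/characteristic-polynomial proof.
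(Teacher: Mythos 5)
Your argument is correct, and its core coincides with the paper's: the decisive input in both cases is the classical quotient theorem for finite locally free equivalence relations from \cite[Exp.~V]{sga3}, which you re-prove via invariants, Cayley--Hamilton and descent, while the paper simply cites it (after noting, via \cite[Cor.~4.5.4]{egaII}, that every fiber of $p$ lies in an affine open, which is the hypothesis needed to apply \cite[Exp.~V, Thm.~4.1]{sga3} and conclude that $X$ is an affine scheme, resp.\ a scheme). Where you genuinely diverge is the quasi-affine case: the paper, once $X$ is known to be a scheme, invokes \cite[Cor.~6.6.3]{egaII} (quasi-affineness descends along a finite surjection such as $p$), whereas you construct $R$-saturated affine opens of the form $p^{-1}(X_{N_{X'/X}(f)})$ by taking norms of functions cutting out affine basic opens of the quasi-affine $X'$, obtain an affine cover of $X$ by loci $X_g$ with $g\in\Gamma(X,\sO_X)$, and conclude that $\sO_X$ is ample; this is essentially an unwinding of the proof of the EGA result and has the merit of being self-contained and of producing scheme-ness and quasi-affineness of $X$ in one stroke. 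Two small points you gloss over are standard but worth flagging: passing from ``$X$ is covered by affine opens $X_g$'' to ``$\sO_X$ is ample'' uses quasi-compactness and quasi-separatedness of $X$ together with $\Gamma(X_g,\sO_X)=\Gamma(X,\sO_X)_g$ to see that the affine $X_g$ actually form a basis; and the module-finiteness of $B$ over the ring of invariants $A$ (as opposed to mere integrality) requires the usual decomposition by the rank of $p$, though for the statement of the lemma integrality plus the identifications $R\iso X'\times_{\Spec A}X'$ and faithful flatness of $X'\to\Spec A$ already suffice.
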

\begin{proof}
Every fiber of $p$ has an affine open neighborhood
by~\cite[Cor.~4.5.4]{egaII}. Hence $X$ is an affine scheme (resp.\ a
scheme)~\cite[Exp.~V, Thm.~4.1]{sga3}. If $X'$ is quasi-affine then so is
$X$ by~\cite[Cor.~6.6.3]{egaII}.
\end{proof}

\begin{lemma}\label{L:qf+loc-sep-over-strictly-local}
Let $S=\Spec A$ be strictly local, i.e., let $A$ be a strictly henselian local
ring. Let $X$ be an algebraic space and let $X\to S$ be locally quasi-finite
and \emph{locally separated}. Let $\map{\overline{x}}{\Spec k}{X}$ be a
geometric point over the closed point $s\in S$ and let $\sO_{X,\overline{x}}$
denote the strictly local ring. Then $\map{g}{\Spec(\sO_{X,\overline{x}})}{X}$
is an open immersion and $\Spec(\sO_{X,\overline{x}})\to S$ is finite.
\end{lemma}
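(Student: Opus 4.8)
The plan is to reduce to a scheme \etale{} over $X$, apply the henselian form of Zariski's main theorem there, and then check that the resulting morphism is a monomorphism --- this last point being the only place where the hypothesis that $X\to S$ be locally separated is used.

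First I would choose an affine scheme $V$ with an \etale{} morphism $\map{p}{V}{X}$ whose image contains the image $x\in|X|$ of $\overline{x}$; since $p$ is \etale{} and $k$ is separably closed, $\overline{x}$ lifts to a geometric point $\overline{v}$ of $V$ over a point $v\in V$ lying over the closed point $s$, and then $\sO_{X,\overline{x}}=\sO_{V,\overline{v}}$ by cofinality of \etale{} neighborhoods. The morphism $V\to S$ is separated (as $V$ is affine), of finite type and quasi-finite, so Zariski's main theorem over the henselian local ring $A$ \cite[Thm.~18.5.11]{egaIV} provides an open and closed neighborhood $V_v=\Spec(B)\subseteq V$ of $v$ with $V_v\to S$ finite and $(V_v)_s$ set-theoretically a single point. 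Then $B$ is a finite, hence henselian, local $A$-algebra; since $A$ is strictly henselian, $\kappa(s)$ is separably closed, so $\kappa(v)$ is a purely inseparable --- hence separably closed --- extension of $\kappa(s)$, and therefore $B$ is strictly henselian. Consequently $\sO_{X,\overline{x}}=\sO_{V,\overline{v}}=B$, so $\Spec(\sO_{X,\overline{x}})\iso V_v$ is finite over $S$ and $\map{g}{\Spec(\sO_{X,\overline{x}})}{X}$, being the composite $V_v\inj V\xrightarrow{p}X$, is \etale{}.

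It remains to prove that $g$ is an open immersion; as $g$ is \etale{} it is enough to show that $g$ is a monomorphism, i.e.\ that $\Delta_g\colon V_v\to V_v\times_X V_v$ is an isomorphism \cite[Thm.~17.9.1]{egaIV}. This is where local separatedness enters: $\Delta_{X/S}$ is an immersion, so the base change $V_v\times_X V_v\to V_v\times_S V_v=\Spec(B\otimes_A B)$ is an immersion, and in particular $V_v\times_X V_v$ is a scheme. The algebra $B\otimes_A B$ is finite over $A$, and its closed fibre is $\Spec$ of $(B\otimes_A\kappa(s))\otimes_{\kappa(s)}(B\otimes_A\kappa(s))$, which is local by the pure inseparability above; hence $\Spec(B\otimes_A B)$ is a local scheme. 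A locally closed subscheme of a local scheme containing the closed point is closed, and $V_v\times_X V_v$ contains $\Delta_g(v)$, which maps to the closed point; thus $V_v\times_X V_v$ is a closed subscheme of $\Spec(B\otimes_A B)$. Finally $\Delta_g$ is an open immersion ($g$ is unramified), so its complement $W$ in $V_v\times_X V_v$ is a closed subset of $\Spec(B\otimes_A B)$; since $V_v\times_X V_v\to\Spec(B\otimes_A B)$ is a monomorphism, $\Delta_g(v)$ is the only point lying over the closed point, so $W$ misses the closed point and is therefore empty. Hence $\Delta_g$ is a surjective open immersion, so an isomorphism, and $g$ is an open immersion.

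The routine parts are the reduction to $V$ and the bookkeeping with strict henselizations; the step I expect to demand the most care is the monomorphism argument, where one must justify that $V_v\times_X V_v$ embeds in $V_v\times_S V_v$ as a closed subscheme of a \emph{local} scheme (using local separatedness) and invoke the auxiliary facts that $\Delta_g$ is an open immersion and that an \etale{} monomorphism is an open immersion.
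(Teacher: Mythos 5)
Your proof is correct and follows essentially the same route as the paper: reduce to the scheme case over the henselian base via an affine \etale{} presentation and \cite[Thm.~18.5.11]{egaIV}, obtaining $Z=\Spec(\sO_{X,\overline{x}})$ finite over $S$ and \etale{} over $X$, and then use local separatedness to realize $Z\times_X Z$ as a locally closed, hence closed, subscheme of the local scheme $Z\times_S Z$. The only (cosmetic) difference is the endgame: the paper concludes that $Z\times_X Z$ is finite over $Z$, so that $Z\to g(Z)$ is finite \etale{} of rank $1$ and hence an isomorphism, whereas you show directly that the open immersion $\Delta_g$ is surjective, so that $g$ is an \etale{} monomorphism and hence an open immersion.
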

\begin{proof}
The lemma is well-known for schemes~\cite[Thm.~18.5.11]{egaIV}.
Let $U\to X$ be an \etale{}
presentation with $U$ a scheme. Then $\overline{x}$ lifts to $U$ so that
$Z=\Spec(\sO_{X,\overline{x}})$ is an open subscheme of $U$ and $Z\to S$ is
finite. It follows that $\map{g}{Z}{X}$ is \etale{}. The scheme $Z\times_S Z$ is
local. By assumption, the morphism $Z\times_X Z\to Z\times_S Z$ is an immersion
and as the closed point of $Z\times_S Z$ lies in $Z\times_X Z$ it follows that
$Z\times_X Z\to Z\times_S Z$ is a closed immersion. In particular, $Z\times_X Z$
is finite over $Z$ so that $Z\to g(Z)$ is finite and \etale{}. Since $g$ has
rank $1$ at $x$ it follows that $Z\to g(Z)$
is an isomorphism so that $g$ is an open immersion.
\end{proof}

\end{section}


\begin{section}{$2$-sheaves on the category of algebraic stacks}\label{A:2-sheaves}
In this appendix we define $2$-sheaves on the $2$-category of algebraic stacks
with the \etale{} topology. A $2$-presheaf (resp.\ a $2$-sheaf) is a
generalization of the notion of a fibered category (resp.\ a stack) that
allows the base category (resp.\ site) to be a $2$-category. We have chosen to
describe $2$-presheaves in terms of $2$-functors and not in terms of fibered
$2$-categories as this appears to be the simplest description. Similarly, one
usually describe $1$-sheaves as ordinary functors and not as fibered categories
with fibers equivalent to discrete categories. There are essentially three
different ways to describe the sheaf condition: with sieves, (semi\nobreakdash-)simplicial
objects or classical descent data. Our presentation takes the classical
approach.

The general theory of $2$-sheaves has been developed by R.\ Street in two
papers. The first paper~\cite{street_two-dim-sheaf-theory} treats the, from our
perspective, less interesting case where \emph{all} notions are strict. The
second paper~\cite{street_char-bicat-stacks} briefly treats the non-strict case
(which generalizes fibered categories and stacks) but the proofs have to be
copied and modified from the first paper. We have therefore decided to make the
following presentation independent of these two papers. To further simplify the
discussion, the results are stated for \emph{strict} $2$-presheaves although
the results remain valid for arbitrary $2$-presheaves. This latter notion of
strictness should not be confused with the all-encompassing strictness imposed
in~\cite{street_two-dim-sheaf-theory}. Moreover, every $2$-presheaf is
equivalent (but not isomorphic) to a strict $2$-presheaf by the bicategorical
Yoneda lemma so we do not lose anything by limiting ourselves to the
$2$-category of strict $2$-presheaves.

\begin{definition}
A $2$-category is a category $\catC$ enriched in categories, i.e., for every
pair of objects $(X,Y)$ in $\catC$ we have a category
$\catHom_{\catC}(X,Y)$. The objects (resp.\ arrows) of $\catHom_{\catC}(X,Y)$
are called $1$-morphisms (resp.\ $2$-morphisms). We say that $\catC$ is a
$(2,1)$-category if every $2$-morphism is invertible, i.e., if
$\catHom_{\catC}(X,Y)$ is a groupoid for every $(X,Y)$.
\end{definition}

The standard example of a $2$-category is the $2$-category $\Cat$ of
categories, functors and natural transformations. Similarly, the standard
example of a $(2,1)$-category is the full subcategory $\Grpd\subseteq \Cat$
of groupoids. The other important example of a $(2,1)$-category is
the $(2,1)$-category of algebraic stacks $\Stack$. All these $2$-categories have
$2$-fiber products.

\begin{definition}
A \emph{strict $2$-functor} $\map{F}{\catC}{\catD}$ between $2$-categories
is
\begin{enumerate}
\item a map $\map{F}{\ob \catC}{\ob \catD}$,
\item for every pair of objects $X,Y\in \ob \catC$, a functor
$$\map{F(X,Y)}{\catHom_{\catC}(X,Y)}{\catHom_{\catD}(F(X),F(Y))},$$
\end{enumerate}
such that
\renewcommand{\theenumi}{{\upshape{(\alph{enumi})}}}
\begin{enumerate}
\item $F(\id{X})=\id{F(X)}$ for every $x\in \ob\catC$,\label{DI:s2f-id}
\item for every $X,Y,Z\in \catC$, the diagram
$$\xymatrix{ \catHom_{\catC}(Y,Z)\times \catHom_{\catC}(X,Y)\ar[r]^-{\circ}
\ar[d]_{F(Y,Z)\times F(X,Y)}
 & \catHom_{\catC}(X,Z)\ar[d]^{F(X,Z)}\\
\catHom_{\catD}(FY,FZ)\times \catHom_{\catD}(FX,FY)\ar[r]^-{\circ} & \catHom_{\catD}(FX,FZ)}$$
is strictly commutative.\label{DI:s2f-comp}
\end{enumerate}
\end{definition}

A $2$-functor is defined similarly but instead of requiring that the functor
respects identities and composition as in~\ref{DI:s2f-id}
and~\ref{DI:s2f-comp}, the data for a $2$-functor include
$2$-isomorphisms $\id{F(X)}\Rightarrow F(\id{X})$ and natural isomorphisms
between the two functors $\catHom(Y,Z)\times \catHom(X,Y)\to
\catHom(FX,FZ)$. These isomorphisms are then required to satisfy natural
coherence
conditions. In some literature $2$-functors are called \emph{pseudofunctors}
and strict $2$-functors are simply called $2$-functors.

\begin{definition}
Let $\catC$ be a $2$-category. A (strict) $2$-presheaf on $\catC$ is a (strict)
$2$-functor $\map{\tfF}{\catC^{\op}}{\Cat}$.
\end{definition}

The general definition of topologies and $2$-sheaves on $2$-categories can be
found in~\cite{street_char-bicat-stacks}. In the remainder of
this appendix we will give a concrete description of strict $2$-sheaves on
the $(2,1)$-category $\Stack$ of algebraic stacks with the \etale{} topology.
Fix a strict $2$-presheaf
$\map{\tfF}{\Stack^{\op}}{\Cat}$.

\begin{definition}
A family of morphisms $(\map{p_\alpha}{X_\alpha}{X})$ in $\Stack$ is
\emph{covering in the \etale{} topology} if the $p_\alpha$'s are smooth
and $\coprod_\alpha X_\alpha\to X$ is surjective.
\end{definition}

Note that the $p_\alpha$'s need not be representable.

\begin{xpar}\label{X:X,X',X''}
Let $(\map{p_\alpha}{X_\alpha}{X})_\alpha$ be a family of morphisms in
$\Stack$. Let $X_{\alpha\beta}=X_\alpha\times_X X_\beta$ denote a $2$-fiber product
with projections $\map{\pi_1}{X_{\alpha\beta}}{X_\alpha}$ and
$\map{\pi_2}{X_{\alpha\beta}}{X_\beta}$ and a $2$-isomorphism
$p_\alpha\circ\pi_1\Rightarrow p_\beta\circ\pi_2$. This
induces a natural isomorphism of functors
$$\map{\pi_1^*p_\alpha^* = (p_\alpha\circ \pi_1)^*
\iso (p_\beta\circ \pi_2)^* = \pi_2^*p_\beta^*}
{\tfF(X)}{\tfF(X_{\alpha\beta}).}$$
In particular, if $\map{f}{\sF}{\sG}$ is a morphism in $\tfF(X)$, then we
obtain a commutative diagram
$$\xymatrix{\pi_1^*p_\alpha^*\sF\ar[r]^{\pi_1^*p_\alpha^*f}\ar[d]^{\iso}
& \pi_1^*p_\alpha^*\sG\ar[d]^{\iso}\\
\pi_2^*p_\beta^*\sF\ar[r]^{\pi_2^*p_\beta^*f} & \pi_2^*p_\beta^*\sG.}$$
\end{xpar}

\begin{definition}
A family of morphisms $(\map{p_\alpha}{X_\alpha}{X})_\alpha$ in $\Stack$ is a
\emph{family of descent} for $\tfF$ if for every $\sF,\sG\in \tfF(X)$ the
sequence
$$\xymatrix{\Hom_{\tfF(X)}(\sF,\sG)\ar[r]^-{(p_\alpha^*)}
& \prod_\alpha \Hom_{\tfF(X_\alpha)}(\sF_\alpha,\sG_\alpha)
              \ar@<.5ex>[r]^-{\pi_1^*}\ar@<-.5ex>[r]_-{\pi_2^*}
& \prod_{\alpha\beta} \Hom_{\tfF(X_{\alpha\beta})}(\sF_{\alpha\beta},\sG_{\alpha\beta})}$$
is exact where $\sF_\alpha=p_\alpha^*\sF$,
$\sF_{\alpha\beta}=\pi_1^*p_\alpha^*\sF$ and we implicitly have used the
canonical isomorphism $\pi_1^*p_\alpha^*\sF\to \pi_2^*p_\beta^*\sF$ (and
similarly for $\sG$).
\end{definition}

\begin{xpar}[Cocycle condition]
Let $(\map{p_\alpha}{X_\alpha}{X})_\alpha$ and $\pi_1,\pi_2$ be as
in~\pref{X:X,X',X''}. Given an object $\sF\in \tfF(X)$ we saw that we obtained
a canonical isomorphism
$\isomap{\psi_{\alpha\beta}}{\pi_1^*\sF_\alpha}{\pi_2^*\sF_\beta}$ in
$\tfF(X_{\alpha\beta})$ where $\sF_\alpha=p_\alpha^*\sF$. The isomorphism
$\psi_{\alpha\beta}$ satisfies the
cocycle condition, i.e., the following diagram in $\tfF(X_{\alpha\beta\gamma})$
$$\xymatrix@C=1mm@R=8.6mm{%
& \pi_{12}^{*}\pi_2^{*}\sF_\beta\ar[rrrr]^{\can}_{\iso}
&&&& \pi_{23}^{*}\pi_1^{*}\sF_\beta\ar[dr]^{\pi_{23}^*(\psi_{\beta\gamma})}
& \\
\pi_{12}^{*}\pi_1^{*}\sF_\alpha\ar[ur]^{\pi_{12}^*(\psi_{\alpha\beta})}
&&&\circ&&& \pi_{23}^{*}\pi_2^{*}\sF_\gamma\ar[dl]_{\can}^{\iso} \\
& \pi_{31}^{*}\pi_2^{*}\sF_\alpha\ar[ul]_{\can}^{\iso}
&&&& \pi_{31}^{*}\pi_1^{*}\sF_\gamma\ar[llll]_{\pi_{31}^*(\psi_{\gamma\alpha})} &
}$$
commutes. Here $X_{\alpha\beta\gamma}=X_\alpha\times_X X_\beta\times_X
X_\gamma$ is a $2$-fiber product with projections
$\map{\pi_{ij}}{X_{\alpha_1\alpha_2\alpha_3}}{X_{\alpha_i\alpha_j}}$
onto the $i$\textsuperscript{th} and $j$\textsuperscript{th} factors
and the maps denoted with ``can'' are canonical isomorphisms.
\end{xpar}

\begin{definition}
Let $(\sF_\alpha)_\alpha\in \prod_\alpha\tfF(X_\alpha)$. A \emph{descent datum}
for $(\sF_\alpha)_\alpha$ is a collection of isomorphisms
$\map{\psi_{\alpha\beta}}{\pi_1^*\sF_\alpha}{\pi_2^*\sF_\beta}$ in
$\tfF(X_{\alpha\beta})$ satisfying the cocycle condition.
\end{definition}

\begin{definition}
We let $\tfF\bigl((p_\alpha)_\alpha\bigr)=\tfF\bigl((X_\alpha\to
X)_\alpha\bigr)$ be the category with
\begin{itemize}
\item objects: pairs $\bigl((\sF_\alpha),(\psi_{\alpha\beta})\bigr)$ of an
object $(\sF_\alpha) \in \prod_\alpha \tfF(X_\alpha)$ equipped with a descent
datum $(\psi_{\alpha\beta})$.
\item morphisms $\bigl((\sF_\alpha),(\psi_{\alpha\beta})\bigr)\to
\bigl((\sG_\alpha),(\theta_{\alpha\beta})\bigr)$: a morphism
$\map{(f_\alpha)}{(\sF_\alpha)}{(\sG_\alpha)}$ in $\prod_\alpha \tfF(X_\alpha)$
such that
$$\xymatrix{\pi_1^*\sF_\alpha\ar[r]^{\pi_1^*f_\alpha}\ar[d]^{\psi_{\alpha\beta}}
& \pi_1^*\sG_\alpha\ar[d]^{\theta_{\alpha\beta}}\\
\pi_2^*\sF_\beta\ar[r]^{\pi_2^*f_\beta} & \pi_2^*\sG_\beta}$$
commutes for all pairs $(\alpha,\beta)$.
\end{itemize}
\end{definition}

There is a natural functor $\map{(p_\alpha)^{*}_{D}}{\tfF(X)}{\tfF\bigl((p_\alpha)_\alpha\bigr)}$
taking an object $\sF\in \tfF(X)$ onto $(p_\alpha^*\sF)$ equipped with the
induced descent datum and taking a morphism $\sF\to \sG$ onto
$(p_\alpha^*\sF\to p_\alpha^*\sG)$. The functor $(p_\alpha)^{*}_{D}$ is fully
faithful if and only if $(p_\alpha)$ is a family of descent for $\tfF$.

\begin{definition}
A family of descent $(p_\alpha)$ is a family of \emph{effective descent} for
$\tfF$ if $\map{(p_\alpha)^{*}_D}{\tfF(X)}{\tfF\bigl((p_\alpha)\bigr)}$ is an
equivalence of categories.
\end{definition}

\begin{definition}
A $2$-presheaf $\map{\tfF}{\Stack^{\op}}{\Cat}$ is a \emph{$2$-sheaf in the
\etale{}
topology} if every covering family $(X_\alpha\to X)_\alpha$ in the \etale{}
topology
is of effective descent for $\tfF$.
\end{definition}

\begin{definition}
We let $\Stack'$ denote the subcategory of $\Stack$ with all objects but
with only smooth $1$-morphisms. For an algebraic stack
$X$, we let $\Stack_{/X}$ denote the $2$-category of morphisms $Z\to X$. We let
$\Stack_{\metale/X}\subset \Stack_{\smooth/X}\subset \Stack_{/X}$ denote the
full $2$-subcategories of \etale{} and smooth morphisms and we let
$\Stack_{\repr/X}\subset \Stack_{/X}$ denote the full $1$-category of
representable morphisms. We say that a family of morphisms in $\Stack_{/X}$ is
covering if its image in $\Stack$ is covering. We say that a $2$-presheaf on
any of these categories is a $2$-sheaf if every covering family is of effective
descent.
\end{definition}

If $\tfF$ is a $2$-sheaf on $\Stack$ (resp.\ $\Stack'$) then the restricted
$2$-presheaf on $\Stack_{/X}$ (resp.\ $\Stack_{\smooth/X}$) is a $2$-sheaf for
any stack $X$. In particular, the restriction to $\Stack_{\metale/X}$ is a
$2$-sheaf.

By the
comparison lemma for $2$-sheaves~\cite[Thm.~3.8]{street_two-dim-sheaf-theory}
restriction along $\Stack_{\repr/X}\subset \Stack_{/X}$
induces a $2$-equivalence between the $2$-category of $2$-sheaves on
$\Stack_{/X}$ and the $2$-category of $2$-sheaves on $\Stack_{\repr/X}$,
or equivalently, the $2$-category of stacks on $\Stack_{\repr/X}$. The
following result
is essentially a reformulation of the comparison lemma (also
see~\cite[Prop.~10.10]{giraud_descente}).

\begin{proposition}\label{P:descent-representable}
Let $\map{\tfF}{\Stack^{\op}}{\Cat}$ be a strict $2$-presheaf. Then $\tfF$ is
a $2$-sheaf if and only if the following two conditions hold.
\begin{enumerate}
\item\label{PI:descent-repr:pres}
For every algebraic stack $X$ and every surjective smooth morphism
$\map{p}{U}{X}$ such that $U$ is an \emph{algebraic space}, we have that
$p$ is of effective descent.
\item\label{PI:descent-repr:coprod}
For every family of algebraic spaces $(X_\alpha)$ the natural functor
$$\tfF\left(\coprod_\alpha X_\alpha\right)\to \prod_\alpha \tfF(X_\alpha)$$
is an equivalence of categories.
\end{enumerate}
\end{proposition}
\begin{proof}
The functor in \ref{PI:descent-repr:coprod} is an equivalence if and only if
for every algebraic space
$X=\coprod X_\alpha$, the family $(\injmap{j_\alpha}{X_\alpha}{X})$ is of
effective descent, cf.\ \cite[Prop.~9.24]{giraud_descente}. Thus, the two
conditions are necessary. Moreover, if \ref{PI:descent-repr:coprod} holds, then
for every covering
family $(\map{p_\alpha}{X_\alpha}{X})$ such that the $X_\alpha$'s are algebraic
spaces, the natural functor
$$\tfF\left(\map{\coprod_\alpha p_\alpha}{\coprod_\alpha X_\alpha}{X}\right)\to \tfF\bigl((\map{p_\alpha}{X_\alpha}{X})\bigr)$$
is an equivalence.

To show that the conditions are sufficient, assume that
\ref{PI:descent-repr:pres} and~\ref{PI:descent-repr:coprod} hold and
let $(\map{p_\alpha}{X_\alpha}{X})$ be a covering family. For every $\alpha$
choose a smooth presentation $\map{q_\alpha}{U_\alpha}{X_\alpha}$ so that we
obtain $2$-commutative diagrams
%
$$\xymatrix{%
U_{\alpha\beta}:=U_\alpha\times_X U_\beta\ar[d]^{q_{\alpha\beta}}\ar[r]^-{\pi_1}
 & U_\alpha\ar[d]^{q_{\alpha}}\\
X_{\alpha\beta}:=X_\alpha\times_X X_\beta\ar[r]^-{\pi_1} & X_\alpha\ar[r]^{p_\alpha} & X}$$
%
and similarly for $\pi_2$. With the usual choice of the $2$-fiber product
$X_{\alpha\beta}$, we can even assume that the diagram is strictly
commutative.
There is a natural functor
$$\map{Q}{\tfF\bigl((p_\alpha)_\alpha\bigr)}{\tfF\bigl((p_\alpha\circ q_\alpha)_\alpha\bigr)}$$
taking an object $\bigl((\sF_\alpha),(\psi_{\alpha\beta})\bigr)$ to
$\bigl((q_\alpha^*\sF_\alpha),(q_{\alpha\beta}^*\psi_{\alpha\beta})\bigr)$
so that ${(p_\alpha\circ q_\alpha)^*_D}=Q\circ (p_\alpha)^*_D$.

Since $q_\alpha$ and $q_{\alpha\beta}$ are of descent it follows that $Q$ is
fully faithful. Indeed, that $Q$ is faithful is immediate from the faithfulness
of $q_\alpha^*$. To see that $Q$ is full, let
$\bigl((\sF_\alpha),(\psi_{\alpha\beta})\bigr)$ and
$\bigl((\sG_\alpha),(\theta_{\alpha\beta})\bigr)$ be objects of
$\tfF(p_\alpha)$ and let
$\map{(g_\alpha)}{(q_\alpha^*\sF_\alpha)}{(q_\alpha^*\sG_\alpha)}$ be a
morphism in $\tfF(p_\alpha\circ q_\alpha)$. Since $(q_\alpha)^*_D$ is full
$g_\alpha$ descends to a map $\map{(f_\alpha)}{(\sF_\alpha)}{(\sG_\alpha)}$.
That this map is compatible with the descent data $\psi$ and $\theta$ follows
from the faithfulness of $q_{\alpha\beta}^*$.

As $(p_\alpha\circ q_\alpha)$ is of effective descent, it follows that
$Q$ is essentially surjective and hence an equivalence of categories. It
follows that $(p_\alpha)$ is a family of effective descent and that
$\tfF$ is a $2$-sheaf.
\end{proof}

More generally, the proposition holds for non-strict $2$-presheaves. In this
case, we only have a natural isomorphism
$(p_\alpha\circ q_\alpha)^*_D\iso Q\circ (p_\alpha)^*_D$ in the proof.

\end{section}


\begin{section}{Examples of $2$-sheaves}\label{A:ex-2-sheaves}
In this appendix we show that the $2$-presheaf $\catHom(-,Y)$ of morphisms to a
given stack $Y$ and the $2$-presheaf $\tfQCoh(-)$ of quasi-coherent sheaves are
$2$-sheaves. For simplicity, we only treat the restriction of $\tfQCoh$ to the
full subcategory $\Stack'\subset \Stack$ so that $\tfQCoh$ is a
\emph{strict} $2$-sheaf.

Let $Y$ be an algebraic stack. There is a strict $2$-functor
$$\map{\catHom(-,Y)}{\Stack^{\op}}{\Grpd}$$
that takes an algebraic stack $X$ to the groupoid $\catHom(X,Y)$.

\begin{theorem}\label{T:Hom-is-2-sheaf}
The strict $2$-presheaf $\map{\catHom(-,Y)}{\Stack^{\op}}{\Grpd}$ is a
$2$-sheaf.
\end{theorem}
\begin{proof}
Let us verify the conditions of
Proposition~\pref{P:descent-representable}. Part~\ref{PI:descent-repr:pres} is
the description of
$\catHom(X,Y)$ given in~\cite[pf.\ of Prop.~4.18]{laumon} and
part~\ref{PI:descent-repr:coprod} is the
definition of the coproduct in the $2$-category of stacks.
\end{proof}

There is a strict $2$-functor
$$\map{\Mod_{\cart}(\sO_{-})}{\Stack'^{\op}}{\Cat}$$
taking an algebraic stack $X$ to the category $\Mod_{\cart}(\sO_X)$ of
cartesian lisse-\etale{} $\sO_X$-modules. For a
smooth morphism $\map{f}{X'}{X}$ the pull-back
$\map{f^*}{\Mod_{\cart}(\sO_X)}{\Mod_{\cart}(\sO_{X'})}$ is defined by
restriction along the functor $\tLiset(X')\to \tLiset(X)$. This defines $f^*$
uniquely (not merely up to isomorphism) so that $\Mod_{\cart}(\sO_{-})$ is
indeed a strict $2$-presheaf. We let
$$\map{\tfQCoh(-)}{\Stack'^{\op}}{\Cat}$$
be the sub-$2$-presheaf of quasi-coherent modules.

\begin{theorem}
The strict $2$-presheaf $\map{\tfQCoh(-)}{\Stack'^{\op}}{\Cat}$ is a $2$-sheaf.
\end{theorem}
\begin{proof}
We will verify the conditions of Proposition~\pref{P:descent-representable} for
$\tfQCoh$. Condition~\ref{PI:descent-repr:pres} for the $2$-presheaf
$\Mod_{\cart}(\sO_{-})$
is~\cite[Lem.~4.5]{olsson_sheaves-on-artin-stacks}
and that condition~\ref{PI:descent-repr:coprod} holds is obvious.

It remains to verify that if $X$ is an algebraic stack, if $\sF$ is a cartesian
$\sO_X$-module, and if $\map{p}{U}{X}$ is a smooth presentation, then $\sF$ is
a quasi-coherent $\sO_X$-module if and only if $p^{*}\sF$ is a quasi-coherent
$\sO_U$-module. This is~\cite[Prop.~13.2.1]{laumon}.
\end{proof}

It is also not difficult to show that the $2$-presheaves of:
big-\etale{} sheaves, lisse-\etale{} sheaves,
cartesian lisse-\etale{} sheaves, constructible sheaves,
$\sA$-modules, cartesian $\sA$-modules; are all $2$-sheaves. Here $\sA$ denotes
a flat lisse-\etale{} sheaf of rings. Another important $2$-sheaf is the
$2$-presheaf $\tfStackRep$ that takes a stack $X$ to the $1$-category
$\Stack_{\repr/X}$
of representable morphisms $Z\to X$. This is a subsheaf of the $2$-sheaf of
big-\etale{} sheaves. That $\tfStackRep$ is a $2$-sheaf follows
from~\cite[Cor.~10.5]{laumon}.
\end{section}


\bibliography{etnbhds}
\bibliographystyle{dary}

\end{document}